\theoremstyle{definition}
\newtheoremstyle{named}{}{}{\itshape}{}{\bfseries}{.}{.5em}{\thmnote{#3}}
\theoremstyle{named}
\newcommand{\R}{{\mathord{\mathbb R}}}
\newcommand{\Rd}{{\mathord{\mathbb R}^d}}
\newcommand{\id}{{\mathop{\rm \mathbf{id} }}}
\newcommand{\grad}{\nabla}
\newcommand{\la}{\langle}
\newcommand{\ra}{\rangle}
\def\P{{\mathcal P}}
\newcommand{\argmin}{\operatornamewithlimits{argmin}}
\newcommand{\bxi}{\boldsymbol{\xi}}
\newcommand{\bt}{\mathbf{t}}
\newcommand{\bmu}{\boldsymbol{\mu}}
\newcommand{\bh}{\boldsymbol{h}}
\newcommand{\bomega}{\boldsymbol{\omega}}
\newcommand{\bgamma}{\boldsymbol{\gamma}}
\newcommand{\subjclassname@MSC}{AMS Subject Classification}
\begin{document}

\title{The Exponential Formula for the Wasserstein Metric}
\thanks{Work partially supported by U.S.
National Science Foundation grants DMS 0901632 and DMS 1201354.}
\author{Katy Craig} \address{Department of Mathematics, Rutgers University, 110 Frelinghuysen Road, Piscataway, NJ 08854-8019, kcraig@math.ucla.edu}

\begin{abstract}
A recurring obstacle in the study of Wasserstein gradient flow is the lack of convexity of the square Wasserstein metric. In this paper, we develop a class of \emph{transport metrics} that have better convexity properties and use these metrics to prove an Euler-Lagrange equation characterizing Wasserstein discrete gradient flow. We then apply these results to give a new proof of the exponential formula for the Wasserstein metric, mirroring Crandall and Liggett's proof of the corresponding Banach space result \cite{CrandallLiggett}. We conclude by using our approach to give simple proofs of properties of the gradient flow, including the contracting semigroup property and energy dissipation inequality.
\end{abstract}

\subjclass[AMS]{47J, 49K, 49J}
\keywords{Wasserstein metric, gradient flow, exponential formula}
\maketitle

\section*{Introduction} \label{section 2}

Given a continuously differentiable, convex function $E: \Rd \to \R \cup \{ +\infty \}$, the \emph{gradient flow} of $E$ is the solution to the Cauchy problem
\begin{align}
\frac{d }{dt} u(t)= - \grad E(u(t)),\quad u(0) \in \overline{D(E)}=\overline{\{v \in \Rd : E(v) < +\infty \}} \ . \label{Euclidgradflow}
\end{align}
Through suitable generalizations of the notion of the gradient, the theory of gradient flow has been extended to Hilbert spaces \cite{Brezis2}, Banach spaces \cite{Crandall, CrandallLiggett}, nonpositively curved metric spaces \cite{Mayer}, and general metric spaces \cite{AGS, ClementDesch}, including the space of probability measures endowed with the Wasserstein metric $W_2$.
 
Gradient flow in the Wasserstein metric is of particular interest due to the sharp estimates it can provide on long-time behavior of solutions to partial differential equations \cite{O,O2} and the low regularity it requires, allowing one to pass seamlessly between Lagrangian and Eulerian perspectives \cite{5person, 5person2}. 
For a sufficiently regular functional $E$ and measure $\mu$, the formal Wasserstein gradient is given by $\grad_{W_2} E(\mu) = - \grad \cdot (\mu \grad \frac{\delta E}{\delta \mu})$ and the gradient flow of $E$ corresponds to the partial differential equation 
\begin{align} \label{formal W2 grad flow}
 \frac{d}{d t} \mu(t) =  - \grad_{W_2} E(\mu)= \grad \cdot \left(\mu \grad \frac{\delta E}{\delta \mu} \right) \ ,
 \end{align}
in the duality with $C^\infty_c(\Rd \times (0,+\infty))$  \cite[Lemma 10.4.1, Theorem 11.1.4]{AGS}.

Due to the formal nature of the Wasserstein gradient, rigorous study of Wasserstein gradient flow often considers a time discretization of the problem that doesn't require a rigorous notion of gradient.
For Euclidean gradient flow (\ref{Euclidgradflow}), this discretization is simply the implicit Euler method, and it is a classical result that $u_n$ is a finite difference approximation to the gradient flow if and only if $u_n$ solves a sequence of minimization problems,
\begin{align} \label{implicit Euler}
(u_n - u_{n-1})/\tau = - \grad E(u_n) \ , u_0 = u  \quad \iff \quad u_n =\argmin_v \{ |v-u_{n-1}|^2/2\tau + E(v) \} \ , u_0 = u \ . 
\end{align}
Motivated by this, the Wasserstein \emph{proximal map} $J_\tau$ is defined by
\[ J_\tau \mu =\argmin_{\nu} \left\{W_2^2(\nu,\mu)/2\tau + E(\nu) \right\} \ , \]
and the \emph{discrete gradient flow} is given by $J^n_\tau \mu$. Taking $\tau = \frac{t}{n}$ and sending $n \to \infty$ gives the \emph{exponential formula} relating the discrete gradient flow to the gradient flow,
\begin{align*}
\lim_{n \to \infty}J_{t/n}^n \mu = \mu(t) \ .
\end{align*}
Ambrosio, Gigli, and Savar\'e were the first to prove the exponential formula for the Wasserstein metric, obtaining the sharp rate of convergence of the discrete gradient flow $J_{t/n}^n \mu$ to the gradient flow $\mu(t) $ \cite[Theorem 4.0.4]{AGS}. In the same work, they  raised the question of whether another proof might be possible, mirroring Crandall and Liggett's approach for the Banach space case\cite{CrandallLiggett}.

A recurring obstacle in the theory of Wasserstein gradient flow is that, while $x \mapsto \frac{1}{2} |x- y |^2$
is 1-convex along geodesics in Euclidean space, the square Wasserstein metric $\mu \mapsto \frac{1}{2}W_2^2(\mu, \omega)$ 
is not convex along geodesics \cite[Example 9.1.5]{AGS}. In fact, it is semi-concave \cite[Theorem 7.3.2]{AGS}. Convexity of the square Euclidean distance ensures that $v \mapsto |v-u_{n-1}|^2/2\tau + E(v)$ is convex, so the minimization problem in (\ref{implicit Euler}) is well-posed. This argument fails in the Wasserstein case. Convexity of the square distance is also essential to concluding that the proximal map satisfies the contraction inequality $ \|J_\tau u - J_\tau v\| \leq \|u -v\|$, 
a key element in Crandall and Liggett's proof of the exponential formula  \cite{Mayer, CrandallLiggett}. It is unknown if such a contraction holds in the Wasserstein case, though ``almost'' contraction inequalities exist, such as Ambrosio, Gigli, and Savar\'e's  \cite[Lemma 4.2.4]{AGS} and Carlen and the author's \cite[Theorem 1.3]{CC}.

In order to circumvent these difficulties, Ambrosio, Gigli, and Savar\'e introduced a new class of curves---\emph{generalized geodesics}---along which the square distance is 1-convex \cite[Lemma 9.2.1]{AGS}. They also introduced \emph{pseudo-Wasserstein metrics}, which they used to study the semi-concavity and differentiability of the square Wasserstein metric \cite[Equation 7.3.2]{AGS}. In this paper, we further develop these ideas, introducing a class of \emph{transport metrics}, which are a type of pseudo-Wasserstein metric. We show that generalized geodesics, while not geodesics with respect to the Wasserstein metric, are  geodesics with respect to the transport metrics. The transport metrics also satisfy the key property that the square distance is 1-convex. We use the transport metrics to prove an Euler-Lagrange equation characterizing the discrete gradient flow, analogous to equation (\ref{implicit Euler}) above. (One direction of this characterization is due to Ambrosio, Gigli, and Savar\'e \cite{AGS}. We prove the converse.) We then apply the transport metrics and Euler-Lagrange equation to give a new proof of the exponential formula, in the style of Crandall and Liggett. We conclude by applying our estimates to give simple proofs of properties of the gradient flow, including the contracting semigroup property and energy dissipation inequality.

We are optimistic that our results will be useful in future study of Wasserstein gradient flow. Our Euler-Lagrange equation replaces the global minimization problem in the definition of the proximal map with a local computation of the subdifferential. Our results on the structure of transport metrics give further credence to the geometric naturalness of the assumption of convexity along generalized geodesics, a key element in the work of Ambrosio, Gigli, and Savar\'e. Our new proof of the exponential formula brings together the theories of Banach space and Wasserstein gradient flow, and we believe our work may be useful in studying the behavior of the gradient flow as the functional $E$ varies---for example, as a regularization of $E$ is removed. The corresponding problem in the Banach space case is well-understood \cite{BrezisPazy}, and the analogy we establish between the Banach and Wasserstein cases may help extend these results.

\section{Transport Metrics and the Euler-Lagrange Equation} \label{first section}
In sections \ref{Wasserstein Metric subsection}-\ref{discrete grad flow section} we recall foundational results on Wasserstein gradient flow, including Ambrosio, Gigli, and Savar\'e's notion of generalized geodesics and pseudo-Wasserstein metrics. In section \ref{transport metric section} we build on this theory, developing the structure of transport metrics, which have better convexity properties than the Wasserstein metric. In section \ref{el section}, we use the convexity of the square transport metrics to prove an Euler-Lagrange equation characterizing the discrete gradient flow.

\subsection{Wasserstein Metric} \label{Wasserstein Metric subsection}
We begin by recalling properties of the Wasserstein metric. Let $\P(\Rd)$ denote the set of probability measures on $\Rd$. Given $\mu, \nu \in \P(\Rd)$, a measurable function $\bt: \Rd \to \Rd$ \emph{transports $\mu$ onto $\nu$} if $\nu(B) = \mu(\bt^{-1}(B))$ for all measureable $B \subseteq \Rd$. We call $\nu$ the \emph{push-forward of $\mu$ under $\bt$} and write $\nu = \bt \# \mu$.

For a finite product $\Pi_{i=1}^n \Rd$, let $\pi^i$ the be projection onto the $i$th component and $\pi^{i,j}$ be the projection onto the $i$th and $j$th components. 
Given $\mu, \nu \in P(\Rd)$, the set of \emph{transport plans} from $\mu$ to $\nu$ is 
\[ \Gamma(\mu, \nu)= {\{ \bmu \in \P(\Rd \times \Rd) : {\pi^1 \# \bmu = \mu}, {\pi^2 \# \bmu = \nu} \}} \ . \]
The \emph{Wasserstein distance} between $\mu$ and $\nu$ is defined to be
\begin{align}
 W_2(\mu,\nu) =  \inf \left\{  \left(\int_{\Rd \times \Rd} |x_1-x_2|^{2}d \bmu(x_1,x_2)  \right)^{1/2}:  \bmu \in \Gamma(\mu, \nu) \right\}. \label{W2}
 \end{align}
If $W_2(\mu,\nu)< +\infty$, the infimum is attained, and we denote the set of \emph{optimal transport plans} by $\Gamma_0(\mu, \nu)$.

If $\mu$ does not charge sets of $d-1$ dimensional Hausdorff measure, we say that $\mu$ \emph{does not charge small sets}. In this case, there is a unique optimal transport plan from $\mu$ to $\nu$ of the form $(\id \times \bt) \# \mu$, where $\bt$ is unique $\mu$-a.e. and $\id(x) = x$ is the identity transformation\cite{McCannExistence}. In particular, there is an \emph{optimal transport map} $\bt = \bt_\mu^\nu$ satisfying $\bt \# \mu = \nu$ and $W_2(\mu,\nu) = \| \id - \bt \|_{L^2(\mu)}$. (See Gigli \cite{Gigli} for a sharp version of this result.) Furthermore, a measurable map satisfying $\bt \# \mu = \nu$ is optimal if and only if it is cyclically monotone $\mu$-a.e. \cite{McCannExistence}.
If $\nu$ also does not charge small sets, then $\bt_\mu^\nu \circ \bt_\nu^\mu = \id$ almost everywhere with respect to $\mu$.

One technical difficulty when working with the Wasserstein distance on $\P(\Rd)$ is that there exist measures that are infinite distances apart. 
 Given a fixed reference measure $\omega_0$, define 
\[ \P_{2, \omega_0}(\Rd) = \{ \mu \in \P(\Rd) : W_2(\mu,\omega_0)< +\infty \} \ .\]
By the triangle inequality, $(\P_{2, \omega_0}(\Rd), W_2)$ is a metric space. When $\omega_0 = \delta_0$, the Dirac mass at the origin, $\P_{2, \omega_0}(\Rd)=\P_2(\Rd)$, the subset of $\P(\Rd)$ with finite second moment.

We consider three classes of curves: locally absolutely continuous curves, (constant speed) geodesics, and generalized geodesics. We define the first two now and leave the third for the next section.
\begin{defi} \label{abs cts def}
$\mu(t): \R \to \P_{2, \omega_0}(\Rd)$ is \emph{locally absolutely continuous} if for all $I \subseteq \R$ bounded, there exists $m \in L^1(I)$ so that $W_2(\mu(t),\mu(s)) \leq \int_s^t m(r) dr$ for all $s \leq t \in I$.
\end{defi}
\begin{defi} \label{geodef}
Given a metric space $(X,d)$, $u_\alpha: [0,1] \to X$ is a (constant speed) \emph{geodesic} in case $d(u_\alpha,u_\beta) = |\beta-\alpha| d(u_0,u_1)$ for all $\alpha, \beta \in [0,1]$.
\end{defi}
As any two measures are connected by a geodesic,  $(\P_{2, \omega_0}(\Rd), W_2)$ is a geodesic metric space, and all geodesics are of the form $\mu_\alpha = \left( (1-\alpha)\pi^1 + \alpha \pi^2 \right) \# \bmu$ for $\bmu \in \Gamma_0(\mu_0,\mu_1)$ \cite[Theorem 7.2.2]{AGS}.
If $\mu_0$ does not charge small sets, the geodesic from $\mu_0$ to $\mu_1 $ is unique and given by $\mu_\alpha = \left( (1-\alpha)\id + \alpha \mathbf{t}_{\mu_0}^{\mu_1} \right) \# \mu_0$.

Given a metric space $(X, d)$, we place the following conditions on functionals $E: X \to \R \cup \{+\infty\}$:

\begin{itemize}
\item \textit{proper:} $ D(E) = \{u \in X : E(u) < +\infty \} \neq \emptyset$

\item \textit{coercive}: 
There exists $\tau_0 > 0$, $u_0 \in X$ so that $
 \inf \left\{\frac{1}{2 \tau_0} d^2(u_0,v) + E(v) : v \in X \right\}> - \infty.$

\item \textit{lower semicontinuous:} For all $u_n, u \in X$ so that $u_n \to u$, $\liminf_{n \to \infty} E(u_n) \geq E(u)$.

\item \textit{$\lambda$-convex along a curve $u_\alpha$:} Given $\lambda \in \R$ and a curve $u_\alpha \in X$,
\begin{align}
E(u_\alpha) \leq (1-\alpha)E(u_0) + \alpha E(u_1) -  \alpha (1- \alpha)\frac{\lambda}{2}d(u_0,u_1)^2 \ . \label{convexdef}
\end{align}

\item \textit{$\lambda$-convex along geodesics}: Given $\lambda \in \R$, for all $u_0, u_1 \in X$, there exists a geodesic $u_\alpha$ from $u_0$ and $u_1$ along which (\ref{convexdef}) holds. We will often simply say that $E$ is \emph{$\lambda$-convex}, or in the case $\lambda=0$, \emph{convex}.
\end{itemize}

If $(X, d) = (\P_{2, \omega_0}(\Rd), W_2)$, we consider the following notions of differentiability.
\begin{defi} \label{metric slope def}
Given $E: \P_{2, \omega_0}(\Rd) \to \R \cup \{+\infty\}$,  the \emph{metric slope} of $E$ at $u \in D(E)$ is
\[ |\partial E|(u) = \limsup_{v \to u} \frac{(E(u) - E(v))^+}{d(u,v)} \ . \]
\end{defi}
For ease of notation, we suppose $E$ satisfies the following assumption, which ensures that for all $\mu \in D(E)$ and $\nu \in \P(\Rd)$ there exists an optimal transport map $\bt_\mu^\nu$ from $\mu$ to $\nu$. 
\begin{as}\label{small set assumption}
If $\mu \in D(E)$, $\mu$ does not give mass to small sets.
\end{as}
\begin{defi} \label{Wsubdiff}
Suppose $E: \P_{2, \omega_0}(\Rd) \to \R \cup \{+\infty\}$ satisfies domain assumption \ref{small set assumption} and is proper, coercive, lower semicontinuous, and $\lambda$-convex along geodesics. Then $\bxi \in L^2(\mu)$ belongs to the \emph{subdifferential} of $E$ at $\mu$ in case for all $\nu \in D(E)$,
\[ E(\nu) - E(\mu) \geq \int_\Rd \la \bxi , \bt_\mu^\nu - \id \ra d \mu + \frac{\lambda}{2}W_2(\mu,\nu) \ . \]
We denote this by $\bxi \in \partial E (\mu)$. In addition, $\bxi$ is a \emph{strong subdifferential} if for all measurable $\bt$ it satisfies
\[ E(\mathbf{t} \# \mu) - E(\mu) \geq \int_{\Rd} \la \bxi , \mathbf{t}-\id
\ra d \mu + o(\|\mathbf{t} - \id\|_{L^2(\mu)})  \ \quad \text{ as } t \xrightarrow{L^2(\mu)} \id \ . \]
\end{defi}

\subsection{Generalized Geodesics}
A recurring difficulty in extending results from a Hilbert space $(\mathcal{H}, \|\cdot \|)$ to the Wasserstein metric $(\P_{2, \omega_0}, W_2)$ is that while $y \mapsto \frac{1}{2} \|x- y \|^2$ is 1-convex along geodesics, $\mu \mapsto \frac{1}{2}W_2^2(\omega, \mu)$
is not \cite[Example 9.1.5]{AGS}.
Ambrosio, Gigli, and Savar\'e circumvent this difficulty by introducing generalized geodesics \cite[Definition 9.2.2]{AGS}.

\begin{defi}
Given $\mu_0, \mu_1, \omega \in \P_{2, \omega_0}(\Rd)$, a measure $\bomega \in \P(\Rd \times \Rd \times \Rd)$ is a \emph{transport plan from $\mu_0$ to $\mu_1$ with base $\omega$} in case $\pi^{1,2} \# \bomega \in \Gamma_0(\omega,\mu_0) \text{ and }\pi^{1,3} \# \bomega \in \Gamma_0(\omega,\mu_1)$.
\end{defi}

\begin{defi} \label{gen geo def}
Given $\mu_0, \mu_1, \omega \in \P_{2, \omega_0}(\Rd)$ and $\bomega \in \P(\Rd \times \Rd \times \Rd)$ a transport plan from $\mu_0$ to $\mu_1$ with base $\omega$, a \emph{generalized geodesic from $\mu_0$ to $\mu_1$ with base $\omega$} is the curve 
\[\mu_\alpha = \left( (1-\alpha) \pi^2 + \alpha \pi^3 \right) \# \bomega  \ , \quad \alpha \in [0,1]. \]
\end{defi}
For any three measures, a transport plan from $\mu_0$ to $\mu_1$ with base $\omega$ always exists always exists \cite[Lemma 5.3.2]{AGS}, hence generalized geodesics always exist. If $\omega$ is absolutely continuous with respect to Lebesgue measure, the generalized geodesic is unique and defined by $\mu_\alpha= \left( (1-\alpha) \mathbf{t}_{\omega}^{\mu_0} + \alpha \mathbf{t}_{\omega}^{\mu_1} \right) \# \omega$. Typically, a generalized geodesic is \emph{not} a geodesic. However, it is if the base $\omega$ coincides with either $\mu_0$ or $\mu_1$.

In addition to the notion of generalized geodesics, Ambrosio, Gigli, and Savar\'e introduced the related notion of pseudo-Wasserstein metrics \cite[Equation 9.2.5]{AGS}.
\begin{defi}\label{pseudo metric def}
Given a transport plan $\bomega$ from $\mu_0$ to $\mu_1$ with base $\omega$, the \emph{pseudo-Wasserstein metric }is 
\[ W_{2,\bomega}(\mu_0,\mu_1) = \left(\int_{\Rd \times \Rd} |x_2-x_3|^{2}d \bomega(x_1,x_2,x_3)  \right)^{1/2} \ . \]
\end{defi}
\begin{remark} \label{transport metric bounds W2}
If $\omega = \mu_0$ or $\mu_1$, this reduces to the Wasserstein metric. In general, ${W_{2,\bomega}(\mu_0,\mu_1) \geq W_2(\mu_0,\mu_1)}$. We also have $W_{2,\bomega}(\mu_0,\mu_1) \leq W_2(\mu_0,\omega) + W_2(\omega, \mu_1)$ by the triangle inequality for $L^2(\bomega)$.
\end{remark}

Let $\bomega$ be a transport plan from $\mu_0$ to $\mu_1$ with base $\omega$. If $\mu_\alpha$ is the generalized geodesic induced by $\bomega$ and $W_{2,\bomega}$ is the corresponding pseudo-Wasserstein metric, Ambrosio, Gigli, sand Savar\'e showed
\begin{align} \label{pseudo metric Hilbertian identity}
W_2^2(\omega, \mu_\alpha) = (1-\alpha) W_2^2(\omega,\mu_0) + \alpha
W_2^2(\omega,\mu_1) - \alpha(1-\alpha)W_{2,\bomega}^2(\mu_0,\mu_1) \ , \quad \forall \alpha \in [0,1] \ .
\end{align}
In particular, while $\mu \mapsto \frac{1}{2}W_2^2( \omega,\mu)$ is not convex along geodesics, it is 1-convex along \emph{generalized geodesics} with base $\omega$ \cite[Lemma 9.2.1]{AGS}. This convexity is a key element in their study of discrete gradient flow.

\subsection{Gradient Flow and Discrete Gradient Flow} \label{discrete grad flow section}
For $E: \P_{2, \omega_0}(\Rd) \to \R \cup \{ +\infty \}$ and $\tau >0$, the \emph{quadratic perturbation} $\Phi$ and \emph{proximal map} $J_\tau$ are defined by 
\begin{align} \label{quadper}
\Phi(\tau, \mu; \nu)= \frac{1}{2\tau} W_2^2(\mu,\nu) + E(\nu) \ , \quad J_\tau(\mu)= \argmin_{\nu \in \P_{2, \omega_0}(\Rd)}  \Phi(\tau, \mu;\nu) \ . 
\end{align}
Let $J_0(\mu) = \mu$. The \emph{discrete gradient flow} sequence of $E$ is constructed via repeated applications of $J_\tau$,
\[ \mu_n = J_\tau (\mu_{n-1}) = J^n_\tau (\mu_0) \ , \quad \mu_0 \in \overline{D(E)} \ . \]
In order to ensure the proximal map is well-defined, Ambrosio, Gigli, and Savar\'e require that the quadratic perturbation $\Phi(\tau, \mu; \cdot)$ is \emph{convex along generalized geodesics} with base $\mu$ \cite[Definition 9.2.2]{AGS}.

\begin{defi}\label{convexgengeodefi}
Given $\lambda \in \R$, a functional $E: \P_{2, \omega_0}(\Rd) \to \R \cup \{ +\infty \}$ is \emph{$\lambda$-convex along a generalized geodesic $\mu_\alpha$} induced by $\bomega$, in case
\begin{align}
E(\mu_\alpha) \leq (1-\alpha)E(\mu_0) + \alpha E(\mu_1) -  \alpha (1- \alpha)\frac{\lambda}{2}W^2_{2,\bomega}(\mu_0,\mu_1) \ , \quad \forall \alpha \in [0,1]\ . \label{convexgengeodef}
\end{align}
$E$ is \emph{convex along generalized geodesics} if, for all $\mu_0, \mu_1, \omega \in \P_{2, \omega_0}(\Rd)$, there exists a generalized geodesic from $\mu_0$ to $\mu_1$ with base $\omega$ along which $E$ is convex.
\end{defi}

By equation (\ref{pseudo metric Hilbertian identity}), to ensure $\Phi(\tau, \mu; \cdot)$ is convex along generalized geodesics with base $\mu$, it is enough for $E$ to be $\lambda$-convex along generalized geodesics for $0< \tau< \frac{1}{\lambda^{-}}$ (where $\lambda^- = \max \{0, -\lambda\}$).
 Note that if $E$ is $\lambda$-convex along generalized geodesics, it is also $\lambda$-convex along geodesics. Going forward, we often assume the following:
 \begin{as} \label{funas}
$E$ is proper, coercive, lower semicontinuous, and $\lambda$-convex along generalized geodesics.
\end{as}
\noindent With this, the proximal map $J_\tau: \overline{D(E)} \to D(E): \mu \mapsto \mu_\tau$ is well-defined and continuous \cite[Theorem 4.1.2]{AGS}.

We now define the Wasserstein gradient flow of a functional $E$.
\begin{defi} \label{gradflowdef} Suppose $E: \P_{2, \omega_0}(\Rd) \to \R \cup \{ +\infty \}$ is proper, coercive, lower semicontinuous, and $\lambda$-convex along generalized geodesics for $\lambda \in \R$.
A locally absolutely continuous curve $\mu: (0,+\infty) \to \P_{2,\omega_0}(\Rd)$ is the\emph{ gradient flow} of $E$ with initial data $\mu \in \overline{D(E)}$ if $\mu(t) \xrightarrow{t \to 0} \mu$ and
\begin{align} \label{continuous evi}
\frac{1}{2} \frac{d}{dt} W_2^2(\mu(t), \omega) + \frac{\lambda}{2} W_2^2( \mu(t), \omega) \leq E(\omega) - E( \mu(t))  ,  \quad \forall \omega \in D(E) , \text{ a.e. }  t >0 \ .
\end{align}
\end{defi}

We close this section by recalling two inequalities for the discrete gradient flow that are consequences of the convexity of $\Phi(\tau, \mu; \cdot)$ along generalized geodesics with base $\mu$ \cite[Theorems 3.1.6 and 4.1.2]{AGS}. Suppose $E$ is proper, coercive, lower semicontinuous, and $\lambda$-convex along generalized geodesics with ${0 < \tau< 1/\lambda^-}$. Then for $\mu \in D(|\partial E|)$,
\begin{align} \label{theorem AGS1}
 \tau^2 | \partial E|^2(J_\tau \mu) &\leq  W^2_2(\mu,J_\tau \mu) \leq \frac{2\tau}{1+\lambda \tau} \left[ E(\mu) - E(J_\tau \mu) - \frac{1}{2 \tau} W_2^2(\mu,J_\tau \mu)\right] \leq \frac{\tau^2}{(1+\lambda \tau)^2} | \partial E|^2(\mu) \ .
\end{align}
For $\mu \in \overline{D(E)}$ and $\nu \in D(E)$,
\begin{align} \label{discevi}
\frac{1}{2\tau} [W_2^2(J_\tau \mu, \nu)-W_2^2(\mu,\nu) ] + \frac{\lambda}{2} W_2^2(J_\tau \mu, \nu)\leq E(\nu)- E(J_\tau \mu) -
\frac{1}{2\tau} W_2^2(\mu,J_\tau\mu) \ . 
\end{align}

\subsection{Transport Metrics} \label{transport metric section}
We now consider further properties of the pseudo-Wasserstein metric in the particular case that the base measure does not give mass to small sets. In contrast to the previous sections, in which we reviewed existing results, the results in the current section are new and play a key role in our proof of the Euler-Lagrange equation and our proof of the exponential formula.

First, we show the following generalization of (\ref{pseudo metric Hilbertian identity}).
\begin{prop} \label{convexity of pseudo metrics}
Fix $\mu_0, \mu_1, \omega \in \P_{2,\omega_0}(\Rd)$. Let $\bomega$ be a transport plan from $\mu_0$ to $\mu_1$ with base $\omega$ and let $\mu_\alpha$ be the generalized geodesic induced by $\bomega$. Then, for all $\nu \in P_{2,\omega_0}(\Rd) $ there exists a transport plan $\bomega_\alpha$ from $\nu$ to $\mu_\alpha$ with base $\omega$ so that
\[W_{2,\bomega_\alpha}^2(\nu, \mu_\alpha) = (1-\alpha) W_{2,\bomega_0}^2(\nu,\mu_0) + \alpha W_{2,\bomega_1}^2(\nu,\mu_1) - \alpha(1-\alpha)W_{2,\bomega}^2(\mu_0,\mu_1) \ , \quad \forall \alpha \in [0,1] \ .\]
\end{prop}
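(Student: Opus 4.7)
The plan is to construct $\bomega_\alpha$ explicitly from the optimal transport maps out of $\omega$ and then to verify the identity pointwise via the Hilbert-space polarization identity applied to the convex combination $(1-\alpha)\bt_\omega^{\mu_0} + \alpha \bt_\omega^{\mu_1}$.

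Since $\omega$ does not charge small sets (the standing hypothesis for this section), the optimal plans from $\omega$ to $\nu$, $\mu_0$, and $\mu_1$ are induced by optimal maps $\bt_\omega^\nu$, $\bt_\omega^{\mu_0}$, $\bt_\omega^{\mu_1}$, and they are unique. First I would observe that, combined with the marginal hypotheses $\pi^{1,2}\#\bomega \in \Gamma_0(\omega,\mu_0)$ and $\pi^{1,3}\#\bomega \in \Gamma_0(\omega,\mu_1)$, this uniqueness forces $\bomega$ itself to be deterministic:
\[ \bomega = (\id \times \bt_\omega^{\mu_0} \times \bt_\omega^{\mu_1})\#\omega. \]
Consequently, by Definition \ref{gen geo def}, the generalized geodesic takes the explicit form $\mu_\alpha = ((1-\alpha)\bt_\omega^{\mu_0} + \alpha \bt_\omega^{\mu_1})\#\omega$.

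Next I would verify that $\bt_\alpha := (1-\alpha)\bt_\omega^{\mu_0} + \alpha \bt_\omega^{\mu_1}$ coincides $\omega$-a.e.\ with the optimal map $\bt_\omega^{\mu_\alpha}$. Since $\bt_\omega^{\mu_0}$ and $\bt_\omega^{\mu_1}$ are $\omega$-a.e.\ cyclically monotone---equivalently, $\omega$-a.e.\ subgradients of convex potentials $\phi_0,\phi_1$---the map $\bt_\alpha$ is an $\omega$-a.e.\ subgradient of the convex function $(1-\alpha)\phi_0 + \alpha\phi_1$ and hence is itself cyclically monotone, therefore optimal by the characterization recalled in Section \ref{Wasserstein Metric subsection}. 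With this in hand, define
\[ \bomega_\alpha := (\id \times \bt_\omega^\nu \times \bt_\omega^{\mu_\alpha})\#\omega, \]
and read off from its marginals that $\bomega_\alpha$ is a transport plan from $\nu$ to $\mu_\alpha$ with base $\omega$; at $\alpha=0,1$ this recovers the natural $\bomega_0$ and $\bomega_1$.

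Finally, expanding each pseudo-Wasserstein distance as an integral against $\omega$ via the formula in Definition \ref{pseudo metric def}, and applying the pointwise Hilbertian identity
\[ |a - ((1-\alpha)b + \alpha c)|^2 = (1-\alpha)|a-b|^2 + \alpha|a-c|^2 - \alpha(1-\alpha)|b-c|^2 \]
in $\Rd$ with $a = \bt_\omega^\nu(y)$, $b = \bt_\omega^{\mu_0}(y)$, $c = \bt_\omega^{\mu_1}(y)$, then integrating against $\omega(dy)$, yields the asserted equality in one line. The only delicate step---and the main expected obstacle---is the middle step showing that the pointwise convex combination of optimal maps is again optimal; this is precisely where the ``does not charge small sets'' hypothesis on $\omega$ earns its keep, via the Brenier/McCann/Gigli characterization of optimal maps as subgradients of convex potentials. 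Everything else reduces to a scalar algebraic identity integrated against a single measure.
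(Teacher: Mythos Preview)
Your argument is correct \emph{under the extra hypothesis that $\omega$ does not charge small sets}, but that hypothesis is not part of Proposition~\ref{convexity of pseudo metrics}. The proposition is stated for an arbitrary base $\omega\in\P_{2,\omega_0}(\Rd)$ and an arbitrary plan $\bomega$; the restriction to bases that do not charge small sets is only introduced \emph{after} this proposition (see the sentence ``Now we consider the pseudo-Wasserstein metrics in the particular case that the base $\omega$ doesn't charge small sets'' and Definition~\ref{transport distance def}). So the opening line ``Since $\omega$ does not charge small sets (the standing hypothesis for this section)'' is an unjustified assumption, and the remainder of your proof---the deterministic form $\bomega=(\id\times\bt_\omega^{\mu_0}\times\bt_\omega^{\mu_1})\#\omega$, the existence of $\bt_\omega^\nu$, etc.---rests on it.

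The paper's proof avoids this by working entirely with plans: it invokes the gluing lemma \cite[Lemma 5.3.4]{AGS} to produce a four-marginal measure $\bmu\in\P((\Rd)^4)$ with $\pi^{1,2}\#\bmu=\pi^{1,2}\#\bomega$, $\pi^{1,3}\#\bmu=\pi^{1,3}\#\bomega$, and $\pi^{1,4}\#\bmu\in\Gamma_0(\omega,\nu)$, and then sets $\bomega_\alpha=(\pi^1\times\pi^4\times[(1-\alpha)\pi^2+\alpha\pi^3])\#\bmu$. The Hilbertian identity is then applied in $L^2(\bmu)$ rather than in $L^2(\omega)$. When $\omega$ does not charge small sets, $\bmu$ collapses to $(\id\times\bt_\omega^{\mu_0}\times\bt_\omega^{\mu_1}\times\bt_\omega^\nu)\#\omega$ and the two proofs coincide; in that regime your verification that $(1-\alpha)\bt_\omega^{\mu_0}+\alpha\bt_\omega^{\mu_1}$ is cyclically monotone (hence optimal) is exactly what is needed to check that $\pi^{1,3}\#\bomega_\alpha\in\Gamma_0(\omega,\mu_\alpha)$. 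So your approach is the right specialization, but to match the proposition as stated you should replace maps by plans and replace the explicit product construction by a gluing argument.
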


\begin{proof}
Fix $\bmu \in \P(\Rd \times \Rd \times \Rd \times \Rd)$ so that $\pi^{1,2} \# \bmu = \pi^{1,2} \# \bomega \in \Gamma_0(\omega, \mu_0),$ $\pi^{1,3} \# \bmu = \pi^{1,3} \# \bomega \in \Gamma_0(\omega,\mu_1)$, and $\pi^{1,4} \# \bmu \in \Gamma_0(\omega, \nu)$ \cite[Lemma 5.3.4]{AGS}. Define $\bomega_\alpha = (\pi^1 \times \pi^4 \times [(1-\alpha) \pi^2 + \alpha \pi^3]) \# \bmu$. Then $\bomega_\alpha$ is a transport plan from $\nu$ to $\mu_\alpha$ with base $\omega$ and, by the corresponding identity for $L^2(\bmu)$,
\[ W_{2,\bomega_\alpha}^2(\nu, \mu_\alpha) = \|x_4 - (1-\alpha)x_2 + \alpha x_3 \|^2_{L^2(\bmu)} =  (1-\alpha) W_{2,\bomega_0}^2(\nu,\mu_0) + \alpha W_{2,\bomega_1}^2(\nu,\mu_1) - \alpha(1-\alpha)W_{2,\bomega}^2(\mu_0,\mu_1) \ . \]
\end{proof}

Now we consider the pseudo-Wasserstein metrics in the particular case that the base $\omega$ doesn't charge small sets.
In this case, it becomes a true metric, and to emphasize this point, we call it the $(2,\omega)$-transport metric.
\begin{defi} \label{transport distance def}
The \emph{$(2, \omega)$-transport metric} $W_{2, \omega}: \P_{2,\omega}(\Rd) \times \P_{2,\omega}(\Rd) \to \R$ is given by
\begin{align*} 
 W_{2,\omega}&(\mu_0,\mu_1) = \left( \int |\mathbf{t}_\omega^{\mu_0} -
\mathbf{t}_\omega^{\mu_1} |^2 d \omega \right)^{1/2} \ .
\end{align*}
\end{defi}
We show that the generalized geodesics with base $\omega$ are exactly the constant speed geodesics for this metric. This allows us to consider functionals which are convex with respect to the transport metric and define a notion of subdifferential with respect to this metric. The map $\mu_0 \mapsto \bt_\omega^{\mu_0}$ is a geodesic preserving isometry from $\P_{2,\omega}(\Rd) $ to $L^2(\omega)$, so the square transport metric is $1$-convex with respect to its own geodesic structure. (The 1-convexity of the square transport metric can also be seen as a special case of Proposition \ref{convexity of pseudo metrics}.)
\begin{prop} \label{rhoWprop}
\quad
\begin{enumerate}[(i)]
\item $W_{2,\omega}$ is a metric on $\P_{2,\omega}(\Rd)$.
\item The constant speed
geodesics with respect to $W_{2,\omega}$ are the generalized geodesics with base $\omega$.
\item $\mu \mapsto \bt_\omega^{\mu}$ is a geodesic preserving isometry from $\P_{2,\omega}(\Rd) $ to $L^2(\omega)$, hence for all $\nu \in P_{2,\omega}(\Rd)$,
\begin{align} \label{convalonggengeo}
W_{2,\omega}^2(\nu, \mu_\alpha) = (1-\alpha) W_{2,\omega}^2(\nu,\mu_0) + \alpha
W_{2,\omega}^2(\nu,\mu_1) - \alpha(1-\alpha)W_{2,\omega}^2(\mu_0,\mu_1) \quad \forall \alpha \in [0,1] \ .
\end{align}
\end{enumerate}
\end{prop}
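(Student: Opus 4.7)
My plan is to exploit the map $T : \P_{2,\omega}(\Rd) \to L^2(\omega)$ defined by $T(\mu) = \bt_\omega^\mu$; this is well-defined because $\omega$ does not charge small sets, so Brenier's theorem provides a unique optimal map from $\omega$ to any target in $\P_{2,\omega}(\Rd)$. By the very definition of the metric, $W_{2,\omega}(\mu_0,\mu_1) = \|T(\mu_0) - T(\mu_1)\|_{L^2(\omega)}$, so $T$ is an isometric embedding. All three parts of the proposition will then reduce to Hilbertian facts about $L^2(\omega)$ once I identify the image of $T$ and show that it preserves the geodesic structure.

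For (i), the metric axioms reduce immediately to the corresponding properties of the $L^2(\omega)$ norm: non-negativity, symmetry, and the triangle inequality are inherited, and for definiteness I observe that $W_{2,\omega}(\mu_0,\mu_1)=0$ forces $\bt_\omega^{\mu_0} = \bt_\omega^{\mu_1}$ $\omega$-a.e., so pushing $\omega$ forward by the common map yields $\mu_0 = \mu_1$.

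For (iii), $T$ is an isometry by definition, so the key step --- and the step I expect to be the main obstacle --- is the identification
\[ \bt_\omega^{\mu_\alpha} = (1-\alpha)\bt_\omega^{\mu_0} + \alpha \bt_\omega^{\mu_1} \quad \omega\text{-a.e.,} \]
where $\mu_\alpha$ is the generalized geodesic from $\mu_0$ to $\mu_1$ with base $\omega$ (uniquely determined here because $\omega$ does not charge small sets, so the transport plan appearing in Definition \ref{gen geo def} is rigid). I would argue via Brenier's theorem: each $\bt_\omega^{\mu_i}$ is $\omega$-a.e. the gradient of a convex function, and a convex combination of two such gradients is itself the gradient of a convex function, hence cyclically monotone and therefore optimal from $\omega$ to its own push-forward; uniqueness of the optimal map then forces the displayed equality. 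This shows that the image of $T$ is a convex subset of $L^2(\omega)$ and that $T$ carries the generalized geodesic to the affine segment joining $T(\mu_0)$ and $T(\mu_1)$. The Hilbertian identity (\ref{convalonggengeo}) is then the polarization identity in $L^2(\omega)$ pulled back through $T$ (one could equivalently invoke Proposition \ref{convexity of pseudo metrics}, since its transport plan is uniquely determined in this setting).

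For (ii), the previous step yields $W_{2,\omega}(\mu_\alpha, \mu_\beta) = |\alpha - \beta|\, W_{2,\omega}(\mu_0,\mu_1)$, so generalized geodesics are constant speed geodesics in $W_{2,\omega}$. Conversely, any constant speed geodesic $\tilde\mu_\alpha$ in $W_{2,\omega}$ maps under $T$ to a constant speed geodesic in $L^2(\omega)$ joining $T(\mu_0)$ and $T(\mu_1)$; in a Hilbert space such a geodesic is unique and affine, so $T(\tilde\mu_\alpha) = (1-\alpha)T(\mu_0) + \alpha T(\mu_1)$, which lies in the image of $T$ by the convexity established above. Pushing $\omega$ forward by this segment recovers the generalized geodesic, so $\tilde \mu_\alpha = \mu_\alpha$.
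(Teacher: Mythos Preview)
Your proposal is correct and follows essentially the same route as the paper: both exploit the isometric embedding $\mu \mapsto \bt_\omega^\mu$ into $L^2(\omega)$ and the key fact that a convex combination of cyclically monotone maps is cyclically monotone, forcing $\bt_\omega^{\mu_\alpha} = (1-\alpha)\bt_\omega^{\mu_0} + \alpha \bt_\omega^{\mu_1}$. The only cosmetic differences are that the paper proves definiteness via $W_{2,\omega} \geq W_2$ rather than your push-forward argument, and obtains uniqueness of geodesics by substituting $\nu = \tilde\mu_\alpha$ into identity (\ref{convalonggengeo}) to compute $W_{2,\omega}(\tilde\mu_\alpha,\mu_\alpha)=0$, rather than invoking Hilbert-space geodesic uniqueness as you do.
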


\begin{proof}
First, we show (i). $W_{2,\omega}$ is symmetric and nonnegative by definition. It is
non-degenerate since $W_{2,\omega}(\mu,\nu) \geq W_2(\mu,\nu)$. It satisfies the triangle inequality since $L^2(\omega)$ satisfies the
triangle inequality.

Next, we show that generalized geodesics with base $\omega$ are constant speed geodesics in $W_{2, \omega}$.
 Let $\mu_\alpha = ((1-\alpha)\bt_\omega^{\mu_0} + \alpha
\bt_\omega^{\mu_1})\# \omega$ be the generalized geodesic with base $\omega$ from
$\mu_0$ to $\mu_1$. Since $(1-\alpha)\bt_\omega^{\mu_0} + \alpha \bt_\omega^{\mu_1}$ is a convex combination of cyclically monotone maps, $\bt_\omega^{\mu_\alpha} = (1-\alpha)\bt_\omega^{\mu_0} + \alpha \bt_\omega^{\mu_1}$, hence
\begin{align*}
W_{2,\omega}(\mu^{\mu \to \nu}_\alpha,\mu^{\mu \to \nu}_\beta) &= \| [(1-\alpha)\bt_\omega^\mu + \alpha \bt_\omega^\nu] - [(1-\beta)\bt_\omega^\mu +
\beta \bt_\omega^\nu] \|_{L^2(\omega)}= |\beta - \alpha |W_{2,\omega}(\mu,\nu) \ .
\end{align*}

Since the isometry $\mu \mapsto \bt_\omega^\mu$ sends $\mu_\alpha \mapsto t_\omega^{\mu_\alpha} =  (1-\alpha)\bt_\omega^{\mu_0} + \alpha \bt_\omega^{\mu_1}$, equation (\ref{convalonggengeo}) holds for $\mu_\alpha$ by the the corresponding identity for $L^2(\omega)$. 
It remains to show the $\mu_\alpha$ is the unique geodesic from  $\mu_0$ to $\mu_1$. Suppose $\tilde{\mu}_\alpha$ is another. Setting $\nu = \tilde{\mu}_\alpha$ in equation (\ref{convalonggengeo}) gives
\begin{align*}
W_{2,\omega}^2(\tilde{\mu}_\alpha, \mu_\alpha) &= (1-\alpha) W_{2,\omega}^2(\tilde{\mu}_\alpha,\mu_0) + \alpha
W_{2,\omega}^2(\tilde{\mu}_\alpha,\mu_1) - \alpha(1-\alpha)W_{2,\omega}^2(\mu_0,\mu_1) \\
&= (1-\alpha) \alpha^2W_{2,\omega}^2(\mu_0,\mu_1) + \alpha (1-\alpha)^2
W_{2,\omega}^2(\mu_0,\mu_1) - \alpha(1-\alpha)W_{2,\omega}^2(\mu_0,\mu_1) =0 \ .
\end{align*}
\end{proof}

\begin{remark}[convexity] \label{convexity remark}
By (ii), if $E$ is $\lambda$-convex along generalized geodesics with base $\omega$, it is $\lambda$-convex in the $W_{2,\omega}$
metric. By (iii), the function $\mu \mapsto W_{2,\omega}^2(\nu,\mu)$ is
2-convex in the $W_{2,\omega}$ metric for any $\nu \in \P_{2, \omega}(\Rd)$. Note the difference between (\ref{pseudo metric Hilbertian identity}), which ensures $\mu \mapsto W^2_2(\omega, \mu)$ is convex along generalized geodesics with base $\omega$, and (\ref{convalonggengeo}), which ensures $\mu \mapsto W^2_{2,\omega}(\nu,\mu)$ is convex along generalized geodesics with base $\omega$ for all $\nu \in P_{2,\omega}(\Rd)$.
\end{remark}
\begin{remark}[lower semicontinuity]
By Remark \ref{transport metric bounds W2}, if $\mu_n$ converges to $\mu$ in $W_{2,\omega}$, then the sequence converges in $W_2$. Therefore, if $E$ is lower semicontinuous in $W_2$, $E$ is lower semicontinuous in $W_{2, \omega}$.
\end{remark}

Using the isometry from $\P_{2,\omega}(\Rd) $ to $L^2(\omega)$, we define the $W_{2,\omega}$ subdifferential.

\begin{defi} \label{rhosubdiff}
Given $E: \P_{2, \omega}(\Rd) \to \R \cup \{+\infty\}$ proper, lower semicontinuous, and $\lambda$-convex with respect to $W_{2,\omega}$, $\bxi \in L^2(\omega)$ belongs to the
 \emph{$W_{2,\omega}$ subdifferential} of $E$ at $\mu$ in case for all $\nu \in \P_{2, \omega}(\Rd)$,
\begin{align}
E(\nu) - E(\mu) \geq \int \la \bxi, \mathbf{t}_\omega^\nu -
\mathbf{t}_\omega^\mu \ra d \omega + \frac{\lambda}{2}W_{2,\omega}^2(\mu,\nu) \ . \label{convexrhosubdiff}
\end{align}
We denote this by $\bxi \in \partial_{2,\omega} E(\mu)$.
\end{defi}

\begin{remark}[characterization of minimizers] \label{minimizersubdiffcor}
Given $E$ as in Definition \ref{rhosubdiff} with $\lambda \geq 0$, $\mu$ is a minimizer of $E$ if and only if $0 \in \partial_{2,\omega} E(\mu)$.
\end{remark}

By equation (\ref{pseudo metric Hilbertian identity}) and Remark \ref{convexity remark},  the square Wasserstein distance from $\omega$, $\mu \mapsto W_{2}^2(\omega,\mu)$, is
2-convex in $W_{2,\omega}$. Thus, we may compute its subdifferential with respect to this metric.

\begin{prop}
\label{W2rhosubdiffprop}
Let $F(\mu) = W^2_2(\omega, \mu)$. Then $2(\mathbf{t}_\omega^\mu - \id) \in \partial_{2,\omega} F(\mu)$.
\end{prop}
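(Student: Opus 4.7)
The plan is to reduce the claim to a one-line Hilbert space identity via the isometry of Proposition \ref{rhoWprop}(iii). Under the map $\mu \mapsto \mathbf{t}_\omega^\mu$, the metric space $(\P_{2,\omega}(\Rd), W_{2,\omega})$ embeds isometrically into $L^2(\omega)$, and $F(\mu) = W_2^2(\omega,\mu) = \|\mathbf{t}_\omega^\mu - \id\|_{L^2(\omega)}^2$. So $F$ becomes, in these coordinates, the standard squared-distance functional $T \mapsto \|T - \id\|_{L^2(\omega)}^2$, which is a smooth $2$-convex quadratic on the Hilbert space $L^2(\omega)$. The preceding text has already noted that $\mu \mapsto W_2^2(\omega,\mu)$ is $2$-convex in $W_{2,\omega}$, so Definition \ref{rhosubdiff} applies with $\lambda = 2$, and the right quantity to verify is
\[ F(\nu) - F(\mu) \geq \int \bigl\langle 2(\mathbf{t}_\omega^\mu - \id),\, \mathbf{t}_\omega^\nu - \mathbf{t}_\omega^\mu \bigr\rangle\, d\omega + W_{2,\omega}^2(\mu,\nu) \]
for every $\nu \in \P_{2,\omega}(\Rd)$.

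The second step is the direct verification. Abbreviate $a = \mathbf{t}_\omega^\mu - \id$ and $b = \mathbf{t}_\omega^\nu - \id$ in $L^2(\omega)$, so that $F(\mu) = \|a\|^2$, $F(\nu) = \|b\|^2$, $\mathbf{t}_\omega^\nu - \mathbf{t}_\omega^\mu = b - a$, and $W_{2,\omega}^2(\mu,\nu) = \|b-a\|^2$. Then the desired inequality becomes
\[ \|b\|^2 - \|a\|^2 \;\geq\; 2\langle a, b - a\rangle + \|b - a\|^2, \]
which is in fact an equality, since expanding $\|b\|^2 = \|a + (b-a)\|^2 = \|a\|^2 + 2\langle a, b-a\rangle + \|b-a\|^2$ gives exactly this relation. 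Hence $2(\mathbf{t}_\omega^\mu - \id) \in \partial_{2,\omega}F(\mu)$, and in fact the subdifferential inequality holds with equality for every $\nu$, reflecting the fact that $F$ is a pure quadratic in the $L^2(\omega)$ coordinates provided by Proposition \ref{rhoWprop}(iii).

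There is no real obstacle: the work has already been done in establishing the isometry and the $2$-convexity of $F$ in $W_{2,\omega}$, and the statement is then a standard computation of the gradient of a squared-distance on a Hilbert space. The only conceptual point worth emphasizing is the contrast with the usual Wasserstein subdifferential calculus, where identifying a subdifferential of $\mu \mapsto W_2^2(\omega,\mu)$ requires semiconcavity arguments along geodesics; here, passing to the $W_{2,\omega}$ framework turns it into a transparent Hilbert space calculation.
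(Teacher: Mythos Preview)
Your proof is correct and is essentially identical to the paper's own argument: both compute $F(\nu)-F(\mu)=\|\mathbf{t}_\omega^\nu-\id\|_{L^2(\omega)}^2-\|\mathbf{t}_\omega^\mu-\id\|_{L^2(\omega)}^2$ and expand the quadratic to obtain the required identity with equality. The paper's proof is a single line doing exactly the computation you abbreviate with $a$ and $b$.
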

\begin{proof}
$W_2^2(\omega,\nu) - W_2^2(\omega, \mu) = \int|\mathbf{t}_\omega^\nu - \id|^2 d
\omega - \int | \mathbf{t}_\omega^\mu - \id|^2 d \omega =  \int 2 \la \mathbf{t}_\omega^\mu - \id, \mathbf{t}_\omega^\nu -
\mathbf{t}_\omega^\mu \ra d \omega + W_{2,\omega}^2(\mu,\nu) $.
\end{proof}

Finally, we relate the transport metric subdifferential to the strong subdifferential from Definition \ref{Wsubdiff}.

\begin{lem}
\label{subdifflemma}
Suppose $E: \P_{2, \omega}(\Rd) \to \R \cup \{+\infty\}$ satisfies domain assumption \ref{small set assumption} and is proper, lower semicontinuous, and $\lambda$-convex along generalized geodesics. Then if $\bxi \in \partial E(\mu)$ is a strong subdifferential, we have $\bxi
\circ \mathbf{t}_\omega^\mu \in \partial_{2,\omega} E(\mu)$.
\end{lem}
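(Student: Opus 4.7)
The goal is to verify the $W_{2,\omega}$ subdifferential inequality
\[ E(\nu) - E(\mu) \geq \int \la \bxi \circ \mathbf{t}_\omega^\mu, \mathbf{t}_\omega^\nu - \mathbf{t}_\omega^\mu \ra d\omega + \frac{\lambda}{2} W_{2,\omega}^2(\mu,\nu) \]
for every $\nu \in \P_{2,\omega}(\Rd)$. My plan is to exploit $\lambda$-convexity of $E$ along the generalized geodesic from $\mu$ to $\nu$ with base $\omega$ to get an upper bound on $E(\mu_\alpha)-E(\mu)$, then use the strong subdifferential inequality at $\mu$ to get a matching lower bound, and finally divide by $\alpha$ and send $\alpha \to 0^+$.

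First, let $\mu_\alpha$ be the generalized geodesic from $\mu$ to $\nu$ with base $\omega$. Since $\omega$ does not charge small sets, Proposition \ref{rhoWprop} gives $\mathbf{t}_\omega^{\mu_\alpha} = (1-\alpha)\mathbf{t}_\omega^\mu + \alpha \mathbf{t}_\omega^\nu$. By $\lambda$-convexity along generalized geodesics,
\[ E(\mu_\alpha) - E(\mu) \leq \alpha\bigl(E(\nu)-E(\mu)\bigr) - \alpha(1-\alpha)\frac{\lambda}{2}W_{2,\omega}^2(\mu,\nu). \]
To produce a matching lower bound, I write $\mu_\alpha = \bs_\alpha \# \mu$ with $\bs_\alpha := \mathbf{t}_\omega^{\mu_\alpha} \circ \mathbf{t}_\mu^\omega$; this uses Assumption \ref{small set assumption} so that $\mathbf{t}_\omega^\mu \circ \mathbf{t}_\mu^\omega = \id$ holds $\mu$-a.e. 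Then $\bs_\alpha - \id = (\mathbf{t}_\omega^{\mu_\alpha} - \mathbf{t}_\omega^\mu)\circ \mathbf{t}_\mu^\omega = \alpha(\mathbf{t}_\omega^\nu - \mathbf{t}_\omega^\mu)\circ \mathbf{t}_\mu^\omega$, so pushing forward through $\mathbf{t}_\mu^\omega \# \mu = \omega$ yields the two identities
\[ \int \la \bxi, \bs_\alpha - \id\ra\, d\mu = \alpha \int \la \bxi \circ \mathbf{t}_\omega^\mu, \mathbf{t}_\omega^\nu - \mathbf{t}_\omega^\mu\ra\, d\omega, \qquad \|\bs_\alpha - \id\|_{L^2(\mu)} = \alpha\, W_{2,\omega}(\mu,\nu). \]
In particular $\bs_\alpha \to \id$ in $L^2(\mu)$ as $\alpha \to 0^+$, so the strong subdifferential inequality applies to $\bs_\alpha$ and gives
\[ E(\mu_\alpha) - E(\mu) \geq \alpha \int \la \bxi \circ \mathbf{t}_\omega^\mu, \mathbf{t}_\omega^\nu - \mathbf{t}_\omega^\mu\ra\, d\omega + o(\alpha). \]

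Combining the two bounds, dividing by $\alpha > 0$, and letting $\alpha \to 0^+$ absorbs the $o(\alpha)$ error and the factor $(1-\alpha)$, producing exactly the desired inequality, which is the defining condition for $\bxi \circ \mathbf{t}_\omega^\mu \in \partial_{2,\omega} E(\mu)$. The only step requiring real care is the change of variables used to rewrite $\int \la\bxi,\bs_\alpha-\id\ra\,d\mu$ in the $W_{2,\omega}$ form, since it depends on the $\mu$-a.e.\ identity $\mathbf{t}_\omega^\mu \circ \mathbf{t}_\mu^\omega = \id$; this is where domain assumption \ref{small set assumption} (together with the hypothesis that $\omega$ does not charge small sets, implicit in working in $\P_{2,\omega}(\Rd)$) is essential.
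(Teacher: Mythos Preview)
Your proof is correct and follows essentially the same approach as the paper's. Both apply the strong subdifferential inequality with a transport map of the form $\mathbf{t}_\omega^{\,\cdot} \circ \mathbf{t}_\mu^\omega$ and perform the change of variables via $\mathbf{t}_\omega^\mu \# \omega = \mu$ and $\mathbf{t}_\omega^\mu \circ \mathbf{t}_\mu^\omega = \id$ $\mu$-a.e. The only difference is that the paper states the resulting inequality with an $o(W_{2,\omega}(\mu,\nu))$ error and then cites \cite[Section 10.1.1, B]{AGS} to upgrade it to the global subdifferential inequality with the $\frac{\lambda}{2}W_{2,\omega}^2(\mu,\nu)$ term, whereas you carry out that upgrade explicitly by testing against the generalized geodesic $\mu_\alpha$, using $\lambda$-convexity, and sending $\alpha \to 0$---which is precisely the content of the cited AGS argument.
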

\begin{proof} If $\bxi \in \partial E(\mu)$, then $\bxi \in L^2(\mu)$, hence $\bxi \circ \bt_\omega^\mu \in L^2(\omega)$. Furthermore, for all $\nu \in \P_{2, \omega}(\Rd)$,
\begin{align*}
E(\nu) - E(\mu) &\geq \int_{\Rd} \la \bxi , \mathbf{t_\omega^\nu} \circ
\mathbf{t}_\mu^\omega-\id \ra d \mu + o(\|\mathbf{t_\omega^\nu} \circ
\mathbf{t}_\mu^\omega - \id\|_{L^2(\mu)}) = \int_{\Rd} \la \bxi \circ \mathbf{t}_\omega^\mu, \mathbf{t}_\omega^\nu -
\mathbf{t}_\omega^\mu \ra d \omega + o(W_{2,\omega}(\mu,\nu)) \ .
\end{align*}
As in \cite[Section 10.1.1, B]{AGS}, this implies $E(\nu) - E(\mu) \geq \int \la \bxi \circ \mathbf{t}_\omega^\mu, \mathbf{t}_\omega^\nu -
\mathbf{t}_\omega^\mu \ra d \omega + \frac{\lambda}{2}W_{2,\omega}(\mu,\nu)$.
\end{proof}

We close this section with an analogue of inequality (\ref{discevi}) for transport metrics. Note that since $W_2^2(J_\tau \mu, \nu) \leq W_{2, \mu}^2(J_\tau \mu, \nu)$, this is stronger than (\ref{discevi}). We require this strength in our proof of Theorem \ref{recineqthm}.

\begin{thm} \label{sdiscevi} Suppose $E$ is proper, coercive, lower semicontinuous, and $\lambda$-convex along generalized geodesics for $\lambda \in \R$. Then for all $\mu \in \overline{D(E)}$ and $\nu \in D(E)$,
\begin{align*}
\frac{1}{2\tau} [W_{2,\mu}^2(J_\tau \mu, \nu)-W_2^2(\mu,\nu) ] + \frac{\lambda}{2} W_{2,\mu}^2(J_\tau \mu,\nu) \leq E(\nu)- E(J_\tau \mu) -
\frac{1}{2\tau} W_2^2(\mu,J_\tau \mu)  \ .
\end{align*}
\end{thm}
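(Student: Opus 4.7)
The plan is to adapt the standard convexity-plus-minimality argument underlying the familiar discrete EVI (\ref{discevi}), but to track the pseudo-Wasserstein distance coming from the natural generalized geodesic rather than collapsing it down to $W_2$ at the end. Since the base measure $\mu$ does not charge small sets (Assumption \ref{small set assumption}), this pseudo-Wasserstein distance will equal the transport metric $W_{2,\mu}$, and we obtain the strengthened inequality.

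Concretely, I fix $\nu \in D(E)$ and let $\bomega$ be the (unique, up to the base) transport plan with base $\mu$ giving the generalized geodesic
\[
\mu_\alpha = \bigl((1-\alpha)\bt_\mu^{J_\tau\mu} + \alpha\,\bt_\mu^{\nu}\bigr)\#\mu, \qquad \alpha\in[0,1],
\]
from $J_\tau\mu$ to $\nu$ with base $\mu$. Identity (\ref{pseudo metric Hilbertian identity}) applied with $\omega=\mu$ gives
\[
\tfrac{1}{2\tau}W_2^2(\mu,\mu_\alpha)=(1-\alpha)\tfrac{1}{2\tau}W_2^2(\mu,J_\tau\mu)+\alpha\tfrac{1}{2\tau}W_2^2(\mu,\nu)-\alpha(1-\alpha)\tfrac{1}{2\tau}W_{2,\bomega}^2(J_\tau\mu,\nu),
\]
while the $\lambda$-convexity of $E$ along $\mu_\alpha$ (Assumption \ref{funas}) yields
\[
E(\mu_\alpha)\leq (1-\alpha)E(J_\tau\mu)+\alpha E(\nu)-\alpha(1-\alpha)\tfrac{\lambda}{2}W_{2,\bomega}^2(J_\tau\mu,\nu).
\]
Adding these two gives convexity of $\Phi(\tau,\mu;\cdot)$ along $\mu_\alpha$ with modulus $(1+\lambda\tau)/\tau$ measured in $W_{2,\bomega}$.

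Now I invoke the minimality $\Phi(\tau,\mu;J_\tau\mu)\leq \Phi(\tau,\mu;\mu_\alpha)$. Substituting into the convexity estimate, subtracting $(1-\alpha)\Phi(\tau,\mu;J_\tau\mu)$, dividing by $\alpha>0$, and sending $\alpha\downarrow 0$ produces
\[
\Phi(\tau,\mu;J_\tau\mu)\leq \Phi(\tau,\mu;\nu)-\tfrac{1+\lambda\tau}{2\tau}W_{2,\bomega}^2(J_\tau\mu,\nu).
\]
Expanding $\Phi$ and rearranging isolates
\[
\tfrac{1}{2\tau}\bigl[W_{2,\bomega}^2(J_\tau\mu,\nu)-W_2^2(\mu,\nu)\bigr]+\tfrac{\lambda}{2}W_{2,\bomega}^2(J_\tau\mu,\nu)\leq E(\nu)-E(J_\tau\mu)-\tfrac{1}{2\tau}W_2^2(\mu,J_\tau\mu).
\]
To finish, I replace $W_{2,\bomega}$ by $W_{2,\mu}$: because $\mu\in D(E)$ does not charge small sets, the optimal transport maps $\bt_\mu^{J_\tau\mu}$ and $\bt_\mu^{\nu}$ exist uniquely, $\bomega$ is the unique transport plan with base $\mu$, and by Definition \ref{pseudo metric def} and Definition \ref{transport distance def} we have $W_{2,\bomega}(J_\tau\mu,\nu)=W_{2,\mu}(J_\tau\mu,\nu)$.

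The main obstacle I anticipate is not algebraic but hypothesis-tracking: ensuring that $\mu$ may legitimately be used as the base of a generalized geodesic in the sense needed to identify $W_{2,\bomega}$ with $W_{2,\mu}$. Assumption \ref{small set assumption} handles this when $\mu \in D(E)$; for $\mu\in\overline{D(E)}$ the statement should be understood either under that same non-degeneracy hypothesis on $\mu$ or, equivalently, as an inequality in terms of $W_{2,\bomega}$ for the canonical $\bomega$ above, which collapses to $W_{2,\mu}$ precisely when the transport metric is defined. Once this point is dispatched, the remainder is the standard variation-of-the-minimizer argument in Ambrosio--Gigli--Savar\'e, with the only twist being that I refrain from bounding $W_{2,\bomega}$ below by $W_2$ (as is done to obtain (\ref{discevi})).
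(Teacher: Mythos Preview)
Your proposal is correct and follows essentially the same approach as the paper: combine the $1$-convexity of $\tfrac{1}{2\tau}W_2^2(\mu,\cdot)$ from (\ref{pseudo metric Hilbertian identity}) with the $\lambda$-convexity of $E$ along a generalized geodesic based at $\mu$, use the minimality of $J_\tau\mu$, divide by $\alpha$, and let $\alpha\to 0$. The paper states this more tersely (calling it the Talagrand inequality for $\Phi(\tau,\mu;\cdot)$ in the $W_{2,\mu}$ metric when $\mu$ does not charge small sets, and writing out the same $\alpha\to 0$ argument otherwise), and your concern about hypothesis-tracking for $\mu\in\overline{D(E)}\setminus D(E)$ is exactly the point the paper addresses by working with the pseudo-Wasserstein distance $W_{2,\bomega}$ in the general case, identifying it with $W_{2,\mu}$ only when the latter is defined.
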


\begin{proof}
If $\mu$ does not charge small sets, so $W_{2, \mu}$ is a well-defined metric, this is simply the Talagrand inequality for $\Phi(\tau, \mu;\cdot)$ in the $W_{2, \mu}$ metric. Otherwise, since both $E$ and $\frac{1}{2 \tau} W_2^2(\mu,\cdot)$ are convex along generalized geodesics with base $\mu$, so is $\Phi(\tau, \mu;\cdot)$. Thus, for any generalized geodesic $\mu_\alpha$ from $J_\tau \mu$ to $\nu$ with base $\mu$, since $J_\tau \mu$ is the minimizer of $\Phi(\tau,\mu;\cdot)$,
\[ \Phi(\tau,\mu;J_\tau \mu) \leq \Phi(\tau, \mu;\mu_\alpha) \leq (1-\alpha) \Phi(\tau,\mu; J_\tau \mu) + \alpha \Phi(\tau, \mu;\nu)-\frac{1+ \lambda \tau}{2\tau} \alpha (1-\alpha) W^2_{2,\mu}(J_\tau \mu,\nu) \ . \]
Rearranging and dividing by $\alpha$ gives $0 \leq \Phi(\tau, \mu;\nu)- \Phi(\tau,\mu; J_\tau \mu)  -\frac{1+\lambda \tau}{2\tau} (1-\alpha) W^2_{2,\mu}(J_\tau \mu,\nu)$.
Sending $\alpha \to 0$ and expanding $\Phi$ according to its definition gives the result.
\end{proof}

\subsection{Euler-Lagrange Equation} \label{el section}
In this section, we use our results on transport metrics to prove an Euler-Lagrange equation characterizing $J_\tau \mu$. The fact that $J_\tau \mu$ satisfies $\frac{1}{\tau}(\mathbf{t}_{J_\tau \mu}^\mu - \id) \in \partial E (J_\tau \mu)$
was proved by Ambrosio, Gigli, and Savar\'e  using a type of argument introduced by Otto \cite[Lemma 10.1.2]{AGS}\cite{O,O2}. The converse is new.

\begin{thm}[Euler-Lagrange equation] \label{elthm} Assume that $E$ satisfies domain assumption \ref{small set assumption} and convexity assumption \ref{funas} for $\lambda \in \R$. Then for $\mu \in D(E)$ and
 $0< \tau< \frac{1}{\lambda^{-}}$, $\nu$ is the unique minimizer of the quadratic perturbation $\Phi(\tau, \mu; \cdot)$ if and only if
\begin{align}
\frac{1}{\tau}(\mathbf{t}_{\nu}^\mu - \id) \in \partial E (\nu) \text{
is a strong subdifferential.} \label{eleqn}
\end{align}
Hence, $J_\tau \mu$ is
characterized by the fact that $\frac{1}{\tau}(\bt_{J_\tau \mu}^\mu - \id)
\in \partial E(J_\tau \mu)$ is a strong subdifferential.
\end{thm}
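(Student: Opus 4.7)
The forward direction (that the minimizer satisfies the Euler--Lagrange equation) is cited as known, so the plan is to prove the converse: if $\frac{1}{\tau}(\bt_\nu^\mu - \id) \in \partial E(\nu)$ is a strong subdifferential, then $\nu$ is the unique minimizer of $\Phi(\tau, \mu; \cdot)$. The strategy is to transport the problem into the $W_{2,\mu}$ metric, where both $E$ and $\frac{1}{2\tau}W_2^2(\mu,\cdot)$ are convex, so that the characterization of minimizers via a vanishing subdifferential (Remark \ref{minimizersubdiffcor}) applies; the proof then reduces to observing that two subdifferential vectors at $\nu$ cancel.

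The first step is to translate the hypothesis into the $W_{2,\mu}$ framework using Lemma \ref{subdifflemma} with base $\omega = \mu$. Since $\mu \in D(E)$ by assumption and $\nu \in D(E)$ is required for the strong subdifferential inequality to be non-trivial, domain assumption \ref{small set assumption} gives $\bt_\nu^\mu \circ \bt_\mu^\nu = \id$ $\mu$-a.e., so
\[ \frac{1}{\tau}(\bt_\nu^\mu - \id) \circ \bt_\mu^\nu \;=\; \frac{1}{\tau}(\id - \bt_\mu^\nu) \;\in\; \partial_{2,\mu} E(\nu). \]
Applying Proposition \ref{W2rhosubdiffprop} with the same base $\omega = \mu$ to $F(\sigma) = W_2^2(\mu, \sigma)$ evaluated at $\nu$ yields $\frac{1}{\tau}(\bt_\mu^\nu - \id) \in \partial_{2,\mu}\left(\frac{1}{2\tau}W_2^2(\mu, \cdot)\right)(\nu)$. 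Summing these two subdifferential inequalities, which is legitimate because both functionals are convex in $W_{2,\mu}$ (the former by Remark \ref{convexity remark}, the latter by (\ref{pseudo metric Hilbertian identity})), the two vectors cancel and we obtain
\[ 0 \;\in\; \partial_{2,\mu} \Phi(\tau, \mu; \cdot)(\nu). \]

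Combining this with the $(\frac{1}{\tau} + \lambda)$-convexity of $\Phi(\tau, \mu; \cdot)$ in $W_{2,\mu}$ (and using that $\frac{1}{\tau} + \lambda > 0$ is exactly the restriction $\tau < 1/\lambda^-$), the $W_{2,\mu}$-subdifferential inequality reads
\[ \Phi(\tau, \mu; \sigma) \;\geq\; \Phi(\tau, \mu; \nu) + \frac{1/\tau + \lambda}{2}\, W_{2,\mu}^2(\nu, \sigma) \qquad \forall \sigma \in D(E), \]
with strict inequality whenever $W_{2,\mu}(\nu, \sigma) > 0$. Since $W_{2,\mu}$ is a genuine metric on $\P_{2,\mu}(\Rd)$ by Proposition \ref{rhoWprop}(i), this forces $\sigma = \nu$, establishing uniqueness and hence $\nu = J_\tau \mu$.

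The main obstacle is the bookkeeping in the first step: one must carefully track which maps are composed and pushed forward, and invoke both $\mu,\nu \in D(E)$ to obtain the inversion identity $\bt_\nu^\mu \circ \bt_\mu^\nu = \id$ $\mu$-a.e. Once this is set up, the remainder is essentially a Hilbert-space computation, made possible by the fact that the transport metric $W_{2,\mu}$ linearizes the generalized geodesic structure with base $\mu$ and converts the quadratic perturbation into a strictly convex functional on an $L^2(\mu)$-like space.
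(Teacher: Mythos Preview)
Your proposal is correct and follows essentially the same approach as the paper's proof: translate the strong-subdifferential hypothesis into a $W_{2,\mu}$-subdifferential via Lemma~\ref{subdifflemma}, add the $W_{2,\mu}$-subdifferential of $\frac{1}{2\tau}W_2^2(\mu,\cdot)$ from Proposition~\ref{W2rhosubdiffprop} to obtain $0 \in \partial_{2,\mu}\Phi(\tau,\mu;\nu)$, and conclude by the strict convexity of $\Phi$ in $W_{2,\mu}$. Your write-up is somewhat more explicit than the paper's (spelling out the inversion identity $\bt_\nu^\mu \circ \bt_\mu^\nu = \id$ and the final strict-convexity inequality), but the logical structure is identical.
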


We assume $\mu \in D(E)$ and $E$ satisfies domain assumption \ref{small set assumption} to ease notation. See Theorem \ref{genelthm} for how the assumption $\mu \in D(E)$ can be relaxed to $\mu \in \overline{D(E)}$ and the domain assumption can be removed.

\begin{proof}[Proof of Theorem \ref{elthm}]
Suppose $\frac{1}{\tau}(\mathbf{t}_{\nu}^\mu - \id) \in \partial E (\nu)$ is a strong subdifferential. By Lemma \ref{subdifflemma}, we have ${\frac{1}{\tau}(\id - \mathbf{t}_\mu^{\nu}) \in \partial_{2,\mu} E(\nu)}$.
By additivity of the subdifferential and Proposition
\ref{W2rhosubdiffprop},
\[ \frac{1}{2\tau} 2(\mathbf{t}_\mu^\nu - \id) + \frac{1}{\tau}(\id -
\mathbf{t}_\mu^{\nu}) = 0 \in \partial_{2,\mu} \Phi(\tau, \mu; \nu) \ . \]
Since $W_2^2(\mu, \cdot) $ is 2-convex in the
$W_{2,\mu}$ metric and $E$ is $\lambda$-convex in the $W_{2, \mu}$ metric, $\Phi(\tau, \mu; \cdot)$ is $\left(\frac{1}{\tau} + \lambda \right)$-convex in the $W_{2,\mu}$ metric, with $\left(\frac{1}{\tau} + \lambda \right) > 0$. Therefore, by Remark
\ref{minimizersubdiffcor}, $0 \in \partial_{2,\mu} \Phi(\tau, \mu;
\nu)$ implies $\nu$ minimizes $\Phi(\tau, \mu; \cdot)$.
See \cite[Lemma 10.1.2]{AGS} for the converse.
\end{proof}

\section{Exponential Formula for the Wasserstein Metric} \label{exp form section}
We now apply the results on transport metrics and the Euler-Lagrange equation to give a new proof of the exponential formula in the style of Crandall and Liggett \cite{CrandallLiggett}.
 In section \ref{contraction inequality section}, we prove a new ``almost'' contraction inequality, analogous to the key inequality $\|J_\tau u - J_\tau v \| \leq \|u - v\|$ from Crandall and Liggett's proof. In section \ref{prox map time step section}, we apply our Euler-Lagrange equation to control the behavior of the discrete gradient flow $J^n_\tau \mu$ as the time step $\tau$ varies. In particular, proximal maps with different time steps can be related by considering intermediate points along a geodesic between $\mu$ and $J_\tau \mu$. The convexity of the square transport metric $W^2_{2,\mu}$ along this geodesic allows us to control the behavior of the geodesic in terms of its endpoints. In sections \ref{asymrecineqsec} and \ref{rasindsec}, we use these ideas to bound the distance between discrete gradient flow sequences with different time steps via an asymmetric induction in the style of Rasmussen \cite{R}. Finally, in section \ref{exp form subsection}, we conclude that the discrete gradient flow converges to the gradient flow.
We close section \ref{exp form subsection} by applying our estimates to give simple proofs of properties of the gradient flow, including the contracting semigroup property and the energy dissipation inequality.
(Note that we do not consider gradient flow with respect to the transport metrics, but instead use the transport metrics for intermediate estimates of the Wasserstein discrete gradient flow.)

\subsection{Almost Contraction Inequality} \label{contraction inequality section}
In this section, we use the convexity of $\Phi(\tau, \mu; \cdot)$ along generalized geodesics with base $\mu$, in the form of inequality (\ref{discevi}), to prove an almost contraction inequality for the discrete gradient flow. (Inequality (\ref{discevi}) is sufficient for this purpose---we use the stronger version in Theorem \ref{sdiscevi} later.)
Our approach is similar to previous work of Carlen and the author \cite{CC}, though instead of symmetrizing the contraction inequality, we leave the inequality in an asymmetric form that is compatible with the asymmetric induction in Theorems \ref{recineqthm}-\ref{W2RasBoundThm}.

For the $\lambda \leq 0$ case, our argument follows the first three steps in the proof of \cite[Lemma 4.2.4]{AGS}. For the $\lambda > 0$ case, we use a new approach.

\begin{thm}[almost contraction inequality] \label{cothm}
Suppose $E$ satisfies convexity assumption \ref{funas}, $\mu \in D(|\partial E|)$, and $\nu \in \overline{D(E)}$. Then we have the following inequalities for all $0< \tau< -\frac{1}{\lambda}$,
\begin{align*} 
\text{if $\lambda > 0$: } \quad (1+\lambda \tau)^2 W_2^2(J_\tau \mu, J_\tau \nu) &\leq W_2^2(\mu,\nu) + \tau^2 |\partial E|^2(\mu) + 2 \lambda \tau^2 \left[E(\nu) - \inf E\right] \ , \\
\text{if $\lambda \leq 0$: } \quad (1+\lambda \tau)^2 W_2^2(J_\tau \mu,J_\tau \nu)   &\leq W_2^2(\mu,\nu) + \tau^2 |\partial E|^2(\mu) \ .
\end{align*}
\end{thm}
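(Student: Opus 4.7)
The plan is to apply the discrete EVI (\ref{discevi}) twice --- once with base measure $\mu$ and test point $J_\tau \nu$, and once with base measure $\nu$ and test point $\mu$ --- and then combine, exploiting the symmetry $W_2(\mu, J_\tau \nu) = W_2(J_\tau \nu, \mu)$ to eliminate the uncontrolled cross-distance. The decrease estimate (\ref{theorem AGS1}) is then used to convert the resulting energy difference $2\tau[E(\mu) - E(J_\tau \mu)]$ into the metric-slope term $\tau^2 |\partial E|^2(\mu)$. The key qualitative feature I will exploit is that the desired bound is intentionally \emph{asymmetric} in $\mu$ and $\nu$ --- only $|\partial E|(\mu)$ appears --- so I will break symmetry by weighting the two EVI instances differently.

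For $\lambda > 0$, I would multiply the first EVI inequality by $(1+\lambda\tau)$ before substituting in the second; this is what produces the $(1+\lambda\tau)^2$ factor on the left-hand side. After applying (\ref{theorem AGS1}) and discarding a non-positive remainder of the form $-\lambda\tau\, W_2^2(\mu, J_\tau\mu)$, the residual cross-energy term $2\lambda\tau^2[E(J_\tau\nu) - E(J_\tau\mu)]$ is bounded above by $2\lambda\tau^2[E(\nu) - \inf E]$, using that $J_\tau\nu$ minimizes $\Phi(\tau,\nu;\cdot)$ (so $E(J_\tau\nu) \leq E(\nu)$, with $\nu$ as a competitor) together with the trivial bound $E(J_\tau\mu) \geq \inf E$. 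This follows the first three steps of AGS \cite[Lemma 4.2.4]{AGS} for this sign of $\lambda$.

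For $\lambda \leq 0$ I would instead simply sum the two EVI inequalities and control the leftover term $\lambda\tau\, W_2^2(J_\tau\nu,\mu)$, which is now non-positive, from below via the triangle inequality $W_2(J_\tau\nu,\mu) \leq W_2(J_\tau\nu,\nu) + W_2(\nu,\mu)$ combined with Young's inequality with weight $c = |\lambda|\tau/(1+\lambda\tau) > 0$. The point of this specific weight is that it makes the $W_2^2(\nu, J_\tau\nu)$ contributions cancel exactly, leaving a coefficient of $1/(1+\lambda\tau)$ in front of $W_2^2(\mu,\nu)$. Multiplying the resulting inequality by $(1+\lambda\tau)$ and using (\ref{theorem AGS1}) to absorb $2\tau[E(\mu) - E(J_\tau\mu)] - W_2^2(\mu,J_\tau\mu) \leq \frac{\tau^2}{1+\lambda\tau}|\partial E|^2(\mu)$ then gives the bound cleanly, with no $E(\nu) - \inf E$ correction needed.

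The main obstacle is bookkeeping: arranging the two very similar-looking manipulations so that the left-hand side is exactly $(1+\lambda\tau)^2 W_2^2(J_\tau\mu, J_\tau\nu)$ and the coefficient of $W_2^2(\mu,\nu)$ on the right is exactly $1$, with every other error term absorbed into $\tau^2|\partial E|^2(\mu)$ (and, for $\lambda > 0$, into the $E(\nu) - \inf E$ correction). The non-trivial insight is that the two sign regimes require genuinely different algebraic combinations: the multiplicative combination for $\lambda > 0$ and the additive-plus-Young combination for $\lambda \leq 0$, unified only by the shared use of (\ref{theorem AGS1}) at the end.
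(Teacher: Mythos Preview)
Your proposal is correct and matches the paper's proof essentially step for step: the paper likewise combines two instances of (\ref{discevi}), multiplying the first by $(1+\lambda\tau)$ before adding when $\lambda>0$, and for $\lambda\leq 0$ summing and then invoking the elementary inequality $(a+b)^2 \leq a^2/\epsilon + b^2/(1-\epsilon)$ with $\epsilon = -\lambda\tau$, which is exactly your Young-with-weight step in different clothing. One small attribution slip: it is the $\lambda\leq 0$ argument (not the $\lambda>0$ one) that the paper identifies with the first three steps of \cite[Lemma 4.2.4]{AGS}.
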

\begin{proof}
Throughout the proof we abbreviate $J_\tau \mu$ by $\mu_\tau$ and $J_\tau \nu$ by $\nu_\tau$. By inequality (\ref{discevi}),
\begin{align} \label{conpf1}
(1+\lambda \tau) W_2^2(\mu_\tau,\nu_\tau) - W_2^2(\mu, \nu_\tau) \leq 2 \tau [ E(\nu_\tau) - E(\mu_\tau) - (1/2\tau) W_2^2(\mu,\mu_\tau)\ \ , \\
 \label{conpf2}
(1+\lambda \tau) W_2^2(\nu_\tau,\mu) - W_2^2(\nu, \mu)  \leq 2 \tau [ E(\mu) - E(\nu_\tau) - (1/2 \tau) W_2^2(\nu,\nu_\tau) ] \ .
\end{align}
Suppose $\lambda >0$. Dropping $-\frac{1}{2 \tau} W_2^2(\nu,\nu_\tau)$ from (\ref{conpf2}), multiplying (\ref{conpf1}) by $(1+\lambda \tau)$, and adding the two together,
\begin{align*}
(1+\lambda \tau)^2 W_2^2(\mu_\tau,\nu_\tau) - W_2^2(\mu, \nu) &\leq 2 \tau \left[ (1+\lambda \tau) E(\nu_\tau) -  E(\nu_\tau) + E(\mu) - (1+\lambda \tau) \left[E(\mu_\tau) + (1/2 \tau) W_2^2(\mu,\mu_\tau) \right]\right]
\end{align*}
Since $\lambda >0$, $E$ is bounded below \cite[Lemma 2.4.8]{AGS}. Applying inequality (\ref{theorem AGS1}) and the fact that $E(\nu_\tau) \leq E(\nu)$,
\begin{align*}
(1+\lambda \tau)^2 W_2^2(\mu_\tau,\nu_\tau) - W_2^2(\mu, \nu) &\leq \tau^2 \left[\frac{1}{1+\lambda \tau} |\partial E|^2(\mu) +2   \lambda  [E(\nu_\tau)- \inf E] \right]
\leq \tau^2 [|\partial E|^2(\mu) + 2 \lambda \left[E(\nu) - \inf E\right] ] ,
\end{align*}
which gives the result.

Now suppose $\lambda \leq 0$. Adding (\ref{conpf1}) and (\ref{conpf2}) and applying inequality (\ref{theorem AGS1}),
\begin{align} \label{conpf3}
(1+\lambda \tau)W_2^2(\mu_\tau,\nu_\tau) -W_2^2(\nu, \mu) +\lambda \tau W_2^2(\nu_\tau, \mu)   &\leq 2 \tau \left[ E(\mu) - E(\mu_\tau) - (1/2 \tau) W_2^2(\mu,\mu_\tau)\right] - W_2^2(\nu,\nu_\tau) \nonumber \\
&\leq \frac{\tau^2}{1+\lambda \tau} |\partial E|^2(\mu) - W_2^2(\nu,\nu_\tau) \ .
\end{align}
For $a,b>0$ and $0 < \epsilon < 1$, the convex function $\phi(\epsilon) = a^2/\epsilon + b^2/(1-\epsilon)$ attains its minimum $(a+b)^2$ at $\epsilon =a/(a+b)$, hence $a^2/\epsilon + b^2/(1-\epsilon) \geq (a+b)^2 $.

Consequently, with $\epsilon = -\lambda \tau $, we obtain
\begin{align} \label{conpf4}
W_2^2(\nu_\tau, \mu) &\leq (W_2(\nu_\tau,\nu) + W_2(\nu,\mu))^2 \leq - W_2^2(\nu_\tau, \nu)/\lambda \tau +  W_2^2(\nu,\mu) /(1+\lambda \tau)\ .
\end{align}
Multiplying by $-\lambda \tau$, adding to (\ref{conpf3}), and multiplying by $(1+\lambda \tau)$, we obtain the result
\begin{align*}
(1+\lambda \tau)^2 W_2^2(\mu_\tau,\nu_\tau)   &\leq W_2^2(\mu,\nu) + \tau^2 |\partial E|^2(\mu)\ .
\end{align*}
\end{proof}

Iterating the inequalities in the above theorem and applying inequality (\ref{theorem AGS1}) gives the following corollary.
\begin{cor} \label{iterated cothm}
Suppose $E$ satisfies convexity assumption \ref{funas}, $\mu \in D(|\partial E|)$, and $\nu \in \overline{D(E)}$. Then we have the following inequalities for all $0< \tau< -\frac{1}{\lambda}$,
\begin{align} \label{iterated cothm1}
\text{if $\lambda > 0$: } \quad W_2^2(J^n_\tau \mu, J^n_\tau \nu)
&\leq (1+\lambda \tau)^{-2n} W_2^2(\mu,\nu) + n \tau^2 \left( |\partial E|^2( \mu) + 2 \lambda \left[E(\nu) - \inf E \right] \right) \ , \\
\text{if $\lambda \leq 0$: } \quad W_2^2(J^n_\tau \mu, J^n_\tau \nu) &\leq (1+\lambda \tau)^{-2n} W_2^2(\mu,\nu) + n \tau^2(1+\lambda \tau)^{-2n} |\partial E|^2( \mu) \ .  \label{iterated cothm2}
\end{align}
\end{cor}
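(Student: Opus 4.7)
The plan is to iterate Theorem~\ref{cothm} step by step, using inequality (\ref{theorem AGS1}) to track how the slope $|\partial E|(J^k_\tau \mu)$ decays along the sequence and (for $\lambda>0$) using the standard monotonicity $E(J_\tau \nu)\leq E(\nu)$ to bound the energy term uniformly. Set $\mu_k = J^k_\tau \mu$ and $\nu_k = J^k_\tau \nu$. Note first that Theorem~\ref{cothm} applies at every step: by (\ref{theorem AGS1}), $\mu_k\in D(|\partial E|)$ whenever $\mu\in D(|\partial E|)$, and clearly $\nu_k\in D(E)\subset\overline{D(E)}$. Moreover, from (\ref{theorem AGS1}) iterated,
\[
|\partial E|^2(\mu_k)\leq (1+\lambda\tau)^{-2k}|\partial E|^2(\mu),
\]
and from $\Phi(\tau,\nu_{k-1};\nu_k)\leq \Phi(\tau,\nu_{k-1};\nu_{k-1})$ we get $E(\nu_k)\leq E(\nu_{k-1})\leq\cdots\leq E(\nu)$.

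With $r:=(1+\lambda\tau)^{-2}$ and $a_k := W_2^2(\mu_k,\nu_k)$, Theorem~\ref{cothm} gives the linear recursion
\[
a_{k+1} \leq r\, a_k + r\, d_k,
\]
where $d_k = \tau^2|\partial E|^2(\mu_k)$ in the $\lambda\leq 0$ case and $d_k = \tau^2|\partial E|^2(\mu_k) + 2\lambda\tau^2[E(\nu_k)-\inf E]$ in the $\lambda>0$ case. Unwinding the recursion gives
\[
a_n \leq r^n a_0 + \sum_{k=0}^{n-1} r^{n-k}\, d_k.
\]

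For $\lambda\leq 0$, substitute the sharp slope bound $d_k\leq r^k\tau^2|\partial E|^2(\mu)$: the summand becomes $r^{n-k}\cdot r^k \tau^2|\partial E|^2(\mu)=r^n\tau^2|\partial E|^2(\mu)$, so the sum is exactly $n r^n \tau^2|\partial E|^2(\mu)$, yielding (\ref{iterated cothm2}). For $\lambda>0$, combine the crude bounds $|\partial E|^2(\mu_k)\leq |\partial E|^2(\mu)$ and $E(\nu_k)-\inf E \leq E(\nu)-\inf E$ (both valid because the relevant quantities are monotone decreasing in $k$) so that $d_k \leq d_0 := \tau^2|\partial E|^2(\mu)+2\lambda\tau^2[E(\nu)-\inf E]$. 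Since $r<1$ in this regime, $\sum_{k=0}^{n-1} r^{n-k} = \sum_{j=1}^{n} r^j \leq n$, and one obtains $a_n\leq r^n a_0 + n\, d_0$, which is exactly (\ref{iterated cothm1}).

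The routine part is the two summations; the only subtlety is choosing the right bound on $d_k$ in each case—keeping the sharp geometric factor $r^k$ when $\lambda\leq 0$ (so as to extract the common $r^n$ out front), but dropping it in favor of the uniform bound when $\lambda>0$ (where the energy term is not contractive and only the crude $\sum r^j\leq n$ is needed to match the stated form). No serious obstacle is anticipated beyond bookkeeping.
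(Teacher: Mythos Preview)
Your proof is correct and follows exactly the approach indicated by the paper, which merely states that the corollary follows from ``iterating the inequalities in the above theorem and applying inequality (\ref{theorem AGS1}).'' Your explicit bookkeeping with $r=(1+\lambda\tau)^{-2}$---keeping the geometric factor $r^k$ on the slope when $\lambda\le 0$ and using the crude bound $\sum_{j=1}^n r^j\le n$ when $\lambda>0$---is precisely the intended computation.
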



\subsection{Proximal Map with Large vs. Small Time Steps} \label{prox map time step section}

We now apply the Euler-Lagrange equation, Theorem \ref{elthm},  to relate the proximal map with a large time step $\tau$ to the proximal map with a small time step $h$.

\begin{thm} \label{proxmapdifftimethm}
Suppose $E$ satisfies convexity assumption \ref{funas}. Then if $\mu \in D(E)$ and ${0 < h \leq \tau< \frac{1}{\lambda^-}}$,
\[ J_\tau \mu = J_h \left[ \left( \frac{\tau-h}{\tau}
\mathbf{t}_\mu^{J_\tau \mu} + \frac{h}{\tau} \id \right) \# \mu \right] \]
\end{thm}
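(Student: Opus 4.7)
The plan is to identify both sides of the claimed equation with $\nu := J_\tau\mu$ via the Euler-Lagrange equation (Theorem \ref{elthm}). Setting $\tilde{\bt} := \frac{\tau-h}{\tau}\mathbf{t}_\mu^\nu + \frac{h}{\tau}\id$ and $\tilde{\mu} := \tilde{\bt}\#\mu$, the claim reduces to showing $J_h\tilde{\mu}=\nu$. The forward direction of Theorem \ref{elthm} applied to $J_\tau\mu$ gives $\frac{1}{\tau}(\mathbf{t}_\nu^\mu - \id) \in \partial E(\nu)$ as a strong subdifferential. I will verify that $\frac{1}{h}(\mathbf{t}_\nu^{\tilde{\mu}} - \id)$ equals exactly this element of $\partial E(\nu)$, so that the converse direction of Theorem \ref{elthm}, applied with initial measure $\tilde{\mu}$ and step $h$, yields $\nu = J_h\tilde{\mu}$.

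The key computation is to pin down the optimal transport maps in the triangle $\mu,\nu,\tilde{\mu}$. Since $\mathbf{t}_\mu^\nu$ is the gradient of a convex potential $\phi$ (Brenier) and $\id = \nabla(|\cdot|^2/2)$, the map $\tilde{\bt}$ is the gradient of the $\frac{h}{\tau}$-strongly convex potential $\frac{\tau-h}{\tau}\phi + \frac{h}{2\tau}|\cdot|^2$; in particular $\tilde{\bt}$ is cyclically monotone and therefore $\tilde{\bt} = \mathbf{t}_\mu^{\tilde{\mu}}$. Symmetrically, $\mathbf{s} := \frac{\tau-h}{\tau}\id + \frac{h}{\tau}\mathbf{t}_\nu^\mu$ is cyclically monotone. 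Using $\mathbf{t}_\nu^\mu \circ \mathbf{t}_\mu^\nu = \id$ $\mu$-a.e.\ (valid because both $\mu, \nu=J_\tau\mu \in D(E)$ avoid small sets), one computes
\[ \mathbf{s}\circ\mathbf{t}_\mu^\nu = \tfrac{\tau-h}{\tau}\mathbf{t}_\mu^\nu + \tfrac{h}{\tau}\id = \tilde{\bt} \quad \mu\text{-a.e.}, \]
so $\mathbf{s}\#\nu = \tilde{\bt}\#\mu = \tilde{\mu}$, and cyclical monotonicity forces $\mathbf{s} = \mathbf{t}_\nu^{\tilde{\mu}}$. Direct substitution then yields
\[ \frac{1}{h}\bigl(\mathbf{t}_\nu^{\tilde{\mu}} - \id\bigr) = \frac{1}{h}\cdot\frac{h}{\tau}\bigl(\mathbf{t}_\nu^\mu - \id\bigr) = \frac{1}{\tau}(\mathbf{t}_\nu^\mu - \id) \in \partial E(\nu), \]
as a strong subdifferential, completing the argument modulo hypothesis checking.

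The main technical point is verifying that Theorem \ref{elthm} can be invoked with initial measure $\tilde{\mu}$, which amounts to showing $\tilde{\mu} \in D(E)$ (Assumption \ref{small set assumption} then automatically gives that $\tilde{\mu}$ avoids small sets). This follows because $\tilde{\mu}$ lies on the generalized geodesic from $\mu$ to $\nu$ with base $\mu$ at parameter $\alpha = \frac{\tau-h}{\tau} \in [0,1)$, so the $\lambda$-convexity of $E$ along generalized geodesics, combined with $E(\mu),E(\nu)<\infty$, $0<\tau<1/\lambda^-$, and $W_{2,\mu}^2(\mu,\nu)=W_2^2(\mu,\nu)<\infty$, bounds $E(\tilde{\mu})<\infty$. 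The conceptual heart of the proof is the observation that $\tilde{\mu}$ is the unique point on this generalized geodesic at which the subdifferential condition characterizing $J_h$ at $\nu$ coincides exactly with the one characterizing $J_\tau$ at $\nu$—the rescaling $h/\tau$ in $\mathbf{t}_\nu^{\tilde{\mu}}-\id$ precisely cancels the factor of $1/h$.
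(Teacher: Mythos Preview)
Your proof is correct and follows essentially the same route as the paper: apply the Euler--Lagrange characterization (Theorem~\ref{elthm}) to $J_\tau\mu$ to obtain $\bxi=\frac{1}{\tau}(\bt_{\nu}^{\mu}-\id)\in\partial E(\nu)$ as a strong subdifferential, observe that $\frac{\tau-h}{\tau}\id+\frac{h}{\tau}\bt_\nu^\mu$ is cyclically monotone and pushes $\nu$ onto $\tilde\mu$, and then invoke the converse direction of Theorem~\ref{elthm} with step $h$. Your explicit verification that $\tilde\mu\in D(E)$ (via $\lambda$-convexity along the geodesic from $\mu$ to $\nu$) is a detail the paper leaves implicit, but otherwise the arguments coincide.
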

%
\begin{proof}[Proof of Theorem \ref{proxmapdifftimethm}]
For simplicity of notation, we prove the result in the case $E$ satisfies domain assumption \ref{small set assumption}. See Theorem \ref{proxmapdifftimethm general} for the general result. As in the previous proof, we abbreviate $J_\tau \mu$ by $\mu_\tau$.

By Theorem \ref{elthm}, $\bxi = \frac{1}{\tau}(\bt_{\mu_\tau}^\mu - \id ) \in \partial E(\mu_\tau)$
is a strong subdifferential. Since $h/\tau \leq 1$,
\begin{align} \label{proxmapeq1}
 (\id + h \bxi) = \left(\id + \frac{h}{\tau}(\bt_{\mu_\tau}^\mu - \id) \right) =
\left(\frac{\tau - h}{\tau} \id + \frac{h}{\tau} \bt_{\mu_\tau}^\mu \right) \ .
\end{align}
is cyclically monotone. Consequently, if we define $\nu =  (\id + h \bxi) \# \mu_\tau$,  then $ \bt_{\mu_\tau}^\nu =  \id + h \bxi$.
Rearranging shows $\frac{1}{h} (\bt_{\mu_\tau}^\nu - \id) = \bxi \in \partial E(\mu_\tau)$, so by a second application of Theorem \ref{elthm}, $\mu_\tau = \nu_h$.

We now rewrite $\nu$ as it appears in the theorem. By equation (\ref{proxmapeq1}), $(\id + h \bxi) =\left(\frac{\tau - h}{\tau} \id + \frac{h}{\tau} \bt_{\mu_\tau}^\mu \right) =  \left(\frac{\tau - h}{\tau}
\mathbf{t}_{\mu}^{\mu_\tau} + \frac{h}{\tau} \id \right) \circ \mathbf{t}_{\mu_\tau}^{\mu} $. Therefore, $\nu = (\id + h \bxi) \# \mu_\tau  =\left(\frac{\tau - h}{\tau} \mathbf{t}_{\mu}^{\mu_\tau} + \frac{h}{\tau}
\id \right) \# \mu$.
\end{proof}
After proving Theorem \ref{proxmapdifftimethm}, we discovered another proof of the same result, independently obtained by Jost and Mayer \cite{Jost, Mayer}. It is non-variational and quite different from the proof given above, and we hope our proof is of independent interest. 

\subsection{Asymmetric Recursive Inequality} \label{asymrecineqsec}
The following inequality bounds the Wasserstein distance between discrete gradient flow sequences with different time steps in terms of a convex combination of earlier elements of the sequences, plus a small error term. 
A fundamental difference between Crandall and Liggett's recursive inequality and our Theorem \ref{recineqthm} is that theirs involves the distance while ours involves the square distance. (This is a consequence of the fact that our contraction inequality Theorem \ref{cothm} involves the square distance plus error terms.) Therefore, where Crandall and Liggett were able to use the triangle inequality to control the distance to intermediate points on a geodesic in terms of the distance to its endpoints, we have to use the convexity of the square transport metric. The bulk of our proof is devoted to passing from the transport metric back to the Wasserstein metric.
\begin{thm} \label{recineqthm} Suppose $E$ satisfies convexity assumption \ref{funas}. Then if $\mu \in D(|\partial E|)$ and ${0 < h \leq \tau< \frac{1}{\lambda^-}}$,
\begin{align*}
&(1-\lambda^-h)^2 W_2^2(J_\tau^n \mu, J_h^m \mu) \\
&\quad \leq  \frac{h}{\tau} (1-\lambda^- \tau)^{-1} W_2^2(J^{n-1}_\tau \mu, J^{m-1}_h \mu)+ \frac{\tau -h}{\tau}W_{2}^2(J^{m-1}_h \mu,J^n_\tau \mu) + 2 h^2 (1-\lambda^- h)^{-2m}|\partial E|^2(\mu) \ .
\end{align*}
\end{thm}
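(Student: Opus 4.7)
Let $\mu_n := J_\tau^n\mu$ and $\nu_m := J_h^m\mu$. By Theorem \ref{proxmapdifftimethm}, $\mu_n = J_h\gamma$, where
\[
\gamma = \left(\tfrac{\tau-h}{\tau}\bt_{\mu_{n-1}}^{\mu_n} + \tfrac{h}{\tau}\id\right)\#\mu_{n-1}
\]
lies on the Wasserstein geodesic from $\mu_{n-1}$ to $\mu_n$ at parameter $(\tau-h)/\tau$. The plan is to mirror Crandall--Liggett's recursive estimate, with the Banach contraction replaced by the strong EVI Theorem \ref{sdiscevi} and the convex-combination/triangle inequality replaced by the explicit $L^2(\mu_{n-1})$ identity for the optimal transport maps issuing from $\mu_{n-1}$.

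\textbf{Step 1 (cancel the energy differences).} Apply Theorem \ref{sdiscevi} to $\mu_n = J_\tau\mu_{n-1}$ with test $\nu_{m-1}$, which produces a left-hand $W_{2,\mu_{n-1}}^2(\mu_n,\nu_{m-1})$ with prefactor $\eta := 1+\lambda\tau$. Weight this by $h/\tau$ and add the ordinary EVI \eqref{discevi} for $\nu_m = J_h\nu_{m-1}$ tested against $\mu_n$. The $E(\mu_n)$ contributions exactly cancel, leaving a residual $2h[E(\nu_{m-1})-E(\nu_m)]$ controlled by the middle inequality of \eqref{theorem AGS1},
\[
2h\bigl[E(\nu_{m-1})-E(\nu_m)\bigr] \le W_2^2(\nu_{m-1},\nu_m) + \frac{h^2}{1+\lambda h}|\partial E|^2(\nu_{m-1}).
\]
After cancelling the shared $W_2^2(\nu_{m-1},\nu_m)$ on both sides, I obtain an intermediate inequality of the form
\[
\frac{h}{\tau}\eta\,W_{2,\mu_{n-1}}^2(\mu_n,\nu_{m-1}) + (1+\lambda h)\,W_2^2(\mu_n,\nu_m) \le \frac{h}{\tau}W_2^2(\mu_{n-1},\nu_{m-1}) + W_2^2(\nu_{m-1},\mu_n) + \frac{h^2}{1+\lambda h}|\partial E|^2(\nu_{m-1}).
\]

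\textbf{Step 2 (transport metric to Wasserstein metric).} Use $W_{2,\mu_{n-1}}^2(\mu_n,\nu_{m-1})\ge W_2^2(\nu_{m-1},\mu_n)$ to absorb the left-hand transport-metric term into the right-hand Wasserstein term, reducing the coefficient of $W_2^2(\nu_{m-1},\mu_n)$ from $1$ to $1-(h/\tau)\eta$. For $\lambda\ge 0$, $1-(h/\tau)\eta = (\tau-h)/\tau - \lambda h \le (\tau-h)/\tau$, matching the statement (with $(1-\lambda^-\tau)^{-1} = 1$). For $\lambda<0$, one further multiplies through by the appropriate power of $(1+\lambda h)$ so that the LHS becomes $(1+\lambda h)^2 = (1-\lambda^-h)^2$ times $W_2^2$ and the RHS acquires the $(1-\lambda^-\tau)^{-1}$ and $(\tau-h)/\tau$ factors stated in the theorem; this is the delicate algebraic step.

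\textbf{Step 3 (propagate the slope).} Iterating the third inequality in \eqref{theorem AGS1} along $\nu_j = J_h^j\mu$ gives $|\partial E|(\nu_{m-1}) \le (1+\lambda h)^{-(m-1)}|\partial E|(\mu)$, so $\frac{h^2}{1+\lambda h}|\partial E|^2(\nu_{m-1}) \le h^2(1-\lambda^-h)^{-(2m-1)}|\partial E|^2(\mu) \le 2h^2(1-\lambda^-h)^{-2m}|\partial E|^2(\mu)$, yielding the stated error.

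The main obstacle is Step 2: the trade-off between the transport-metric prefactor $\eta = 1-\lambda^-\tau$ on the left and the Wasserstein metric on the right is precisely what produces the $(1-\lambda^-\tau)^{-1}$ dilation on the first right-hand term and the $(1-\lambda^-h)^2$ compression on the left-hand side, and matching the coefficients exactly requires careful algebra. This is also the reason the strengthened form of the discrete EVI in Theorem \ref{sdiscevi} is used, rather than the classical form \eqref{discevi} --- the sharp $\eta$ prefactor there is what makes the bookkeeping close.
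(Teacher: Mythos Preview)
Your Step~1 is correct and yields
\[
(1+\lambda h)\,W_2^2(\mu_n,\nu_m) \le \tfrac{h}{\tau}W_2^2(\mu_{n-1},\nu_{m-1}) + \bigl(1-\tfrac{h}{\tau}(1+\lambda\tau)\bigr)W_2^2(\nu_{m-1},\mu_n) + \tfrac{h^2}{1+\lambda h}|\partial E|^2(\nu_{m-1}),
\]
after absorbing $W_{2,\mu_{n-1}}^2\ge W_2^2$. For $\lambda\ge 0$ this already implies the statement (even with error $h^2$ rather than $2h^2$), since $1-\tfrac{h}{\tau}(1+\lambda\tau)=\tfrac{\tau-h}{\tau}-\lambda h\le \tfrac{\tau-h}{\tau}$. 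But for $\lambda<0$ the coefficient on $W_2^2(\nu_{m-1},\mu_n)$ is $\tfrac{\tau-h}{\tau}+\lambda^- h$, which \emph{exceeds} the target $\tfrac{\tau-h}{\tau}$ by $\lambda^- h$. Multiplying through by $(1-\lambda^- h)$ does not fix this: the coefficient becomes $\tfrac{\tau-h}{\tau}+h^2\lambda^-\bigl(\tfrac{1}{\tau}-\lambda^-\bigr)>\tfrac{\tau-h}{\tau}$. There is no mechanism in your argument to trade this excess against the slack in the $(1-\lambda^-\tau)^{-1}$ factor, since that would require an a~priori bound of the form $W_2^2(\nu_{m-1},\mu_n)\le C\,W_2^2(\mu_{n-1},\nu_{m-1})$, which is false in general. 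So Step~2, the ``delicate algebraic step'', does not close when $\lambda<0$.

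The paper's argument is genuinely different from yours and does not attempt to combine two EVIs directly. It first uses the identity $\mu_n=J_h\gamma$ (which you introduce but never use) together with the \emph{almost contraction inequality} of Theorem~\ref{cothm} to obtain $(1-\lambda^-h)^2W_2^2(\mu_n,\nu_m)\le W_2^2(\gamma,\nu_{m-1})+h^2|\partial E|^2(\nu_{m-1})$; the square prefactor $(1-\lambda^-h)^2$ appears here already, not from later manipulation. Then $W_2^2(\gamma,\nu_{m-1})$ is bounded by $W_{2,\mu_{n-1}}^2(\gamma,\nu_{m-1})$ and expanded along the geodesic via Proposition~\ref{convexity of pseudo metrics}, yielding the exact convex combination $\tfrac{h}{\tau}W_2^2(\mu_{n-1},\nu_{m-1})+\tfrac{\tau-h}{\tau}W_{2,\mu_{n-1}}^2(\nu_{m-1},\mu_n)$. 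The remaining work is to bound $W_{2,\mu_{n-1}}^2(\nu_{m-1},\mu_n)$ back in terms of $W_2^2$; this is where Theorem~\ref{sdiscevi} is applied twice and combined with the $\epsilon$-trick (cf.~\eqref{conpf4}) to produce the $(1-\lambda^-\tau)^{-1}$ dilation. In short: the geodesic point $\gamma$ and Theorem~\ref{cothm} are not decorative---they provide the correct coefficients from the start, and the strong EVI is used only in the back-conversion, not as a substitute for the contraction step.
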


To consider $\lambda \geq 0$ and $\lambda <0$ jointly, we replace $\lambda$ by $-\lambda^-$: any function that is $\lambda$ convex is also $-\lambda^-$ convex.

\begin{proof}
To simplify notation, we abbreviate $J^n_\tau \mu$ by $J^n$, $J^m_h \mu$ by $J^m$, and $[([\tau-h]/\tau)\bt_{J^{n-1}}^{J^n} + (h/\tau) \id] \# J^{n-1}$ by $\mu_{\frac{\tau-h}{\tau}}^{J^{n-1} \to J^n}$, since the latter is the geodesic from $J^{n-1}$ to $J^n$ at time $(\tau-h)/\tau$. With this, we have
\begin{align*}
(1-\lambda^- h)^2 W_2^2(J^n, J^m ) &= (1-\lambda^- h)^2  W_2^2( J_h
(\mu_{\frac{\tau-h}{\tau}}^{J^{n-1} \to J^n}) , J^m) &\text{by Theorem \ref{proxmapdifftimethm},}\\
&\leq W_2^2( \mu_{\frac{\tau-h}{\tau}}^{J^{n-1} \to J^n}
, J^{m-1} ) + h^2 |\partial E|^2(J^{m-1}) &\text{by Theorem \ref{cothm},} \\
&\leq W_{2, J^{n-1}} ^2( \mu_{\frac{\tau-h}{\tau}}^{J^{n-1} \to J^n}
, J^{m-1}) + h^2|\partial E|^2(J^{m-1}) &\text{by Remark \ref{transport metric bounds W2}}  .
\end{align*}
By Proposition \ref{convexity of pseudo metrics}, $W_{2, J^{n-1}}$ is convex along generalized geodesics with base $J^{n-1}$. In particular, it is convex along the geodesic $\mu_{\frac{\tau-h}{\tau}}^{J^{n-1} \to J^n}$, which gives
\begin{align} \label{asyrecpf1}
(1-\lambda^- h)^2 W_2^2(J^n, J^m ) &\leq \frac{h}{\tau} W_{2, J^{n-1}}^2(J^{n-1},J^{m-1}) + \frac{\tau -
h}{\tau}W_{2, J^{n-1}}^2(J^{m-1},J^n) + h^2|\partial E|^2(J^{m-1})\ .
\end{align}

By Remark \ref{transport metric bounds W2}, $W_{2, J^{n-1}}^2(J^{n-1},J^{m-1}) = W_{2}^2(J^{n-1},J^{m-1})$. To control the second term, we claim that
\begin{align} \label{asyrecpf2}
W_{2, J^{n-1}}^2(J^{m-1}, J^n) &\leq \frac{\tau}{h} \left(W_{2}^2(J^{m-1}, J^n) - (1-\lambda^- h) W_{2, J^{m-1}}^2(J^m, J^n) \right) \nonumber \\
&\quad + \frac{1}{1-\lambda^- \tau} W_2^2(J^{n-1},J^{m-1})+  
\frac{\tau h}{1-\lambda^- h} |\partial E|^2(J^{m-1}) \ .
\end{align}
Substituting (\ref{asyrecpf2}) into (\ref{asyrecpf1}), simplifying and rearranging,  and using $(1-\lambda^- h)^2 \leq (1-\lambda^- h)$ gives
\begin{align*}
(1-\lambda^- h)^2 (\tau/h)W_2^2(J^n , J^m ) \leq   \frac{1- \lambda^- h}{1-\lambda^- \tau} W_2^2( J^{n-1},J^{m-1})+ \frac{\tau -h}{h}W_{2}^2( J^{m-1},J^n) + \left[\frac{h(\tau - h)}{1- \lambda^- h} + h^2 \right] |\partial E|^2(J^{m-1}) \ .\end{align*} 
Multiplying by $h/\tau$ and using both $1-\lambda^- h  \leq 1$ and $|\partial E|^2(J^{m-1}) \leq (1-\lambda^-h)^{-2(m-1)}|\partial E|^2(\mu)$ gives the result
 \begin{align*}
(1-\lambda^- h)^2 W_2^2(J^n , J^m ) \leq \frac{h}{\tau} \frac{1}{1-\lambda^- \tau} W_2^2(J^{n-1},J^{m-1})+ \frac{\tau -h}{\tau}W_{2}^2(J^{m-1},J^n) + 2 h^2 (1-\lambda^-h)^{-2m} |\partial E|^2(\mu) \ .
\end{align*}

It remains to show (\ref{asyrecpf2}). Replacing $(\mu,\nu)$ in Theorem \ref{sdiscevi} with $(J^{m-1}, J^n)$ and $( J^{n-1},  J^{m-1})$ gives
\begin{align}
(1-\lambda^- h)W_{2,J^{m-1}}^2(J^m, J^n)-W_2^2(J^{m-1},J^n)  &\leq 2h \left[ E(J^n)- E(J^m) - W_2^2(J^{m-1},J^m)/(2h) \right] \ , \label{asympf1} \\
(1-\lambda^- \tau) W_{2,J^{n-1}}^2(J^n, J^{m-1})-W_2^2(J^{n-1},J^{m-1})  &\leq 2 \tau \left[ E(J^{m-1})- E(J^n) - W_2^2(J^{n-1},J^n)/(2 \tau) \right] \ . \label{asympf2}
\end{align} 
Multiplying (\ref{asympf1}) by $\tau$, (\ref{asympf2}) by $h$, adding them together, and applying inequality (\ref{theorem AGS1}) gives
\begin{align}\label{mixedfimestepsvarineq} 
&\tau (1-\lambda^- h) W_{2,J^{m-1}}^2(J^m, J^n) + h (1-\lambda^- \tau) W_{2, J^{n-1}}^2(J^n, J^{m-1})\nonumber \\
&\quad \leq \tau W_2^2(J^{m-1},J^n) + h W_2^2(J^{n-1},J^{m-1}) + 2 \tau h \left[E(J^{m-1}) - E(J^m) -W_2^2(J^{m-1},J^m) /(2h)\right] - h W_2^2(J^{n-1},J^n)  \nonumber \\
&\quad \leq \tau W_2^2(J^{m-1},J^n) + h W_2^2(J^{n-1},J^{m-1}) + \frac{\tau h^2}{1 - \lambda^- h} |\partial E|^2(J^{m-1})  - hW_2^2(J^{n-1},J^n)\ .
\end{align}
As in equation (\ref{conpf4}) we have, $\lambda^- \tau W_{2,J^{n-1}}^2(J^{m-1},J^n) \leq W_2^2(J^n, J^{n-1}) + \frac{ \lambda^- \tau}{1-\lambda^- \tau} W_2^2(J^{n-1}, J^{m-1})$.
Multiplying by $h$ and adding to (\ref{mixedfimestepsvarineq}) gives
\begin{align*}
&\tau (1-\lambda^- h) W_{2,J^{m-1}}^2(J^m, J^n) + h  W_{2, J^{n-1}}^2(J^n, J^{m-1})\nonumber \\
&\quad \leq \tau W_2^2(J^{m-1},J^n) + \frac{h}{1-\lambda^- \tau} W_2^2(J^{n-1},J^{m-1}) + \frac{\tau h^2}{1 - \lambda^- h} |\partial E|^2(J^{m-1})  \ .
\end{align*}
Rearranging and dividing by $h$ gives the desired bound (\ref{asyrecpf2}).

\end{proof}

\subsection{Distance Between Discrete Gradient Flows with Different Time Steps} \label{rasindsec}
The following bound controls the distance between discrete gradient flow sequences with time steps $\tau$ and $h$. It is inspired by Rasmussen's simplification of Crandall and Liggett's method \cite{R,Y}. Unlike in the Banach space case, we work with the square distance instead of the distance itself. While this complicated matters in the previous theorem, in simplifies the induction in the following theorem.

We begin with the base case of the induction, bounding the distance between the $0$th and $n$th terms.
\begin{lem} \label{lingrowthm}
Suppose $E$ satisfies convexity assumption \ref{funas}. Then if $\mu \in D(|\partial E|)$ and $0< \tau< \frac{1}{\lambda^-}$,
\begin{align*}
W_2(J^n_\tau \mu, \mu) &\leq \frac{n \tau}{(1-\tau \lambda^-)^n} |\partial E(\mu)|
\end{align*}
\end{lem}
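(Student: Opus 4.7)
The plan is to prove the lemma by induction on $n$, using only the two finer estimates packaged in inequality (\ref{theorem AGS1}): namely $W_2(\mu, J_\tau \mu) \leq \tau |\partial E|(\mu)/(1+\lambda\tau)$ and the contraction of the metric slope $|\partial E|(J_\tau \mu) \leq |\partial E|(\mu)/(1+\lambda\tau)$. Since $\lambda \geq -\lambda^-$, the factor $1/(1+\lambda\tau)$ is bounded above by $1/(1-\lambda^- \tau)$, which is exactly the denominator that appears in the statement, so both estimates give the required $1/(1-\lambda^- \tau)$ weighting.

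The base case $n=1$ is then the first of these estimates. For the induction step, I would write
\[
W_2(J_\tau^{n+1} \mu, \mu) \leq W_2(J_\tau^{n} (J_\tau \mu), J_\tau \mu) + W_2(J_\tau \mu, \mu)
\]
by the triangle inequality, and apply the induction hypothesis with base point $J_\tau \mu$ (which again lies in $D(|\partial E|)$ by inequality (\ref{theorem AGS1})) to bound the first term by $n\tau/(1-\tau\lambda^-)^n \cdot |\partial E|(J_\tau \mu)$. Applying the slope contraction $|\partial E|(J_\tau \mu) \leq |\partial E|(\mu)/(1-\tau\lambda^-)$ promotes the exponent to $n+1$. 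The second term is bounded by $\tau/(1-\tau\lambda^-) \cdot |\partial E|(\mu) \leq \tau/(1-\tau\lambda^-)^{n+1} \cdot |\partial E|(\mu)$ since $(1-\tau\lambda^-)\leq 1$. Adding the two contributions gives
\[
W_2(J_\tau^{n+1} \mu, \mu) \leq \frac{(n+1)\tau}{(1-\tau\lambda^-)^{n+1}} |\partial E|(\mu),
\]
which closes the induction.

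There is no real obstacle here: the lemma is essentially a telescoping application of the one-step bounds in (\ref{theorem AGS1}), combined with the monotonicity of $|\partial E|$ along the discrete gradient flow. The only point worth being careful about is the uniform replacement of $1+\lambda\tau$ by $1-\lambda^-\tau$, so that the same inequality covers both the $\lambda \geq 0$ and the $\lambda < 0$ regimes, and the verification that each $J_\tau^k \mu$ remains in $D(|\partial E|)$ for the inductive step to make sense.
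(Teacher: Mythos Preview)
Your proof is correct and is essentially the same as the paper's: both use the one-step bound $W_2(\mu,J_\tau\mu)\leq \tau|\partial E|(\mu)/(1+\lambda\tau)$ and the slope contraction $|\partial E|(J_\tau\mu)\leq |\partial E|(\mu)/(1+\lambda\tau)$ from (\ref{theorem AGS1}), together with the replacement $1/(1+\lambda\tau)\leq 1/(1-\lambda^-\tau)$. The only cosmetic difference is that the paper writes the argument as a direct telescoping sum $W_2(J_\tau^n\mu,\mu)\leq\sum_{i=1}^n W_2(J_\tau^i\mu,J_\tau^{i-1}\mu)$ and then bounds each summand, whereas you phrase the same computation as an induction; unwinding your induction reproduces exactly the paper's sum.
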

\begin{proof}
This follows from the triangle inequality, (\ref{theorem AGS1}), and the inequalities $\frac{1}{1 + \tau \lambda }~\leq~\frac{1}{1-\tau \lambda^- }$ and $1\leq \frac{1}{1-\tau \lambda^- }$.
\begin{align*}
W_2(J^n_\tau \mu, \mu) \leq \sum_{i=1}^n W_2(J_\tau^i \mu, J^{i-1}_\tau
\mu) \leq \sum_{i=1}^n \frac{\tau}{1+\tau \lambda} |\partial E(J_\tau^{i-1} \mu)| \leq \sum_{i=1}^n \frac{\tau}{(1+\tau \lambda)^i} |\partial E(\mu)| \leq \frac{n \tau}{(1-\tau \lambda^-)^n} |\partial E(\mu)| \ .
\end{align*}
\end{proof}
 
\begin{thm} \label{W2RasBoundThm} Suppose $E$ satisfies convexity assumption \ref{funas}. Then if $\mu \in D(|\partial E|)$ and $0 < h \leq \tau< \frac{1}{\lambda^-}$,
\begin{align} \label{W2RasBound}
W_2^2(J_\tau^n \mu, J_h^m \mu) \leq \left[(n\tau - mh)^2 + \tau h m +
2 \tau^2 n \right] (1-\lambda^- \tau)^{-2n} (1- \lambda^- h)^{-2m} |\partial E|^2(\mu) \ .
\end{align}
\end{thm}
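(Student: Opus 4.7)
The plan is to establish the bound by double induction, using Theorem \ref{recineqthm} as the recursive engine and Lemma \ref{lingrowthm} to supply base cases. Write the claimed right-hand side as
$$B(n,m) = a_{n,m}\,(1-\lambda^-\tau)^{-2n}(1-\lambda^-h)^{-2m}|\partial E|^2(\mu), \qquad a_{n,m} := (n\tau-mh)^2 + \tau h m + 2\tau^2 n,$$
and show $W_2^2(J_\tau^n\mu, J_h^m\mu) \le B(n,m)$ by induction on $n+m$.

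First I would dispatch the base cases $n=0$ and $m=0$, where one iterate collapses to $\mu$. Lemma \ref{lingrowthm} gives $W_2^2(\mu, J_h^m\mu) \le m^2 h^2 (1-\lambda^-h)^{-2m}|\partial E|^2(\mu)$ and the symmetric bound on $W_2^2(J_\tau^n\mu, \mu)$; since $a_{0,m} = m^2 h^2 + \tau h m \ge m^2 h^2$ and $a_{n,0} = n^2\tau^2 + 2\tau^2 n \ge n^2\tau^2$, both base cases follow at once.

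For the inductive step with $n,m \ge 1$, the idea is to feed the induction hypotheses at $(n-1,m-1)$ and $(n,m-1)$ into the recursive inequality of Theorem \ref{recineqthm}. Factoring out the common weight $(1-\lambda^-\tau)^{-2n}(1-\lambda^-h)^{-2m}|\partial E|^2(\mu)$ and dividing by $(1-\lambda^-h)^2$ makes the exponential factors line up; the only subtlety in this bookkeeping step is to bound the residual $2h^2(1-\lambda^-h)^{-2m}|\partial E|^2(\mu)/(1-\lambda^-h)^2$ by $2h^2 \cdot (1-\lambda^-\tau)^{-2n}(1-\lambda^-h)^{-2m}|\partial E|^2(\mu)$, which reduces to the elementary estimate $(1-\lambda^-\tau)^{2n} \le (1-\lambda^-h)^2$ that follows from $h \le \tau$ and $n \ge 1$. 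After also using $(1-\lambda^-\tau) \le 1$ to simplify the weight on the $(n-1,m-1)$ term, everything reduces to the purely algebraic inequality
$$\frac{h}{\tau}\, a_{n-1,m-1} + \frac{\tau-h}{\tau}\, a_{n,m-1} + 2h^2 \le a_{n,m}.$$

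The main obstacle, and the heart of the argument, is verifying this algebraic inequality and understanding why the particular constants $\tau h m$ and $2\tau^2 n$ in the definition of $a_{n,m}$ are forced. Setting $\delta := n\tau - mh$, one has $(n-1)\tau - (m-1)h = \delta - (\tau-h)$ and $n\tau - (m-1)h = \delta + h$; convex-combining with weights $h/\tau$ and $(\tau-h)/\tau$ preserves the $\delta^2$ coefficient, causes the $\delta$-linear cross terms to cancel exactly, and leaves a residual $h(\tau-h)$ from the quadratic parts. The lower-order pieces contribute $\tau h m - \tau h$ and $2\tau^2 n - 2\tau h$, so the left-hand side equals $a_{n,m} - 2\tau h - h^2 + 2h^2 = a_{n,m} - h(2\tau - h)$, and the inequality closes because $h \le \tau$. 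In this bookkeeping one sees why $2\tau^2 n$ is present precisely to absorb the $2h^2$ residual from Theorem \ref{recineqthm}, and why the asymmetry $\tau h m$ (in $m$ only) reflects the asymmetry of the recursive inequality itself.
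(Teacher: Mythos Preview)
Your proposal is correct and follows essentially the same approach as the paper: both argue by induction with base cases supplied by Lemma~\ref{lingrowthm}, feed the inductive hypotheses into the asymmetric recursive inequality of Theorem~\ref{recineqthm}, handle the exponential weights via $(1-\lambda^-\tau)^{-2n+1}\le(1-\lambda^-\tau)^{-2n}$ and $(1-\lambda^-h)^{-2}\le(1-\lambda^-\tau)^{-2n}$, and then verify the same purely algebraic inequality on the polynomial coefficients. The only cosmetic difference is indexing: the paper phrases the step as ``$(n-1,m)$ and $(n,m)$ imply $(n,m+1)$'' whereas you shift $m$ by one and organize it as induction on $n+m$.
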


\begin{proof}
We proceed by induction. The base case, when either $n=0$ or $m=0$, follows from  Lemma \ref{lingrowthm}. We assume the inequality holds for
$(n-1,m)$ and $(n,m)$ and show that this implies it holds for $(n, m+1)$. 

By Theorem \ref{recineqthm},
\begin{align*}
&(1-\lambda^- h)^{2}W_2^2(J_\tau^n \mu, J_h^{m+1} \mu) \\
&\leq  \frac{h}{\tau} (1-\lambda^- \tau)^{-1} W_2^2(J_\tau^{n-1} \mu, J^{m}_h \mu) + \frac{\tau-
h}{\tau} W_2^2(J^m_h \mu,J^n_\tau \mu) +  2h^2(1-\lambda^- h)^{-2(m+1)} |\partial E|^2(\mu) \ .
\end{align*}
Dividing by $(1-\lambda^- h)^{2}$ and applying the inductive hypothesis,
\begin{align*}
W_2^2(J_\tau^n \mu, J_h^{m+1} \mu) &\leq   \frac{h}{\tau}  \left[((n-1)\tau - mh)^2 + \tau h m+ 2\tau^2 (n-1) \right]  (1-\lambda^- \tau)^{-2(n-1)-1}(1-\lambda^- h)^{-2(m+1)} |\partial E|^2(\mu) \\
&\quad + \frac{\tau-h}{\tau} \left[(n\tau - mh)^2 + \tau h m + 2 \tau^2 n \right] (1-\lambda^- \tau)^{-2n}(1-\lambda^- h)^{-2(m+1)} |\partial E(\mu)|^2 \\
&\quad +2h^2(1-\lambda^- h)^{-2(m+1)-2}   |\partial E|^2(\mu) \ .
\end{align*}
To control the first term, note that $ (1-\lambda^- \tau)^{-2(n-1)-1}=(1-\lambda^- \tau)^{-2n+1} <  (1-\lambda^- \tau)^{-2n}$ and 
\begin{align*}
 \left[((n-1)\tau - mh)^2 + \tau h m+ 2 \tau^2 (n-1) \right] = \left[ (n\tau - mh)^2 - 2(n \tau - mh)\tau + \tau^2 + \tau h m+ 2 \tau^2 (n-1) \right] \ .
\end{align*}
To control the third term, note that $(1-\lambda^- h)^{-2} \leq (1-\lambda^- \tau)^{-2} \leq (1-\lambda^- \tau)^{-2n}$.
Using these estimates, we may group together the three terms and obtain the following bound.
\begin{align*}
 &W_2^2(J_\tau^n \mu, J_h^{m+1} \mu) \\
 &\leq \left\{ \frac{h}{\tau} \left[ (n\tau - mh)^2 - 2(n \tau - mh)\tau +
\tau^2 + \tau h m+ 2\tau^2(n-1) \right]+ \frac{\tau-h}{\tau} \left[(n\tau -
mh)^2 + \tau h m+ 2\tau^2 n \right] +2h^2 \right\} \\
&\quad \cdot (1-\lambda^- \tau)^{-2n}(1-\lambda^- h)^{-2(m+1)}|\partial E|^2(\mu) \ .
\end{align*}
Simplifying and bounding the quantity within the brackets gives the result,
\begin{align*}
& \frac{h}{\tau} \left[ (n\tau - mh)^2 - 2(n \tau - mh)\tau +
\tau^2 + \tau h m+ 2 \tau^2(n-1)\right]+ \frac{\tau-h}{\tau} \left[(n\tau -
mh)^2 + \tau h m+ 2\tau^2 n \right] +2h^2 \\
&=\frac{h}{\tau} \left[ (n\tau - mh)^2 + \tau h m+ 2 \tau^2 n \right]+ \frac{\tau-h}{\tau} \left[(n\tau -mh)^2 + \tau h m+ 2\tau^2 n \right] + \frac{h}{\tau} \left[- 2(n \tau - mh)\tau -\tau^2\right]  +2h^2 \\
&=\left[ (n\tau - mh)^2 + \tau h m+ 2 \tau^2 n \right]- 2(n \tau - mh)h -\tau h  +2h^2 \\
&=(n \tau - m h)^2 - 2(n \tau - mh) h +\tau h m - \tau h + 2 \tau^2 n + 2 h^2 \\
&=(n \tau - (m+1)h)^2  + \tau h m - \tau h +2 \tau^2 n +h^2\\
&\leq (n \tau - (m+1)h)^2 + \tau h (m+1) + 2 \tau^2 n \ .
\end{align*}
\end{proof}

\subsection{Exponential Formula for the Wasserstein Metric} \label{exp form subsection}
We now combine our previous results to conclude the exponential formula for the Wasserstein metric. 
We prove the quantitative bound $W_2(J_{t/n}^n \mu , \mu(t)) \leq O(n^{-1/2} )$, which agrees with the rate Crandall and Liggett obtained in the Banach space case\cite{CrandallLiggett}. By a different method, Ambrosio, Gigli, and Savar\'e showed ${W_2(J_{t/n}^n \mu ,\mu(t)) \leq O(n^{-1})}$ \cite[Theorem 4.0.4]{AGS}, which agrees with the optimal rate in a Hilbert space \cite{Rulla}.
Our rate improves upon the rate obtained by Cl\'ement and Desch  \cite{ClementDesch}, $d(J_{t/n}^n \mu , \mu(t))~\leq~O(n^{-1/4} )$, who also pursued a Crandall and Liggett type approach, though they considered the more general case of a metric space $(X,d)$ that satisfies inequality (\ref{discevi}), with $W_2$ replaced by $d$.

Though we do not obtain the optimal rate of convergence, we demonstrate that Crandall and Liggett's approach extends to the Wasserstein metric, providing a simple and robust route to the exponential formula and properties of continuous gradient flow. In particular, it is hoped that this method may be used to study the behavior of the gradient flow as the functional $E$ varies---for example, as a regularization of $E$ is removed. The corresponding problem in the Banach space case is well-understood \cite{BrezisPazy}, and the analogy we establish between the Banach and Wasserstein cases may help extend these results.

\begin{thm}[exponential formula] \label{exp form thm}
Suppose $E$ satisfies convexity assumption \ref{funas}. For ${\mu \in \overline{D(E)}}$, $t \geq 0$, the discrete gradient flow sequence $J^n_{t/n} \mu$ converges as $n \to \infty$. The convergence is uniform in $t$ on compact subsets of $[0, +\infty)$, and the limit $\mu(t)$ is the gradient flow of $E$ with initial conditions $\mu$ in the sense of Definition \ref{gradflowdef}. When $\mu \in D(|\partial E|)$ and $n > 2 \lambda^-t$, the distance between $J^n_{t/n}$ and $\mu(t)$ is bounded by
\begin{align}\label{errest}
 W_2( J^n_{t/n}\mu, \mu(t)) \leq \sqrt{3} \frac{t}{\sqrt{n}}  e^{3 \lambda^- t} |\partial E|(\mu) \ .
 \end{align}
\end{thm}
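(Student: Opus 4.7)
The plan is to mirror Crandall and Liggett's Banach-space argument in four stages: derive a Cauchy estimate for regular initial data via the asymmetric bound of Theorem~\ref{W2RasBoundThm}, extend to $\mu \in \overline{D(E)}$ by density using the almost-contraction Corollary~\ref{iterated cothm}, identify the limit as the gradient flow by passing to the limit in the discrete EVI~(\ref{discevi}), and read off the uniform convergence from the explicit rate.

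For $\mu \in D(|\partial E|)$, I would set $\tau = t/n$ and $h = t/m$ with $n \leq m$, so that $h \leq \tau$ and the $(n\tau - mh)^2$ term in Theorem~\ref{W2RasBoundThm} vanishes because $n\tau = mh = t$. The bound then collapses to
\begin{align*}
W_2^2(J^n_{t/n}\mu, J^m_{t/m}\mu) \leq \frac{3t^2}{n}\,(1-\lambda^- t/n)^{-2n}(1-\lambda^- t/m)^{-2m}|\partial E|^2(\mu).
\end{align*}
For $n > 2\lambda^- t$ the elementary estimate $-\ln(1-x) \leq x(1+x) \leq \tfrac{3}{2}x$ on $[0,1/2]$ gives $(1-\lambda^- t/n)^{-n} \leq e^{3\lambda^- t/2}$ and analogously for $m$, combining to produce $e^{6\lambda^- t}$ inside the square. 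Taking the square root and letting $m \to \infty$ yields both the quantitative estimate~(\ref{errest}) and the Cauchy property; I call the limit $\mu(t)$.

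For $\mu \in \overline{D(E)}$ I would approximate by $\mu_k := J_{1/k}\mu \in D(|\partial E|)$, with $\mu_k \to \mu$ in $W_2$ by standard properties of the proximal map and $|\partial E|(\mu_k)$ controlled by the chain~(\ref{theorem AGS1}). Combining the Cauchy estimate for $\mu_k$ with Corollary~\ref{iterated cothm} produces, after the triangle inequality,
\begin{align*}
W_2(J^n_{t/n}\mu, J^{n'}_{t/n'}\mu) \leq C(t)\,W_2(\mu,\mu_k) + O(t/\sqrt n) + O(t/\sqrt{n'}),
\end{align*}
where $C(t)$ is continuous in $t$; sending $n,n' \to \infty$ and then $k \to \infty$ gives convergence, with the limit $\mu(t)$ independent of the approximating sequence. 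To identify $\mu(t)$ with the gradient flow of Definition~\ref{gradflowdef}, I would iterate~(\ref{discevi}) along $J^k_{t/n}\mu$ for $k = 0,\dots,n-1$, multiply by $\tau = t/n$, and telescope; the resulting discrete integral inequality passes to the limit using lower semicontinuity of $E$ and the already-established convergence of $J^n_{t/n}\mu$, producing the integrated form of~(\ref{continuous evi}) on any $(s,t)$. Local absolute continuity of $\mu(t)$ follows from~(\ref{errest}) and Lemma~\ref{lingrowthm}, and uniform convergence on compact subsets of $[0,+\infty)$ is immediate from the continuity of the rate in $t$.

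The main obstacles I anticipate are twofold. The first is the sharp bookkeeping of the exponential factors $(1-\lambda^- t/n)^{-2n}(1-\lambda^- t/m)^{-2m}$ required to land on $e^{3\lambda^- t}$ rather than the $e^{4\lambda^- t}$ produced by the cruder bound $(1-x)^{-1} \leq e^{2x}$; this relies on the tighter concave estimate above. The second is the density argument: when $\mu \in \overline{D(E)} \setminus D(|\partial E|)$, $|\partial E|(\mu_k)$ may blow up as $k\to\infty$, so the contraction factor $(1+\lambda\tau)^{-n}$ of Corollary~\ref{iterated cothm} must be balanced against the $\sqrt{n}\,\tau$ error term, requiring one to let $k=k(n)$ grow slowly enough that the $O(t/\sqrt n)$ rate survives.
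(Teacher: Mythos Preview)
Your plan is correct and follows the paper's proof essentially step for step: Cauchy estimate from Theorem~\ref{W2RasBoundThm}, density via Corollary~\ref{iterated cothm} and $\mu_\tau = J_\tau\mu$, and passage to the limit in the iterated discrete EVI~(\ref{discevi}).

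Both of the obstacles you flag dissolve on closer reading. For the constant $e^{3\lambda^- t}$, the paper does not use a sharper logarithm bound; it simply sends $m\to\infty$ in the \emph{first} inequality of the Cauchy estimate, so that $(1-\lambda^- t/m)^{-2m}\to e^{2\lambda^- t}$ exactly, and then applies the crude bound $(1-\alpha)^{-1}\le e^{2\alpha}$ only to the remaining $n$-factor, giving $e^{4\lambda^- t}\cdot e^{2\lambda^- t}=e^{6\lambda^- t}$ under the square. Your tighter inequality $-\ln(1-x)\le \tfrac32 x$ also works, but it is not needed. For the density step, note that the rate~(\ref{errest}) is asserted \emph{only} for $\mu\in D(|\partial E|)$; for $\mu\in\overline{D(E)}$ the theorem claims bare convergence, so there is no rate to preserve and hence no need to couple $k$ with $n$. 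The paper's argument is the plain $\varepsilon/3$ one: fix $k$ (equivalently $\tau$) to make the contraction term small, then take $n,m$ large to kill both the Cauchy term for $\mu_k$ and the $n\tau^2|\partial E|^2(\mu_k)$ error from Corollary~\ref{iterated cothm}.
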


\begin{remark}[varying time steps]
For any partition of the interval $[0,t]$ into $n$ time steps $\tau_1, \dots, \tau_n$, the corresponding discrete gradient flow with varying time steps $\Pi_{i=1}^n J_{\tau_i} \mu$ converges to $\mu(t)$ as the maximum time step goes to zero. See Theorem \ref{exp form thm var time}.
\end{remark}

We may apply our estimates to give shorts proofs of several known properties of the gradient flow \cite[Theorem 2.4.15, Proposition 4.3.1, Corollary 4.3.3]{AGS}. Our proofs merely use the fact that $\mu(t) = \lim_{n\to \infty} J_{t/n}^n \mu$, not that $\mu(t)$ is a solution to the gradient flow in the sense of Definition \ref{gradflowdef}. 

\begin{thm} \label{contracting semigroup 1}
Suppose $E$ satisfies convexity assumption \ref{funas}. Then the function $S(t)$ on $(0,+\infty)$, $S(t): \overline{D(E)} \to D(|\partial E|) : \mu \mapsto \mu(t)$  is a $\lambda$-contracting semigroup, i.e.
\begin{enumerate}[(i)]
	\item $\lim_{t\to 0} S(t) \mu = S(0) \mu = \mu$,
	\item $S(t+s) = S(t) S(s) \mu$ for $t, s \geq 0$,
	\item $W_2(S(t)\mu,S(t)\nu) \leq e^{-\lambda t} W_2(\mu,\nu)$.
\end{enumerate}
Furthermore, $\mu(t)$ is locally Lipschitz on $(0,+\infty)$ and if $\mu \in D(|\partial E|)$,
\begin{align} \label{time Lipschitz}
W_2(\mu(t), \mu(s)) \leq |t-s| e^{\lambda^- t} e^{ \lambda^- s} |\partial E|(\mu) \ .
\end{align}
Finally, an energy dissipation inequality holds,
\begin{align} \label{energydissipationineq}
 \int_{t_0}^{t_1} |\partial E|^2(\mu(s)) ds \leq E(\mu(t_0)) - E( \mu(t_1)) \ .
 \end{align}
\end{thm}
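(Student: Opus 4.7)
The overall strategy is to extract all five statements from the exponential formula $S(t)\mu = \lim_n J_{t/n}^n\mu$ of Theorem \ref{exp form thm} together with the discrete estimates already established in Sections \ref{first section}--\ref{contraction inequality section}, exactly as the author announces. Part (i) follows by substituting $\tau = t/n$ into Lemma \ref{lingrowthm}, giving $W_2(J_{t/n}^n\mu, \mu) \leq t(1-\lambda^- t/n)^{-n}|\partial E|(\mu)$; passing to the limit yields $W_2(S(t)\mu,\mu) \leq t\, e^{\lambda^- t}|\partial E|(\mu)$, which tends to $0$ as $t \to 0^+$ for $\mu \in D(|\partial E|)$. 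The extension to arbitrary $\mu \in \overline{D(E)}$ uses (iii) together with the density of $D(|\partial E|)$ in $\overline{D(E)}$, itself a consequence of (\ref{theorem AGS1}): $J_\tau\mu \in D(|\partial E|)$ and $W_2(J_\tau\mu,\mu) \to 0$ as $\tau \to 0$.

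For the contraction (iii), the plan is to take $\tau = t/n$ in Corollary \ref{iterated cothm} and let $n \to \infty$. The factor $(1+\lambda t/n)^{-2n}$ converges to $e^{-2\lambda t}$, while every error term is proportional to $n\tau^2 = t^2/n$ and therefore vanishes. This proves contraction first for $\mu \in D(|\partial E|)$ and, when $\lambda > 0$, for $\nu \in D(E)$; lower semicontinuity of $W_2$ combined with the density above extends the inequality to all of $\overline{D(E)}$.

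The semigroup property (ii) is the main obstacle, because a direct comparison of $J_{(t+s)/n}^n \mu$ with the composition $J_{s/n}^n J_{t/n}^n \mu$ pairs grids with incommensurable time steps. My plan is to invoke the varying time step exponential formula of the remark (Theorem \ref{exp form thm var time}): the composed iteration $J_{s/n}^n J_{t/n}^n \mu$ is a $2n$-step discretization of $[0,t+s]$ starting at $\mu$ with maximum step $\max(t,s)/n \to 0$, so its limit is $S(t+s)\mu$; simultaneously, $J_{t/n}^n\mu \to S(t)\mu$, and the uniform continuity of $J_{s/n}^n$ in the starting measure afforded by the contraction (iii) already in hand forces the same composition to converge to $S(s)S(t)\mu$. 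Equating the two limits gives (ii).

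The bound (\ref{time Lipschitz}) then follows by writing $\mu(t) = S(t-s)\mu(s)$ via (ii) and applying (i) at $\mu(s)$, giving $W_2(\mu(t),\mu(s)) \leq (t-s)\, e^{\lambda^-(t-s)}|\partial E|(\mu(s))$, while the slope at $\mu(s)$ is controlled by iterating the sharp contraction $|\partial E|(J_\tau\mu) \leq (1+\lambda\tau)^{-1}|\partial E|(\mu)$ from (\ref{theorem AGS1}) with $\tau = s/n$ and using lower semicontinuity of the metric slope to pass to $|\partial E|(\mu(s)) \leq e^{-\lambda s}|\partial E|(\mu) \leq e^{\lambda^- s}|\partial E|(\mu)$. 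Finally, for (\ref{energydissipationineq}), I would rearrange (\ref{theorem AGS1}) into the pointwise form $\tau(1+\lambda\tau/2)|\partial E|^2(J_\tau\mu) \leq E(\mu) - E(J_\tau\mu)$, telescope it over the $n$-step discrete flow on $[t_0,t_1]$ starting at $\mu(t_0)$ with $\tau = (t_1-t_0)/n$, and pass to $n \to \infty$: the right side tends to $E(\mu(t_0)) - E(\mu(t_1))$ by the exponential formula and lower semicontinuity of $E$, and if we regard the left side as $\int |\partial E|^2(\mu_\tau(s))\,ds$ with $\mu_\tau(s) = J_\tau^{\lfloor(s-t_0)/\tau\rfloor}\mu(t_0)$, the varying time step exponential formula gives $\mu_\tau(s) \to \mu(s)$ pointwise in $s$, so Fatou together with lower semicontinuity of $|\partial E|$ yields the integral bound.
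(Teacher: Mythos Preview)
Your proposal is correct and in several places matches the paper's argument closely (parts (i), (iii), and the energy dissipation inequality are handled essentially the same way), but you diverge from the paper in two respects.

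For the semigroup property (ii), the paper does \emph{not} appeal to the varying time step formula. Instead it reduces the statement to $S(t)^m\mu = S(mt)\mu$ for $m\in\mathbb{N}$, argues by rational density and continuity, and proves $S(t)^m\mu = S(mt)\mu$ by induction on $m$: the triangle inequality splits $W_2(S(mt)\mu, S(t)^m\mu)$ into three terms, each controlled by the fixed-step exponential formula and Corollary~\ref{iterated cothm}. Your route via Theorem~\ref{exp form thm var time} is shorter and more conceptual, but it imports the Appendix result, whereas the paper's argument stays within the main text. One small imprecision: when you pass from $J_{s/n}^n J_{t/n}^n\mu$ to $J_{s/n}^n S(t)\mu$ you cite ``contraction (iii)'', but (iii) concerns $S(s)$, not $J_{s/n}^n$; what you actually need is the discrete almost-contraction of Corollary~\ref{iterated cothm}, whose $O(n^{-1})$ error term vanishes in the limit.

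For the time-Lipschitz bound (\ref{time Lipschitz}), the paper reads it off directly from Theorem~\ref{W2RasBoundThm} by setting $\tau = t/n$, $h = s/m$ and sending $m,n\to\infty$, without using (ii) at all. Your argument via $\mu(t)=S(t-s)\mu(s)$ together with the slope bound is equally valid (and in fact yields the slightly sharper constant $|t-s|\,e^{\lambda^- t}$ rather than $|t-s|\,e^{\lambda^- t}e^{\lambda^- s}$), but it relies on (ii) having been established first, whereas the paper proves (\ref{time Lipschitz}) before (ii) and uses it to obtain (i).
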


We now turn to the proofs of these results.

\begin{proof}[Proof of Theorem \ref{exp form thm}]
Throughout, we abbreviate $J_\tau \mu$ by $\mu_\tau$ and use that $\mu \in \overline{D(E)}$ implies $\mu_\tau \in D(|\partial E|)$ by (\ref{theorem AGS1}) and $\lim_{\tau \to 0} \mu_\tau = \mu$ by \cite[Lemma 3.1.2]{AGS}.

First, we prove the error estimate (\ref{errest}) for $\mu \in D(|\partial E|)$. By Theorem \ref{W2RasBoundThm}, if we define $\tau =
\frac{t}{n}$, $h = \frac{t}{m}$, with $m \geq n >2t \lambda^-$, so $0 \leq h \leq \tau <\frac{1}{2\lambda^-}$,
\begin{align} \label{CauchyEqn}
W_2^2(J_{t/n}^n \mu , J_{t/m}^m \mu) \leq 3 \frac{t^2 }{n} (1- \lambda^- t /n)^{-2n} (1-\lambda^- t/m)^{-2m} |\partial E|^2(\mu) \leq 3  \frac{t^2}{n} e^{8 \lambda^-t} |\partial E|^2(\mu) \ .
\end{align}
In the second inequality, we use $(1-\alpha)^{-1} \leq e^{2\alpha}$ for $\alpha \in [0, 1/2]$.
Thus, the sequence $J_{t/n}^n \mu$ is Cauchy, and $\lim_{n \to \infty}
J_{t/n}^n \mu$ exists \cite[Prop 7.1.5]{AGS}. The convergence is uniform in $t$ on compact subsets of $[0, +\infty)$. If $\mu(t)$ denotes the limit, then sending $m \to \infty$ in the first inequality of (\ref{CauchyEqn}) gives (\ref{errest}).

Now suppose $\mu \in \overline{D(E)}$. By the triangle inequality,
\begin{align*}
W_2(J^n_{t/n} \mu, J^m_{t/m} \mu) &\leq W_2(J^n_{t/n} \mu, J^n_{t/n} \mu_\tau) + W_2(J^n_{t/n} \mu_\tau, J^m_{t/m} \mu_\tau) + W_2(J^m_{t/m} \mu_\tau, J^m_{t/m} \mu) \ .
\end{align*}
By the previous estimate, we may choose $n, m$ large enough so that the second term arbitrarily small. By Corollary \ref{iterated cothm}, we may choose $n, m$ large enough and $\tau$ small enough so that the first and third terms are arbitrarily small.
Thus, the sequence $J^n_{t/n} \mu$ is Cauchy uniformly in $t \in [0,T]$, so the limit exists and convergence is uniform for $t \in [0,T]$.

It remains to show that $\mu(t)$ is the gradient flow of $E$ with initial conditions $\mu$ in the sense of Definition \ref{gradflowdef}. As our proof of Theorem \ref{contracting semigroup 1} merely uses the fact that $\mu(t) = \lim_{n\to \infty} J_{t/n}^n \mu$, we may leverage these results. By (i), $\lim_{t\to 0}S(t) \mu = \mu$, so it remains to show that $S(t) \mu$ satisfies inequality (\ref{continuous evi}) from Definition \ref{gradflowdef}.

Define the piecewise constant function $g_n(s) = [  E(J^i_{\tau} \mu)- E(\omega)](1+\lambda \tau)^{i-1}$ for $s \in((i-1)\tau, i\tau]$, $1\leq i \leq n$.
Iterating inequality (\ref{discevi}) for $0 < \tau < 1/\lambda^-$ shows that for all $ \omega \in D(E)$,
\begin{align*}
(1+\lambda \tau)^n W_2^2(J^n_\tau \mu, \omega) \leq W_2^2(\mu, \omega)+ 2 \tau \sum_{i=1}^n [E(\omega) - E(J^i_\tau \mu)] (1+ \lambda \tau)^{i-1}= W_2^2(\mu,\omega) - 2\int_0^{\tau n} g_n(s) ds
\end{align*}
Set $\tau = t/n$ and suppose the following claim holds: \\ \noindent
\textbf{Claim.} $\liminf_{n \to \infty}\int_0^t g_n(s) ds \geq \int_0^t [E(S(s) \mu) - E(\omega)]e^{\lambda s} ds$.

\noindent Then taking the liminf of the above inequality gives
\begin{align}
e^{\lambda t} W_2^2(S(t) \mu, \omega) \leq W_2^2(\mu,\omega) + 2 \int_0^t [E(\omega) - E(S(s) \mu)] e^{\lambda s} ds \ .
\end{align}
Applying the semigroup property $(ii)$ of Theorem \ref{contracting semigroup 1} and multiplying through by $e^{\lambda t_0}$,
\begin{align*}
e^{\lambda (t+t_0)} W_2^2(S(t +t_0) \mu, \omega) - e^{\lambda t_0}W_2^2(S(t_0)\mu,\omega) &\leq 2 \int_{t_0}^{t+t_0} [E(\omega) - E(S(s) \mu)] e^{\lambda s} ds \ .
\end{align*}
Dividing by $t$, sending $t \to 0$, and dividing by $2 e^{\lambda t_0}$ gives inequality (\ref{continuous evi}).

We conclude by proving the claim.
Since $E(J^i_{t/n} \mu) \geq E(J^n_{t/n} \mu)$ and $\liminf_{n \to \infty} E(J^n_{t/n} \mu) \geq E(S(t) \mu)$, $g_n(s)$ is bounded below. By Fatou's lemma, it is enough to show $ \liminf_{n \to \infty} g_{n}(s) ds \geq [E(S(s) \mu) - E(\omega)]e^{\lambda s}$, and by the lower semicontinuity of $E$, this holds if $\lim_{n \to \infty} J^i_{t/n} \mu = S(s) \mu$.
By the triangle inequality,
\[ W_2(J^i_{t/n}\mu, S(s)\mu) \leq W_2(J^i_{t/n}\mu, J^i_{t/n} \mu_\tau) + W_2(J^i_{t/n} \mu_\tau, S(s) \mu_\tau) + W_2(S(s) \mu_\tau, S(s) \mu) \ . \] 
By Corollary \ref{iterated cothm}, we may choose $n$ large enough and $\tau$ small enough, uniformly in $t \in [0,T]$, so that the first term is arbitrarily small. Likewise, the third term may be made arbitrarily small by the contraction inequality (iii) of Theorem \ref{contracting semigroup 1}. Thus it remains to show that $\lim_{n \to \infty} J^i_{t/n}\mu_\tau = S(s) \mu_\tau$. This follows by Theorem \ref{W2RasBoundThm}, since for $s \in \left((i-1)t/n, it/n \right]$ and $m, n$ large enough,
\begin{align*}
W_2^2(J^i_{t/{n}} \mu_\tau,J^m_{s/m} \mu_\tau) &\leq \left(\left(i \frac{t}{n} - s\right)^2 + \frac{t}{n}s + 2\frac{t^2}{n^2}i \right) e^{4 \lambda^- (t+s)} |\partial E|(\mu_\tau) \leq \left(\frac{3t^2}{n^2} + \frac{ts+ 2t^2}{n}\right) e^{4 \lambda^- (t+s)} |\partial E|(\mu_\tau) \ .
\end{align*}
 \end{proof}
 
\begin{proof}[Proof of Theorem \ref{contracting semigroup 1}]
Again, we abbreviate $J_\tau \mu$ by $\mu_\tau$ and use that $\mu \in \overline{D(E)}$ implies $\mu_\tau \in D(|\partial E|)$ by (\ref{theorem AGS1}) and $\lim_{\tau \to 0} \mu_\tau = \mu$ by \cite[Lemma 3.1.2]{AGS}.

(iii) follows by sending $n \to \infty$ in Corollary \ref{iterated cothm}. In particular, for $\mu, \nu \in \overline{D(E)}$,
\[W_2^2(S(t) \mu_\tau, S(t) \nu) \leq e^{-\lambda t} W_2^2(\mu_\tau,\nu) \ . \]
The result then follows by the triangle inequality and the continuity of the proximal map, as $\tau \to 0$,
\begin{align*}
W_2(S(t)\mu , S(t) \nu) &\leq W_2(S(t)\mu , S(t) \mu_\tau) + W_2(S(t)\mu_\tau , S(t) \nu) \leq e^{-\lambda t} W_2(\mu, \mu_\tau) + e^{-\lambda t}W_2(\mu_\tau, \nu) \ .
\end{align*}

Next, we show estimate (\ref{time Lipschitz}) on the modulus of continuity for $S(t) \mu$ when $ \mu \in D(|\partial E|)$. Given $t\geq s \geq 0$, define $\tau=\frac{t}{n}, h= \frac{s}{m}$ for $m$, $n$ large enough so $0 \leq h \leq \tau < \frac{1}{\lambda^-}$. By Theorem \ref{W2RasBoundThm},
 \begin{align}
W_2^2(J_{t/n}^n \mu, J_{s/m}^m \mu) \leq \left[(t - s)^2 + \frac{ts}{n} +
2 \frac{t^2}{n} \right] (1-\lambda^- t/n)^{-2n} (1- \lambda^- s/m)^{-2m} |\partial E|^2(\mu) \ .
\end{align}
Sending $m,n \to \infty$ and taking the square root of both sides gives (\ref{time Lipschitz}). If $\mu \in \overline{D(E)}$, then by (iii), $\lim_{\tau \to 0} S(t) \mu_\tau = S(t) \mu$ uniformly for $t \in [0,T]$, so $S(t) \mu$ is also continuous for $t \in [0,T]$. Thus (i) holds.

To show (ii), note that it is enough to show $S(t)^m \mu = S(mt) \mu$ for $m \in \mathbb{N}$. With this, we have for $l,k,r,s \in \mathbb{N}$,
\[ S\left(\frac{l}{k} + \frac{r}{s} \right) \mu = S\left(\frac{ls + rk}{ks} \right) \mu = \left[S \left(\frac{1}{ks} \right) \right]^{ls + rk}  \mu = \left[ S \left(\frac{1}{ks} \right) \right]^{ls} \left[ S \left(\frac{1}{ks} \right) \right]^{rk} \mu  = S\left(\frac{l}{k} \right) S\left( \frac{r}{s} \right) \mu  \ .\]
Since $S(t) \mu$ is continuous in $t \in [0, +\infty)$, $S(t+s) \mu= S(t) S(s) \mu$ for all $t, s \geq 0$. Likewise, it is enough to show the result for $\mu \in D(|\partial E|)$. If $\mu \in \overline{D(E)}$, then by (iii), $\lim_{\tau \to 0} S(t) \mu_\tau = S(t) \mu$ uniformly for $t \in [0,T]$, so $S(t+s) \mu = \lim_{\tau \to 0} S(t+s) \mu_\tau = \lim_{\tau \to 0} S(t)S(s) \mu_\tau = S(t) S(s) \mu$.

We proceed by induction. $S(t)^m \mu = S(mt) \mu$ for $m =1$. Suppose that $S(t)^{m-1} \mu = S((m-1)t) \mu$. We show that we may choose $n$ large enough to make the right hand side of the following inequality arbitrarily small:
\begin{align*}
&W_2(S(mt) \mu, S(t)^m \mu) \leq W_2(S(mt) \mu, (J^n_{t/n})^m \mu)  + W_2( (J^n_{t/n})^{m} \mu, J^n_{t/n} S(t)^{m-1} \mu)  + W_2( J^n_{t/n} S(t)^{m-1} \mu, S(t)^{m} \mu)
\end{align*}
Since $W_2( S(mt) \mu, (J^n_{t/n})^m \mu)  = W_2( S(mt) \mu,J^{nm}_{tm/nm} \mu)$ and $\lim_{n \to \infty} J^{nm}_{tm/nm} \mu = S(mt) \mu$, the first term may be made arbitrarily small. Since $\lim_{n \to \infty} J^n_{t/n} S(t)^{m-1} \mu = S(t)^m \mu$, so may the third term.

To bound the second term, note that by the lower semicontinuity of  $|\partial E|$ \cite[Corollary 2.4.10]{AGS} and (\ref{theorem AGS1}),
\begin{align} \label{subdiff bound along grad flow}
|\partial E|(\mu(t)) \leq \liminf_{n\to \infty} |\partial E|(J^n_{t/n} \mu) \leq \liminf_{n \to \infty} (1-\lambda^- t/n)^{-n} |\partial E|(\mu) = e^{\lambda^- t} |\partial E|(\mu) \ .
\end{align}
Hence, $|\partial E|^2( S(t)^{m-1} \mu) \leq e^{2(m-1)\lambda^- t} |\partial E|^2(\mu)$. Combining this with Corollary \ref{iterated cothm},
\begin{align*}
&W^2_2(J^n_{t/n} S(t)^{m-1} \mu, J^n_{t/n} (J^n_{t/n})^{m-1} \mu) \\
&\quad \leq(1-\lambda^- (t/n))^{-2n} W_2^2(S(t)^{m-1} \mu,(J^n_{t/n})^{m-1} \mu) + \frac{n (t/n)^2 e^{2(m-1)\lambda^- t}}{(1-\lambda^- (t/n))^{2n}} |\partial E|^2(\mu) \ .
\end{align*}
By the inductive hypothesis, $\lim_{n \to \infty} (J^n_{t/n})^{m-1} \mu = \lim_{n \to \infty} J^{n(m-1)}_{t(m-1)/n(m-1)} \mu = {S((m-1)t) \mu} = S(t)^{m-1} \mu$. Thus, we may choose $n$ large enough to make the second term arbitrarily small.

Finally, we show the energy dissipation inequality (\ref{energydissipationineq}).
By (ii), it is enough to prove the result for $t_0 =0$, $t_1 = t$. By inequality (\ref{theorem AGS1}),
\begin{align} \label{semigroup pf 2}
 \tau (1+\lambda \tau/2) |\partial E|^2(\mu_\tau) \leq E(\mu) - E(\mu_\tau) \ .
 \end{align}
  Define $g_n(s) = (1+\lambda \tau/2)|\partial E|^2(J^i_\tau \mu)$ for $s \in((i-1)\tau, i\tau ]$, $1\leq i \leq n$.
Summing (\ref{semigroup pf 2}),
\begin{align} \label{energydis1}
\int_0^{\tau n} g_n(s) ds =  \sum_{i=1}^{n} \tau(1+ \lambda \tau/2) |\partial E|^2(J^i_\tau \mu) \leq E(\mu) - E(J^n_\tau \mu)  \ .
 \end{align}
Let $\tau = t/n$. As in the proof of the previous theorem, $\lim_{n \to \infty} J^i_{t/n}\mu= S(s) \mu$. Taking the liminf of (\ref{energydis1}) and applying Fatou's lemma along with the lower semicontinuity of $E$ and $|\partial E|$ \cite[Corollary 2.4.10]{AGS} gives
\begin{align*}
\int_0^t |\partial E|^2(S(s) \mu) = \int_0^t \liminf_{n \to \infty} g_n(s) ds \leq E(\mu) - E(S(t) \mu) \ .
\end{align*}

The lower semicontinuity of $E$ and the fact that $J^n_{t/n} \mu \in D(E)$ ensure $E(S(t) \mu) < +\infty$ for all $t \geq 0$. Furthermore, the energy dissipation inequality implies that $|\partial E|^2 (S(s)\mu) < +\infty $ for almost every $s \geq 0$. By (iii) and (\ref{subdiff bound along grad flow}), $|\partial E|(S(t) \mu) \leq e^{\lambda^- (t-s)} |\partial E|(S(s) \mu)$ for $0<s< t$. Therefore, $S(t): \overline{D(E)} \to D(|\partial E|)$.
\end{proof}

\appendix{
\section{Generalizations}
\subsection{Varying Time Steps} \label{var time step section}
This section contains generalizations of the previous theorems to the case where we replace $m$ time steps of size $h$ with a sequence of varying time steps.
For simplicity of notation, we write $J^m = \prod_{k=1}^m J_{h_k} \mu$, $J^n= J^n_\tau \mu$, $S_m= \sum_{k=1}^m h_k$, and $P_m = \prod_{k=1}^{m} (1-\lambda^-h_k)^{-1}$.

In Theorems \ref{var tau asym rec ineq}-\ref{W2RasBoundvartime}  below, we suppose $E$ satisfies convexity assumption \ref{funas}, $\mu \in D(|\partial E|)$, and $0 < h_i \leq \tau< \frac{1}{\lambda^-}$.
The first result is a generalization of Theorem \ref{recineqthm}, the second is a generalization of Lemma \ref{lingrowthm}, and the third is a generalization of Theorem \ref{W2RasBoundThm}.

 \begin{thm}\label{var tau asym rec ineq} \ \\
$(1-\lambda^- h_m)^2 W_2^2(J^n , J^m ) \leq  \frac{h_m}{\tau} (1-\lambda^- \tau)^{-1} W_2^2( J^{n-1},J^{m-1})+ \frac{\tau -h_m}{\tau}W_{2}^2(J^{m-1}, J^n) + 2 h_m^2 P_m^2 |\partial E|^2(\mu)$.
\end{thm}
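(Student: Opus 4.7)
The plan is to adapt the proof of Theorem~\ref{recineqthm} essentially verbatim, with the single time step $h$ replaced by the final step size $h_m$, and to control the resulting $|\partial E|^2(J^{m-1})$ factor using an iterated application of inequality~(\ref{theorem AGS1}) along the varying-step sequence. The key observation that makes this work is that the chain of inequalities in Theorem~\ref{recineqthm} never uses that $J^{m-1}$ was built from repeated applications of the \emph{same} proximal map; only the last step from $J^{m-1}$ to $J^m = J_{h_m} J^{m-1}$ and the ambient constant step $\tau$ enter the estimates.

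First I would apply Theorem~\ref{proxmapdifftimethm} with base point $J^{n-1}$, writing $J^n = J_\tau J^{n-1} = J_{h_m}\bigl[\mu_{(\tau - h_m)/\tau}^{J^{n-1} \to J^n}\bigr]$ (using that $h_m \leq \tau$), and then compare $J_{h_m}$ of this intermediate point with $J_{h_m} J^{m-1} = J^m$ using the almost contraction inequality of Theorem~\ref{cothm} with step size $h_m$, together with Remark~\ref{transport metric bounds W2}, to obtain
\[
(1-\lambda^- h_m)^2 W_2^2(J^n, J^m) \leq W_{2, J^{n-1}}^2\bigl(\mu_{(\tau - h_m)/\tau}^{J^{n-1} \to J^n}, J^{m-1}\bigr) + h_m^2 |\partial E|^2(J^{m-1}).
\]
Next, by Proposition~\ref{convexity of pseudo metrics} the square transport metric $W_{2, J^{n-1}}^2(\,\cdot\,, J^{m-1})$ is convex along the geodesic from $J^{n-1}$ to $J^n$, which yields the analogue of~(\ref{asyrecpf1}) with $h \mapsto h_m$.

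The middle portion of the argument, producing the analogue of~(\ref{asyrecpf2}) to bound $W_{2, J^{n-1}}^2(J^{m-1}, J^n)$, transfers word for word: apply Theorem~\ref{sdiscevi} with $(\mu,\nu) = (J^{m-1}, J^n)$ (step size $h_m$) and with $(\mu, \nu) = (J^{n-1}, J^{m-1})$ (step size $\tau$), multiply by $\tau$ and $h_m$ respectively and add, discard the $W_2^2(J^{n-1}, J^n)$ term using inequality~(\ref{theorem AGS1}) applied to the $\tau$-step, and use the convex splitting $a^2/\epsilon + b^2/(1-\epsilon) \geq (a+b)^2$ exactly as in equation~(\ref{conpf4}) with $\epsilon = \lambda^- \tau$. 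Substituting this back and using $W_{2, J^{n-1}}^2(J^{n-1}, J^{m-1}) = W_2^2(J^{n-1}, J^{m-1})$ to simplify the first term produces the same structural inequality as in the constant-step case, with $h \mapsto h_m$ throughout and the error term taking the form $\tfrac{2 h_m^2}{1 - \lambda^- h_m}|\partial E|^2(J^{m-1})$.

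The only genuinely new bookkeeping is the bound on $|\partial E|^2(J^{m-1})$. Iterating inequality~(\ref{theorem AGS1}) across each step $h_1, \dots, h_{m-1}$ gives
\[
|\partial E|(J^{m-1}) \leq \prod_{k=1}^{m-1}(1-\lambda^- h_k)^{-1} |\partial E|(\mu) = P_{m-1} |\partial E|(\mu),
\]
so that $\tfrac{2 h_m^2}{1-\lambda^- h_m} |\partial E|^2(J^{m-1}) \leq 2 h_m^2 P_{m-1}^2 (1-\lambda^- h_m)^{-1} |\partial E|^2(\mu) \leq 2 h_m^2 P_m^2 |\partial E|^2(\mu)$, using the definition $P_m = P_{m-1}(1-\lambda^- h_m)^{-1}$ and $(1-\lambda^- h_m) \leq 1$. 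I do not expect a substantial obstacle: this result is essentially a careful reindexing of Theorem~\ref{recineqthm}, since all the nontrivial geometric inputs (the Euler--Lagrange characterization, the convexity of the transport metric along generalized geodesics, and the EVI) are single-step statements that are insensitive to how the earlier iterates were produced.
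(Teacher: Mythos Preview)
Your proposal is correct and matches the paper's approach exactly: the paper simply states that the proof is a ``straightforward generalization of the previous proof,'' obtained by replacing $h$ with $h_m$ and using the iterated slope bound $|\partial E|^2(J^m)\leq P_m^2|\partial E|^2(\mu)$, which is precisely the scheme you outline. One minor imprecision: the $-hW_2^2(J^{n-1},J^n)$ term arising from~(\ref{asympf2}) is not discarded via~(\ref{theorem AGS1}) but rather retained and canceled against the $W_2^2(J^n,J^{n-1})$ appearing in the convex-splitting step~(\ref{conpf4}); this does not affect the validity of your argument, since you explicitly say the middle portion ``transfers word for word.''
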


%
%

\begin{lem} \label{lingrowthm vartimestep}
$W_2 \left(J^m, \mu \right) \leq |\partial E(\mu)| S_m P_m$.
\end{lem}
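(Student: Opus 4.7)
The plan is to mirror the proof of Lemma \ref{lingrowthm} almost verbatim, replacing the constant time step $\tau$ with the sequence $h_1, \dots, h_m$. First, I would apply the triangle inequality along the sequence of iterates:
\[ W_2(J^m, \mu) \leq \sum_{i=1}^m W_2(J^i, J^{i-1}) \ . \]
Each term on the right is bounded using the second inequality in (\ref{theorem AGS1}), applied with time step $h_i$ to the base point $J^{i-1}$, giving
\[ W_2(J^i, J^{i-1}) \leq \frac{h_i}{1+\lambda h_i} |\partial E|(J^{i-1}) \leq \frac{h_i}{1-\lambda^- h_i} |\partial E|(J^{i-1}) \ , \]
where in the second inequality I used $\lambda \geq -\lambda^-$.

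Next, I would iterate the rightmost inequality in (\ref{theorem AGS1}) to control $|\partial E|(J^{i-1})$ in terms of $|\partial E|(\mu)$. Applied at each step $k$, it gives $|\partial E|(J^k) \leq (1+\lambda h_k)^{-1} |\partial E|(J^{k-1}) \leq (1-\lambda^- h_k)^{-1} |\partial E|(J^{k-1})$, so a telescoping product yields
\[ |\partial E|(J^{i-1}) \leq \prod_{k=1}^{i-1} (1-\lambda^- h_k)^{-1} |\partial E|(\mu) = P_{i-1} |\partial E|(\mu) \ . \]
Combining the two bounds gives $W_2(J^i, J^{i-1}) \leq h_i P_i |\partial E|(\mu)$ since $P_{i-1}(1-\lambda^- h_i)^{-1} = P_i$.

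Finally, since $(1-\lambda^- h_k)^{-1} \geq 1$ for every $k$, the partial products satisfy $P_i \leq P_m$ for all $i \leq m$. Summing over $i$,
\[ W_2(J^m, \mu) \leq \sum_{i=1}^m h_i P_i |\partial E|(\mu) \leq P_m |\partial E|(\mu) \sum_{i=1}^m h_i = S_m P_m |\partial E|(\mu) \ , \]
which is the claimed bound. There is no real obstacle here; the argument is purely bookkeeping, and the only nontrivial input is the monotonicity of $P_i$ in $i$, which justifies pulling $P_m$ out of the sum.
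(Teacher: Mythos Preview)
Your proof is correct and follows exactly the approach the paper indicates: the paper merely states that the result is a straightforward generalization of Lemma \ref{lingrowthm} obtained by replacing the fixed step with $h_i$ and using $|\partial E|(J^{m}) \leq P_m |\partial E|(\mu)$, and your argument fills in precisely these details.
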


\begin{thm}  \label{W2RasBoundvartime} 
$W_2^2(J^n, J^m) \leq \left[ \left( n\tau - S_m \right)^2 + \tau S_m +
2 \tau^2 n \right] (1-\lambda^- \tau)^{-2n} P_m^2 |\partial E|^2(\mu) $.
\end{thm}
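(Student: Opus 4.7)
The plan is to mirror the induction in the proof of Theorem \ref{W2RasBoundThm}, with the constant quantities $mh$ and $(1-\lambda^- h)^{-m}$ replaced by $S_m$ and $P_m$ throughout. Writing $f(n,m) := (n\tau - S_m)^2 + \tau S_m + 2\tau^2 n$ for the bracketed quantity, I would prove $W_2^2(J^n, J^m) \leq f(n,m)(1-\lambda^-\tau)^{-2n} P_m^2 |\partial E|^2(\mu)$ by double induction on $(n,m)$.

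First I would dispatch the two base cases. When $m = 0$, the bound reduces to $W_2^2(J^n, \mu) \leq (n^2\tau^2 + 2\tau^2 n)(1-\lambda^-\tau)^{-2n}|\partial E|^2(\mu)$, which follows immediately from Lemma \ref{lingrowthm} after dropping the nonnegative $2\tau^2 n$. When $n = 0$, the bound reduces to $W_2^2(\mu, J^m) \leq (S_m^2 + \tau S_m) P_m^2 |\partial E|^2(\mu)$, which follows from Lemma \ref{lingrowthm vartimestep} after dropping $\tau S_m$.

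For the inductive step, I would assume the inequality for $(n-1, m)$ and $(n, m)$ and prove it for $(n, m+1)$ by applying Theorem \ref{var tau asym rec ineq} with $m$ replaced by $m+1$. Substituting the two inductive bounds into the right hand side of that inequality, dividing by $(1-\lambda^- h_{m+1})^2$, and using the identity $P_m^2 = P_{m+1}^2 (1-\lambda^- h_{m+1})^2$ consolidates the $P$-factors. The monotonicity estimates $(1-\lambda^-\tau)^{-2(n-1)-1} \leq (1-\lambda^-\tau)^{-2n}$ and $(1-\lambda^- h_{m+1})^{-2} \leq (1-\lambda^-\tau)^{-2n}$ (using $h_{m+1} \leq \tau$) then allow me to pull everything under the common prefactor $(1-\lambda^-\tau)^{-2n} P_{m+1}^2 |\partial E|^2(\mu)$.

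What remains is the purely algebraic claim
\[
\frac{h_{m+1}}{\tau} f(n-1, m) + \frac{\tau - h_{m+1}}{\tau} f(n, m) + 2 h_{m+1}^2 \;\leq\; f(n, m+1),
\]
which is exactly the step carried out in Theorem \ref{W2RasBoundThm} with $h$ replaced by $h_{m+1}$. Expanding $((n-1)\tau - S_m)^2 = (n\tau - S_m)^2 - 2(n\tau - S_m)\tau + \tau^2$ and $(n\tau - S_{m+1})^2 = (n\tau - S_m)^2 - 2(n\tau - S_m)h_{m+1} + h_{m+1}^2$ reduces the inequality to $h_{m+1}^2 \leq 2\tau h_{m+1}$, which holds because $h_{m+1} \leq \tau$. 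The only conceptual obstacle, albeit minor, is ensuring that all the $\lambda^-$-dependent factors consolidate cleanly into $(1-\lambda^-\tau)^{-2n} P_{m+1}^2$; once the identity $P_m^2 = P_{m+1}^2(1-\lambda^- h_{m+1})^2$ is in hand, the rest of the argument parallels the constant-time-step case essentially verbatim.
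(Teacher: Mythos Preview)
Your proposal is correct and follows precisely the approach indicated in the paper, which states that the proof is a straightforward generalization of Theorem \ref{W2RasBoundThm} obtained by replacing $h$ with $h_m$, $mh$ with $S_m$, and using $|\partial E|^2(J^{m})\leq P_m^2 |\partial E|^2(\mu)$. Your identification of the key identity $P_m^2 = P_{m+1}^2(1-\lambda^- h_{m+1})^2$ and the reduction of the algebraic step to $h_{m+1}^2 \leq 2\tau h_{m+1}$ are exactly right.
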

%

The proof of each of these results is a straightforward generalization of the previous proof. (Simply replace $h$ with $h_m$, use $|\partial E|^2(J^{m})\leq P_m^2 |\partial E|^2(\mu)$, and, in the third result, replace $mh$ by $S_m$.)

We combine these results in the following theorem to prove the convergence of the discrete gradient flow with varying time steps to the continuous gradient flow. 
\begin{thm} \label{exp form thm var time}
Suppose $E$ satisfies convexity assumption \ref{funas}. For $\mu \in D(|\partial E|)$ and any partition of the interval $\{0 = t_0 < t_1 < \dots < t_i < t_{i+1} < \dots t_m = t \}$ corresponding to time steps $h_i = t_i - t_{i+1}$
the discrete gradient flow sequence sequence $\prod_{i=1}^m J_{h_i} \mu$ converges to the gradient flow $\mu(t)$ as $|\bh| = \max_{1\leq i\leq m} h_i \to 0$. The convergence is uniform in $t$ on compact subsets of $[0, +\infty)$. When $|\bh| \leq \frac{1}{2 \lambda^-}$, the distance between the approximating sequence $\prod_{i=1}^m J_{h_i} \mu$ and the continuous gradient flow $S(t) \mu$ is bounded by
\begin{align*}
W_2\left(S(t) \mu, \prod_{i=1}^m J_{h_i} \mu \right) \leq 2\left[ |\bh|^2 + 3|\bh| t \right]^{1/2} e^{4\lambda^-t}|\partial E|(\mu) 
\end{align*}
\end{thm}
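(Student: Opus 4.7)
The plan is to mirror the proof of Theorem \ref{exp form thm}, using the varying-step bound of Theorem \ref{W2RasBoundvartime} in place of Theorem \ref{W2RasBoundThm}. The key difficulty is that Theorem \ref{W2RasBoundvartime} requires the uniform time step $\tau$ to satisfy $\tau \geq h_i$ for every $i$, which prevents passing to the limit $\tau \to 0$ inside that estimate. I will therefore interpose an auxiliary uniform discrete gradient flow $J^n_\tau \mu$ with $\tau$ of order $|\bh|$, compare it to $\prod_{i=1}^m J_{h_i}\mu$ by Theorem \ref{W2RasBoundvartime}, and compare it separately to $S(t)\mu$ using the exponential-formula error (\ref{errest}) together with the time-Lipschitz bound (\ref{time Lipschitz}) from Theorem \ref{contracting semigroup 1}.

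Concretely, fix $\mu \in D(|\partial E|)$ and the partition $\bh$, and set $\tau = |\bh|$ and $n = \lceil t/\tau \rceil$, so that $\tau \geq h_i$ for every $i$, $n\tau \in [t, t+\tau)$, and $\tau \leq 1/(2\lambda^-)$. Since $S_m = t$, Theorem \ref{W2RasBoundvartime} yields
\[ W_2^2\!\left(J^n_\tau \mu,\; \prod_{i=1}^m J_{h_i}\mu\right) \leq \big[(n\tau-t)^2 + \tau t + 2\tau^2 n\big](1-\lambda^-\tau)^{-2n} P_m^2 \, |\partial E|^2(\mu). \]
Using $(n\tau - t)^2 \leq |\bh|^2$, $2\tau^2 n \leq 2|\bh|(t+|\bh|)$, the elementary bound $(1-x)^{-1} \leq e^{2x}$ on $[0,1/2]$ (valid since $\lambda^-\tau \leq 1/2$), and $P_m \leq e^{2\lambda^- S_m} = e^{2\lambda^- t}$, this simplifies to a bound of order $(|\bh|^2 + |\bh|t)\, e^{8\lambda^- t}|\partial E|^2(\mu)$.

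Next, decompose via the triangle inequality $W_2(S(t)\mu, \prod J_{h_i}\mu) \leq W_2(S(t)\mu, S(n\tau)\mu) + W_2(S(n\tau)\mu, J^n_\tau \mu) + W_2(J^n_\tau \mu, \prod J_{h_i}\mu)$. The first term is bounded by (\ref{time Lipschitz}) by $|n\tau-t|\,e^{2\lambda^- t}|\partial E|(\mu) \leq |\bh|\,e^{2\lambda^- t}|\partial E|(\mu)$. The second is bounded by the error estimate (\ref{errest}) applied at time $n\tau$ with $n$ uniform steps of size $\tau = (n\tau)/n$: this yields $\sqrt{3}\,\tau\sqrt{n}\, e^{3\lambda^-(n\tau)}|\partial E|(\mu)$, and since $\tau\sqrt{n} = \sqrt{|\bh|^2 n} \leq \sqrt{|\bh|(t+|\bh|)}$, the resulting contribution is $O(\sqrt{|\bh|^2 + |\bh|t})\,e^{3\lambda^- t}|\partial E|(\mu)$. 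The third term was bounded in the previous paragraph. Summing the three produces a bound of the claimed form $O(\sqrt{|\bh|^2 + |\bh|t})\,e^{4\lambda^- t}|\partial E|(\mu)$; uniform convergence on compact subsets of $[0, +\infty)$ then follows at once because the right-hand side tends to zero with $|\bh|$ uniformly in $t$ on any bounded interval.

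The main obstacle I expect is obtaining the precise constants $2$ and $3$ in the stated bound $2[|\bh|^2 + 3|\bh|t]^{1/2} e^{4\lambda^- t}|\partial E|(\mu)$. The naive decomposition above introduces several $|\bh|$-order error terms stemming from the mismatch $n\tau \neq t$, and the clean form will likely require either choosing $\tau$ slightly above $|\bh|$ so that $t/\tau$ is an integer (so that the $(n\tau - t)^2$ error vanishes and the bracket collapses to $3\tau t$), or absorbing the time-Lipschitz correction directly into the exponential-formula step rather than splitting it off via the intermediate point $S(n\tau)\mu$.
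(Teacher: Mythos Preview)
Your proposal is correct and yields the right order of convergence, but the paper's argument uses a cleaner two-term decomposition that recovers the exact constants directly. Instead of splitting $W_2(S(t)\mu, J^n_\tau\mu)$ via the intermediate point $S(n\tau)\mu$ and invoking the error estimate (\ref{errest}) and the time-Lipschitz bound (\ref{time Lipschitz}) separately, the paper writes $W_2(S(t)\mu, J^n_\tau\mu) = \lim_{l\to\infty} W_2(J^l_{t/l}\mu, J^n_\tau\mu)$ and applies Theorem \ref{W2RasBoundvartime} (with the $l$ equal steps $t/l$ playing the role of the varying steps) directly inside the limit. Because $S_l = t$ and the bracket $[(n\tau - t)^2 + \tau t + 2\tau^2 n]$ is independent of $l$, the limit contributes only a factor $e^{2\lambda^- t}$ from $(1-\lambda^- t/l)^{-l}$; the resulting term is then identical in form to the bound on $W_2(J^n_\tau\mu,\prod_i J_{h_i}\mu)$, so the sum is exactly twice one bracket, and with $n = \lfloor t/\tau\rfloor$ one reads off $(n\tau-t)^2 + \tau t + 2\tau^2 n \leq |\bh|^2 + 3|\bh|t$. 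Your three-term split works but trades this algebraic coincidence for separate estimates that each carry their own constants, which is precisely the obstacle you flagged.
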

\begin{proof}
Let $\tau= |\bh|$, so $0 < h_k \leq \tau< \frac{1}{2\lambda^-}$, and let $n$ be the greatest integer less than or equal to $t/\tau$, so $ t/\tau - 1 < n \leq t/\tau$, hence $t - \tau < n \tau \leq t$.
By the triangle inequality, Theorem \ref{W2RasBoundvartime}, and the fact that $(1-\alpha)^{-1} \leq e^{2\alpha}$ for $\alpha \in [0, 1/2]$,
\begin{align*}
W_2\left(S(t) \mu, \prod_{i=1}^m J_{h_i} \mu \right) &\leq W_2 (S(t) \mu, J_{\tau}^n \mu)+ W_2 \left(J_\tau^n \mu, \prod_{i=1}^m J_{h_i} \mu \right) =\lim_{l \to \infty} W_2 (J^l_{t/l} \mu, J_{\tau}^n \mu)+ W_2 \left(J_\tau^n \mu, \prod_{i=1}^m J_{h_i} \mu \right) \\
&\leq \lim_{l \to \infty} \left[(n\tau - t)^2 + \tau t +
2 \tau^2 n \right]^{1/2} (1-\lambda^- \tau)^{-n} (1- \lambda^- t/l)^{-l} |\partial E|(\mu) \\
&\quad + \left[ \left( n\tau - t \right)^2 + \tau t +
2 \tau^2 n \right]^{1/2} (1-\lambda^- \tau)^{-n} \left\{ \prod_{k=1}^m(1- \lambda^- h_k)^{-1} \right\} |\partial E|(\mu) \\
&\leq 2\left[ |\bh|^2 + 3|\bh| t \right]^{1/2} e^{4\lambda^-t}|\partial E|(\mu) 
\end{align*}
\end{proof}

\subsection{Allowing measures which give mass to small sets} \label{small set section}
In this section, we give proofs of the Euler-Lagrange equation (Theorem \ref{elthm}) and the relation between proximal maps with small and large time steps (Theorem \ref{proxmapdifftimethm}) in the general case, in which measures may give mass to small sets. In this context, optimal transport maps may no longer exist, so the transport metrics (Definition \ref{transport distance def}) may no longer be well-defined.

\begin{thm}[Euler-Lagrange equation] \label{genelthm} Suppose $E$  satisfies convexity assumption \ref{funas}.
Then for $\omega \in \overline{D(E)}$, $0< \tau< \frac{1}{\lambda^{-}}$, $\mu$ is the unique minimizer of the quadratic perturbation $\Phi(\tau, \omega; \cdot)$ if and only if for all $\gamma \in \Gamma_0(\mu,\omega)$, defining $\gamma_\tau = \rho_\tau \# \gamma$, $\rho_\tau(x_1,x_2) =(x_1, (x_2-x_1)/\tau)$, we have 
\begin{align} \label{gen EL pf 1} 
 E(\nu) - E(\mu) \geq \int \la x_2, x_3 - x_1 \ra d \bgamma + o\left( \|x_1 - x_3\|_{L^2(\bgamma)} \right)\quad  \forall \bgamma \in \Gamma(\gamma_\tau, \nu) , \nu \in D(E) , \nu \to \mu \ .
 \end{align}
\end{thm}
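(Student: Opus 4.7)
The proof splits into two directions of quite different character. The forward implication follows the standard first-variation strategy of AGS Lemma 10.1.2, while the backward implication is new and adapts the proof of Theorem \ref{elthm}: where that proof used the transport metric $W_{2,\mu}$ and its subdifferential, here we must argue directly with generalized geodesics and transport plans, since optimal maps need not exist under $\omega \in \overline{D(E)}$ alone.

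Both directions rest on the same bookkeeping. Given $\gamma \in \Gamma_0(\mu,\omega)$ and any $\bgamma \in \Gamma(\gamma_\tau,\nu)$ with coordinates $(x_1,x_2,x_3)$, lift $\bgamma$ to a $3$-plan $\tilde{\bomega}$ on $\Rd \times \Rd \times \Rd$ via $(x_1,x_2,x_3) \mapsto (x_1,\, x_1+\tau x_2,\, x_3)$. Writing $(a,b,c)$ for the resulting coordinates (distributed as $(\mu,\omega,\nu)$, with $(a,b)\sim\gamma$), one has
\[
\int\la x_2,\, x_3-x_1\ra\, d\bgamma \;=\; \tfrac{1}{\tau}\int\la b-a,\, c-a\ra\, d\tilde{\bomega},\qquad
\|x_1-x_3\|_{L^2(\bgamma)}^2 \;=\; \int |a-c|^2\, d\tilde{\bomega}.
\]

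For the forward direction, suppose $\mu$ minimizes $\Phi(\tau,\omega;\cdot)$ and form the interpolation $\mu^\alpha = ((1-\alpha)a + \alpha c)\#\tilde{\bomega}$. Since $(b,(1-\alpha)a+\alpha c)\#\tilde{\bomega}$ is a (possibly non-optimal) transport plan from $\omega$ to $\mu^\alpha$, expanding the square and using optimality of $(a,b)$ gives
\[
W_2^2(\omega,\mu^\alpha) - W_2^2(\omega,\mu) \;\leq\; 2\alpha \int \la b-a,\, a-c\ra\, d\tilde{\bomega} \;+\; \alpha^2 \|x_1-x_3\|_{L^2(\bgamma)}^2.
\]
Taking $\alpha=1$ in the minimality inequality $\Phi(\tau,\omega;\mu)\leq\Phi(\tau,\omega;\mu^\alpha)$ and substituting the bookkeeping identities yields the claim with the error $-\tfrac{1}{2\tau}\|x_1-x_3\|_{L^2(\bgamma)}^2$, which is $o(\|x_1-x_3\|_{L^2(\bgamma)})$ as $\nu\to\mu$.

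For the backward direction, fix $\nu \in D(E)$ and choose a $3$-plan $\bomega$ on $(\omega,\mu,\nu)$ with $\pi^{1,2}\#\bomega \in \Gamma_0(\omega,\mu)$ and $\pi^{1,3}\#\bomega \in \Gamma_0(\omega,\nu)$, and let $\mu_\alpha$ be the induced generalized geodesic from $\mu$ to $\nu$ with base $\omega$; by $\lambda$-convexity along generalized geodesics $\mu_\alpha \in D(E)$ and $\mu_\alpha \to \mu$ as $\alpha \to 0^+$. Taking $\gamma = (\pi^2,\pi^1)\#\bomega \in \Gamma_0(\mu,\omega)$, build the tailored coupling
\[
\bgamma_\alpha \;=\; \bigl(\pi^2,\,(\pi^1-\pi^2)/\tau,\,(1-\alpha)\pi^2 + \alpha\pi^3\bigr)\#\bomega \;\in\; \Gamma(\gamma_\tau,\mu_\alpha),
\]
for which a direct computation gives $\|x_1-x_3\|_{L^2(\bgamma_\alpha)} = \alpha\, W_{2,\bomega}(\mu,\nu)$. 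Inserting $\bgamma_\alpha$ into the hypothesized inequality and dividing by $\alpha$ yields $\liminf_{\alpha \to 0^+}\alpha^{-1}[E(\mu_\alpha)-E(\mu)] \geq \tfrac{1}{\tau}\int \la \pi^1-\pi^2,\, \pi^3-\pi^2\ra\, d\bomega$. Meanwhile, evaluating the right derivative of $\tfrac{1}{2\tau} W_2^2(\omega,\mu_\alpha)$ at $\alpha=0^+$ from the Hilbertian identity (\ref{pseudo metric Hilbertian identity}) produces exactly $-\tfrac{1}{\tau}\int \la \pi^1-\pi^2,\, \pi^3-\pi^2\ra\, d\bomega$. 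The two contributions cancel, giving $\liminf_{\alpha\to 0^+}\alpha^{-1}[\Phi(\tau,\omega;\mu_\alpha) - \Phi(\tau,\omega;\mu)] \geq 0$. Because $\Phi(\tau,\omega;\cdot)$ is $(\tau^{-1}+\lambda)$-convex along $\mu_\alpha$ with $\tau^{-1}+\lambda>0$, convexity forces $\alpha \mapsto \Phi(\tau,\omega;\mu_\alpha)$ to be nondecreasing on $[0,1]$, so $\Phi(\tau,\omega;\nu)\geq \Phi(\tau,\omega;\mu)$; uniqueness follows from strict convexity.

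The hard part will be the backward direction: engineering $\bgamma_\alpha$ so that the "velocity" second marginal $(\pi^1-\pi^2)/\tau$ of $\gamma_\tau$ pairs correctly with the generalized-geodesic displacement $\alpha(\pi^3-\pi^2)$, and verifying that the error $o(\|x_1-x_3\|_{L^2(\bgamma_\alpha)}) = o(\alpha)$ is small enough, uniformly in the (fixed) endpoint $\nu$, to preserve the cancellation that drives the right-derivative bound.
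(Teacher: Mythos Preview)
Your proposal is essentially the paper's own argument, reorganized slightly. The paper cites \cite[Lemma 10.3.4]{AGS} for the forward direction (your direct computation with $\alpha=1$ is a clean variant that even yields the explicit remainder $-\tfrac{1}{2\tau}\|x_1-x_3\|^2_{L^2(\bgamma)}$). For the backward direction, the paper constructs exactly your coupling $\bgamma_\alpha = (\pi^2,(\pi^1-\pi^2)/\tau,(1-\alpha)\pi^2+\alpha\pi^3)\#\bomega$, extracts the same inner-product lower bound on $E(\mu_\alpha)-E(\mu)$, and then combines with the Hilbertian identity; the only cosmetic difference is that the paper first uses $\lambda$-convexity of $E$ alone to pass to $E(\nu)-E(\mu)$ and then adds the $W_2^2$ contribution explicitly, whereas you add the two pieces first and invoke $(\tau^{-1}+\lambda)$-convexity of $\Phi$ along $\mu_\alpha$ at the end. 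Both routes yield $\Phi(\tau,\omega;\nu)\geq\Phi(\tau,\omega;\mu)$.

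There is one genuine (though easily repaired) slip: you ``choose a $3$-plan $\bomega$'' with the correct optimal marginals, but Assumption~\ref{funas} only guarantees that \emph{some} generalized geodesic from $\mu$ to $\nu$ with base $\omega$ makes $E$ $\lambda$-convex---not every one. Your final step (``$\Phi$ is $(\tau^{-1}+\lambda)$-convex along $\mu_\alpha$'') requires precisely this convexity, so you must select $\bomega$ to be the plan furnished by Definition~\ref{convexgengeodefi}, as the paper does explicitly. Once this choice is made, the convexity inequality $\Phi(\mu_\alpha)\leq(1-\alpha)\Phi(\mu)+\alpha\Phi(\nu)-\alpha(1-\alpha)\tfrac{\tau^{-1}+\lambda}{2}W_{2,\bomega}^2(\mu,\nu)$, rearranged and combined with your $\liminf_{\alpha\to0^+}\alpha^{-1}[\Phi(\mu_\alpha)-\Phi(\mu)]\geq0$, gives $\Phi(\nu)\geq\Phi(\mu)$ directly (the intermediate claim that $\alpha\mapsto\Phi(\mu_\alpha)$ is nondecreasing is not needed and is not literally implied by Definition~\ref{convexgengeodefi}).
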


\begin{proof}
\cite[Lemma 10.3.4]{AGS} shows that if $\mu$ is the unique minimizer of $\Phi(\tau, \omega; \cdot)$, then (\ref{gen EL pf 1}) holds for all $\gamma \in \Gamma_0(\mu,\omega)$.

We now prove the converse. Suppose that for all $\gamma \in \Gamma_0(\mu,\omega)$, (\ref{gen EL pf 1}) holds, and fix $\nu \in D(E)$. There exists some generalized geodesic $\mu_\alpha$ from $\mu$ to $\nu$ with base $\omega$ along which $E$ is $\lambda$-convex. Let $\bomega$ be the plan that induces this generalized geodesic, with $\pi^{1,2} \# \bomega \in \Gamma_0(\omega,\mu)$ and $\pi^{1,3} \# \bomega \in \Gamma_0(\omega,\nu) $, so $\mu_\alpha = [(1-\alpha)\pi^2 + \alpha \pi^3] \# \bomega$. 
Applying (\ref{gen EL pf 1}) with$\gamma = \pi^{2,1} \# \bomega$ and $\nu = \mu_\alpha$ shows
\begin{align} \label{gen EL pf 2}
 E(\mu_\alpha) - E(\mu) \geq \int \la x_2, x_3 - x_1 \ra d \bgamma_\alpha + o \left(\| x_1 - x_3\|_{L^2(\bgamma_\alpha)}\right)\quad \forall \bgamma_\alpha \in \Gamma(\gamma_\tau, \mu_\alpha), \mu_\alpha \to \mu \ .
 \end{align}
Since $(x_2,\frac{x_1-x_2}{\tau},(1-\alpha)x_2 + \alpha x_3) \# \bomega \in \Gamma(\gamma_\tau, \mu_\alpha)$, 
\begin{align*}
 E(\mu_\alpha) - E(\mu) &\geq \int \left\la (x_1-x_2)/\tau, ((1-\alpha)x_2 + \alpha x_3) - x_2 \right\ra d \bomega + o \left(\| x_2 - (1-\alpha) x_2 - \alpha x_3\|_{L^2(\bomega)}\right) \nonumber \\
  &= \alpha \int \left\la (x_1-x_2)/\tau, x_3-x_2 \right\ra d \bomega + \alpha o \left( \| x_2 - x_3\|_{L^2(\bomega)}\right) \ .
 \end{align*}
By definition of convexity along a generalized geodesic, 
$ E(\nu)- E(\mu) \geq \frac{1}{\alpha} \left[E(\mu_\alpha) - E(\mu) \right] +  (1- \alpha)\frac{\lambda}{2} \| x_2 - x_3\|_{L^2(\bomega)}$. 
Using the above inequality, we may bound this from below,
 \begin{align*}
 E(\nu)- E(\mu) &\geq \int \left\la (x_1-x_2)/\tau, x_3-x_2  \right\ra d \bomega + o \left(1 \right)+  (1- \alpha)(\lambda/2) \| x_2 - x_3\|_{L^2(\bomega)} \ .
\end{align*}
Sending $\alpha \to 0$,
\begin{align} \label{gen el pf 3}
 E(\nu)- E(\mu)  \geq \int \left\la (x_1-x_2)/\tau, x_3-x_2 \right\ra d \bomega + (\lambda/2) \| x_2 - x_3\|_{L^2(\bomega)} \ .
\end{align}
Likewise, we have
\begin{align*}
W_2^2(\nu, \omega) - W_2^2(\mu, \omega) = \int |x_1 - x_3|^2 d \bomega - \int |x_1 - x_2|^2 d \bomega
= \|x_2 - x_3\|_{L^2(\bomega)}^2 + 2 \int \la x_3 - x_2, x_2 - x_1 \ra d \bomega \ .
\end{align*}
Combining this with the fact that $\lambda + 1/\tau >0$,
\begin{align*}
\Phi(\tau, \omega; \nu) - \Phi(\tau, \omega; \mu) \geq \int \la (x_3 - x_1)/\tau +(x_1 - x_3)/\tau, x_2 - x_1 \ra d \bomega =0
\end{align*}
Since $\nu \in D(E)$ was arbitrary, $\mu$ minimizes $\Phi(\tau, \omega; \cdot)$.
\end{proof}

We now turn to the proof of Theorem \ref{proxmapdifftimethm}. For simplicity of notation, we abbreviate $J_\tau \mu$ by $\mu_\tau$.

\begin{thm} \label{proxmapdifftimethm general}
Suppose $E$ satisfies convexity assumption \ref{funas}. Then if $\mu \in \overline{D(E)}$ and $0 < h \leq \tau< \frac{1}{\lambda^-}$,
\[ J_\tau \mu = J_h \left[ \mu^{\mu_\tau \to \mu}_\frac{h}{\tau} \right]  \ , \]
where $\mu^{\mu_\tau \to \mu}_\frac{h}{\tau}$ is any geodesic from $\mu_\tau$ to
$\mu$ at time $\frac{h}{\tau}$.
\end{thm}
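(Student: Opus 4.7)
The plan is to apply the generalized Euler-Lagrange equation (Theorem \ref{genelthm}) in both directions, using the fact that geodesic interpolation on the plan level sends $\gamma_\tau$ to itself. Concretely, set $\mu_\tau = J_\tau \mu$ and apply the forward direction of Theorem \ref{genelthm} to obtain that, for every $\gamma \in \Gamma_0(\mu_\tau, \mu)$, the plan $\gamma_\tau = \rho_\tau \# \gamma$ satisfies
\begin{align*}
E(\nu) - E(\mu_\tau) \geq \int \la x_2, x_3 - x_1 \ra d\bgamma + o\bigl(\|x_1 - x_3\|_{L^2(\bgamma)}\bigr),\quad \forall \bgamma \in \Gamma(\gamma_\tau, \nu),\ \nu \to \mu_\tau.
\end{align*}

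Next I would fix a particular $\gamma \in \Gamma_0(\mu_\tau,\mu)$ realizing the geodesic, so that $\omega = T_{h/\tau} \# \gamma$ with $T_s(x_1,x_2) := (1-s)x_1 + s x_2$, and construct the candidate optimal plan from $\mu_\tau$ to $\omega$ by $\gamma' := (\pi^1, T_{h/\tau}) \# \gamma$. The marginals are correct by construction and the cost equals $(h/\tau)^2 W_2^2(\mu_\tau,\mu) = W_2^2(\mu_\tau,\omega)$, so $\gamma' \in \Gamma_0(\mu_\tau,\omega)$. The crucial algebraic identity is
\[
\rho_h \# \gamma' = \rho_\tau \# \gamma = \gamma_\tau,
\]
which follows from the direct computation $\rho_h\bigl(x_1,\, (1-h/\tau)x_1 + (h/\tau)x_2\bigr) = (x_1, (x_2 - x_1)/\tau)$. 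This is the plan-level incarnation of the identity $\id + h\bxi = \tfrac{\tau-h}{\tau}\id + \tfrac{h}{\tau}\bt_{\mu_\tau}^\mu$ used in the proof of Theorem \ref{proxmapdifftimethm}: moving for time $h$ along the velocity field recorded in $\gamma_\tau$ lands exactly on $\omega$.

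Since $\gamma'_h = \gamma_\tau$, the Euler-Lagrange inequality for the triple $(\mu_\tau, \omega, h)$ evaluated at the plan $\gamma'$ is \emph{word-for-word} the inequality already available for $(\mu_\tau, \mu, \tau)$ at $\gamma$. The converse direction of Theorem \ref{genelthm} then gives $\mu_\tau = J_h\omega = J_h\bigl[\mu^{\mu_\tau \to \mu}_{h/\tau}\bigr]$.

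The delicate point is that Theorem \ref{genelthm} quantifies the Euler-Lagrange condition over \emph{all} $\gamma \in \Gamma_0(\mu_\tau,\omega)$, whereas my argument verifies it only for the single plan $\gamma'$. I would handle this by inspecting the converse proof of Theorem \ref{genelthm}: for each test measure $\nu$ it suffices to run the argument with \emph{any one} optimal plan of the form $\pi^{2,1}\#\bomega$ where $\bomega$ is a three-plan with $\pi^{1,2}\#\bomega \in \Gamma_0(\omega,\mu_\tau)$ and $\pi^{1,3}\#\bomega \in \Gamma_0(\omega,\nu)$. I can force $\pi^{2,1}\#\bomega = \gamma'$ by prescribing $\pi^{1,2}\#\bomega = (\pi^2,\pi^1)\#\gamma'$ and gluing it, via disintegration with respect to the common $\omega$-marginal, with any element of $\Gamma_0(\omega,\nu)$ (which exists by the standard gluing lemma, \cite[Lemma 5.3.2]{AGS}). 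This lets the converse be applied using our single plan $\gamma'$, completing the argument.
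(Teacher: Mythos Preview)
Your overall strategy matches the paper's: apply the forward direction of Theorem \ref{genelthm} at $(\mu_\tau,\mu,\tau)$, build the interpolated plan $\gamma' = (\pi^1,T_{h/\tau})\#\gamma$, verify the algebraic identity $\rho_h\#\gamma' = \rho_\tau\#\gamma = \gamma_\tau$, and feed this into the converse of Theorem \ref{genelthm} at $(\mu_\tau,\omega,h)$. Your computations for $\gamma'\in\Gamma_0(\mu_\tau,\omega)$ and $\gamma'_h=\gamma_\tau$ are correct.

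The gap is in your handling of the ``delicate point.'' You assert that in the converse proof of Theorem \ref{genelthm} ``it suffices to run the argument with \emph{any one} optimal plan of the form $\pi^{2,1}\#\bomega$,'' and then propose to manufacture such a $\bomega$ by gluing $(\pi^2,\pi^1)\#\gamma'$ with an arbitrary element of $\Gamma_0(\omega,\nu)$. But the converse proof does \emph{not} work with an arbitrary three-plan: it requires that $E$ be $\lambda$-convex along the generalized geodesic induced by $\bomega$, and Assumption \ref{funas} only guarantees the \emph{existence} of such a $\bomega$, not that every three-plan with the right marginals has this property. Your glued $\bomega$ has no reason to be one along which $E$ is $\lambda$-convex, so the inequality ``$E(\nu)-E(\mu_\tau)\geq \frac{1}{\alpha}[E(\mu_\alpha)-E(\mu_\tau)] + (1-\alpha)\frac{\lambda}{2}\|x_2-x_3\|_{L^2(\bomega)}^2$'' from the converse proof is unavailable.

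The paper resolves this differently and more simply: since $\omega$ lies in the interior of a geodesic emanating from $\mu_\tau$, the optimal plan in $\Gamma_0(\mu_\tau,\omega)$ is \emph{unique} by \cite[Lemma 7.2.1]{AGS}. Hence whatever $\bomega$ the convexity assumption produces, its $\pi^{2,1}$-marginal is forced to equal $\gamma'$, and the Euler--Lagrange condition you have for $\gamma'$ is automatically the one needed. Replacing your gluing paragraph with this uniqueness observation closes the gap and recovers exactly the paper's proof.
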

\begin{proof}
Choose any geodesic $\mu_\alpha^{\mu_\tau \to \mu}$ from $\mu_\tau$ to $\mu$, and define $\omega = \mu^{\mu_\tau \to \mu}_\frac{h}{\tau}$. We must show $\mu_\tau = \omega_h$.

 By \cite[Lemma 7.2.1]{AGS}, there exists a unique plan $\gamma^{\mu_\tau \to \omega} \in \Gamma_0(\mu_\tau,\omega)$ and there exists $\gamma \in \Gamma_0(\mu_\tau, \mu)$ so that $\gamma^{\mu_\tau \to \omega} = (\frac{\tau -h}{\tau} \pi^{1,1} + \frac{h}{\tau} \pi^{1,2}) \#  \gamma$.
By Theorem \ref{genelthm}, for all $\gamma \in \Gamma_0(\mu_\tau,\mu)$,\begin{align}\label{gen el proof 0}
 E(\nu) - E(\mu_\tau) \geq \int \la x_2, x_3 - x_1 \ra d \bgamma + o\left( \|x_1 - x_3\|_{L^2(\bgamma)} \right)\quad  \forall  \bgamma \in \Gamma(\gamma_\tau, \nu) , \nu \to \mu_\tau \ .
 \end{align}
To prove $ \mu_\tau = \omega_h$, by a second application of Theorem \ref{genelthm}, it's enough to show that for all $\tilde{\gamma} \in \Gamma_0(\mu_\tau,\omega)$,
\begin{align} \label{gen el proof 1}
 E(\tilde{\nu}) - E(\mu_\tau) \geq \int \la x_2, x_3 - x_1 \ra d \tilde{\bgamma} + o\left( \|x_1 - x_3\|_{L^2(\tilde{\bgamma})} \right)\quad  \forall \tilde{\bgamma} \in \Gamma(\tilde{\gamma}_h, \tilde{\nu}), \tilde{\nu} \to \mu_\tau \ .
 \end{align}
Since $\gamma^{\mu_\tau \to \omega}$ is the unique plan in $\Gamma_0(\mu_\tau,\omega)$, it is enough to show that (\ref{gen el proof 1}) holds for $\tilde{\gamma} = \gamma^{\mu_\tau \to \omega}$.

As (\ref{gen el proof 0}) holds for all $\gamma \in \Gamma_0(\mu_\tau, \mu)$, in particular, it holds for $\gamma$ so that $\gamma^{\mu_\tau \to \omega} =(\frac{\tau -h}{\tau} \pi^{1,1} + \frac{h}{\tau} \pi^{1,2}) \#  \gamma$. Furthermore,
\begin{align*}
 \rho_h \left(x_1, \frac{\tau-h}{\tau} x_1 +\frac{h}{\tau} x_2 \right) = \left(x_1, \frac{1}{h} \left[\frac{\tau-h}{\tau} x_1 +\frac{h}{\tau} x_2 - x_1\right] \right)  = \left(x_1, \frac{x_2 - x_1}{\tau} \right) = \rho_\tau(x_1, x_2) \ .
\end{align*}
Consequently, $\tilde{\gamma}_h = \rho_h \# \gamma^{\mu_\tau \to \nu} = \rho_h \circ (\frac{\tau -h}{\tau} \pi^{1,1} + \frac{h}{\tau} \pi^{1,2}) \#  \gamma = \rho_\tau \# \gamma = \gamma_\tau$.
Therefore, the fact that  (\ref{gen el proof 0}) holds for $\gamma$ implies that (\ref{gen el proof 1}) holds for $\tilde{\gamma}$. This proves the result.\end{proof}
}

\begin{acknowledgement}
\noindent \textbf{Acknowledgements:} The author would like to thank Prof. Giuseppe Savar\'e for suggesting this problem. The author would like to thank Prof. Eric Carlen for suggesting the form of Theorem \ref{proxmapdifftimethm} and for many helpful conversations. The author would like to thank Prof. Wilfrid Gangbo for suggestions on the exposition of Section \ref{transport metric section}, particularly regarding the emphasis on the isometry between the transport metric and $L^2$.
\end{acknowledgement}

\bibliographystyle{plain}
\bibliography{ResearchProposalReferences}

\end{document}